\numberwithin{equation}{section}			
\newtheorem{theorem}{Theorem}
\newtheorem{proposition}{Proposition}
\newtheorem{lemma}{Lemma}
\newtheorem{corollary}{Corollary}
\newtheorem{definition}{Definition}
\newtheorem*{theorem*}{Theorem}
\newtheorem*{proposition*}{Proposition}
\newtheorem*{lemma*}{Lemma}
\newtheorem*{corollary*}{Corollary}
\newtheorem*{conjecture*}{Conjecture}
\newtheorem*{question*}{Question}
\newtheorem*{remark*}{Remark}
\newtheorem*{definition*}{Definition}
\renewcommand{\leq}{\leqslant}	
\renewcommand{\geq}{\geqslant}	
\newcommand{\N}{\mathbb{N}}		
\newcommand{\Z}{\mathbb{Z}}		
\newcommand{\R}{\mathbb{R}}		
\newcommand{\C}{\mathbb{C}}		
\newcommand{\F}{\mathbb{F}}		
\newcommand{\T}{\mathbb{T}}		
\newcommand{\U}{\mathbb{U}}		
\newcommand{\D}{\mathbb{D}}		
\newcommand{\eps}{\varepsilon}			
\newcommand{\ssum}{\textstyle\sum}		
\newcommand{\cj}{\overline}				
\DeclareMathOperator{\re}{Re}			
\newcommand{\wt}{\widetilde}		
\newcommand{\wh}{\widehat}			
\DeclareMathOperator{\Spec}{Spec}	
\DeclareMathOperator{\Supp}{Supp}	
\newcommand{\E}{\mathbb{E}}			
\newcommand{\ccap}{\textstyle\bigcap}	
\newcommand{\ccup}{\textstyle\bigcup}	
\newcommand{\dual}{\widehat{G}}				
\newcommand{\wf}{\widehat{f}}				
\newcommand{\wg}{\widehat{g}}				
\newcommand{\wmu}{\widehat{\mu}}			
\newcommand{\wone}{\widehat{1}}				
\newcommand{\cB}{\overline{B}}				
\newcommand{\hB}{\widehat{B}}				
\newcommand{\mB}{\mathcal{B}}				
\newcommand{\cmB}{\overline{\mathcal{B}}}	
\newcommand{\wmB}{\widetilde{\mathcal{B}}}	
\newcommand{\hmB}{\widehat{\mathcal{B}}}	
\newcommand{\dmB}{\dot{\mathcal{B}}}		
\newcommand{\mM}{\mathcal{M}}				
\newcommand{\hA}{\widehat{A}}				
\begin{document}

\title{Arithmetic progressions in sets of small doubling}

\author{Kevin Henriot}


\maketitle

\begin{abstract}
  We show that if a finite, large enough 
  subset $A$ of an arbitrary abelian group
  satisfies the small doubling condition
  $|A + A| \leq (\log |A|)^{1 - \eps} |A|$,
  then $A$ must contain a three-term arithmetic progression
  whose terms are not all equal,
  and $A + A$ must contain an 
  arithmetic progression or a coset of a subgroup, 
  either of which of size at least
  $\exp\!\big[ c (\log |A|)^{\delta} \big]$.
  This extends analogous results obtained by Sanders and, respectively, 
  by Croot, Laba and Sisask in the case where the group 
  is $\Z^s$ or $\F_q^n$.
\end{abstract}


\section{Introduction}
\label{sec:introduction}

Our aim in this work is to generalize two 
types of results of additive combinatorics usually 
stated for dense subsets of the integers, 
namely Roth's theorem~\cite{roth} 
and Bourgain's theorem
on long arithmetic progressions in 
sumsets~\cite{bourgainlongaps},
to the case where the sets only have small doubling 
and live in an arbitrary abelian group.
As in previous work of this 
nature~\cite{sanders3aps,ruzsamodelling,solymosi,stanchescu}, 
our motivation is to provide a link between two 
types of additive structure: small doubling on the one hand,
and containment of arithmetic progressions in the
set or its sumset on the other hand.
Since the result we seek is known qualitatively
by the modelling methods of Green and Ruzsa~\cite{greenruzsamodelling},
we focus on the quantitative bounds that may be obtained for it.

Concerning the first topic of Roth's theorem, 
we start by recalling the state-of-the-art bounds,
which we state in the setting of a cyclic group.
Here a $k$-term arithmetic progression 
in an abelian group is defined
as a tuple ${(x_1,\dots,x_k)}$, 
where $x_1,\dots,x_k$ are group elements such that
${x_2-x_1 = \dots = x_k-x_{k-1}}$, 
and we say that it is trivial when $x_1,\dots,x_k$
are all equal, and proper when they are all distinct;
note that when the group has odd order every
nontrivial three-term arithmetic progression is proper.
The breakthrough work of Sanders~\cite{sandersroth} then, 
building on earlier work of Bourgain~\cite{bourgainroth},
has established that given a large enough, odd integer $N$,
every subset of $\Z/N\Z$ of density at least $(\log N)^{-1+o(1)}$
contains a proper three-term arithmetic progression.
Under a density hypothesis, the generalization 
to finite abelian groups is not very challenging:
indeed it can be essentially read out of~\cite{sandersroth}
that any set of density at least $(\log |G|)^{-1+o(1)}$ 
in a finite abelian group $G$ of odd order 
contains a proper three-term arithmetic 
progression.

However, the situation is more complex
when we only assume that the set in question, say $A$,
has small doubling in the sense that $|A + A| \leq K |A|$.
Since subsets of density $\alpha$ of a 
finite abelian group
have doubling at most $K = \alpha^{-1}$, 
this includes the previous situation.
We would then like to show that 
${ K \leq (\log |A|)^{ 1 - o(1)} }$
forces $A$ to contain a proper
three-term arithmetic progression,
which would truely generalize the dense case, 
however this is not not obvious even 
in the case where $A$ is a set of integers.
Indeed the direct approach, which proceeds by combining 
the standard Ruzsa modelling lemma~\cite{ruzsamodelling}
with the bounds for Roth's theorem from~\cite{sandersroth},
only yields an admissible range of ${ K \leq (\log |A|)^{1/4-o(1)} }$.
This is precisely what led Sanders~\cite{sanders3aps} to design a
more subtle approach which, for sets of integers,
yields the range we seek.

\begin{theorem}[Sanders]
\label{thm:san_2xto3apsZ}
	There exists an absolute constant $c > 0$
	such that the following holds. 
	Suppose that $A$ is a finite set of integers such 
	that\footnote{Throughout this introduction,
	we make the tacit assumption that all quantities 
	appearing inside a double logarithm are
	at least $e^e$ in size.}
	\begin{align*}
		|A + A| \leq c (\log |A|) (\log\log |A|)^{-8} 
		\cdot |A|.
	\end{align*}
	Then $A$ contains a proper three-term arithmetic progression.
\end{theorem}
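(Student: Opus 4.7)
The plan is to adapt Sanders' Bohr-set density-increment strategy for Roth's theorem to the small-doubling setting, fusing the modelling step with the iteration itself; the naive route of \emph{first} modelling into $\Z/N\Z$ and \emph{then} quoting cyclic Roth as a black box loses a fourth power and only absorbs $K \leq (\log |A|)^{1/4 - o(1)}$, so one has to open up the iteration.

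Suppose for contradiction that $A \subseteq \Z$ satisfies the doubling hypothesis with $K = c(\log|A|)(\log\log|A|)^{-8}$ and contains no proper three-term progression. Since $\Z$ is torsion-free, $A$ then contains only constant 3-APs. First I would apply Pl\"unnecke--Ruzsa to obtain $|2A - 2A| \leq K^{4} |A|$, and then Ruzsa modelling to produce a Freiman $8$-isomorphism $\phi$ from some $A' \subseteq A$ with $|A'| \geq |A|/2$ onto a set $\wA \subseteq \Z/N\Z$ with $N \leq 8 K^{4} |A|$, so that 3-APs in $\wA$ correspond to 3-APs in $A'$. The image $\wA$ has density $\alpha \geq (16 K^{4})^{-1}$ in $\Z/N\Z$; the task is therefore to find a non-trivial 3-AP in $\wA$.

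Now, instead of asking cyclic Roth for $\wA$ at density $\alpha$ alone, I would exploit that $\wA$ is not merely dense but has doubling $O(K)$. Sanders' quantitative Bogolyubov--Chang theorem supplies a regular Bohr set $B \subseteq \Z/N\Z$ of rank $d \leq \log^{O(1)} K$ and width $\rho \geq (\log K)^{-O(1)}$ with $B \subseteq 2\wA - 2\wA$, so that the density-increment iteration can be \emph{initialised} on a structured Bohr set of controlled rank rather than on the whole group; this is precisely where the naive argument discards the small-doubling input. From there I would run the Bourgain--Sanders loop on regular Bohr sets: under the assumption of no non-trivial 3-AP in $\wA \cap (B+t)$, Fourier analysis on $B$, combined with the Chang/Croot--Sisask control of the $1/4$-spectrum, produces a refinement $B'$ of rank $d + O(\log^{2} \alpha^{-1})$ and width $\rho \cdot \exp\!\big(-\log^{O(1)} \alpha^{-1}\big)$ on which a translate of $\wA$ enjoys a density increment of order $\alpha/(\log\alpha^{-1})^{O(1)}$. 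After $O(\log \alpha^{-1} (\log\log \alpha^{-1})^{O(1)})$ refinements the density surpasses $1$.

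The iteration succeeds provided the final Bohr set is nonempty, which unfolds into a condition of the form $\log^{O(1)} K \cdot (\log\log|A|)^{O(1)} \leq \log|A|$, matching the hypothesis on $K$; the specific exponent $-8$ comes from tallying all the polylogarithmic losses in the Chang/Croot--Sisask spectral step and in the number of iterations. The main obstacle is not any single step but the careful marriage of the two ingredients: ensuring that the rank inherited from the initial Bogolyubov--Chang step (depending on $K$) does not compound catastrophically with the $\mathrm{poly}(\log \alpha^{-1})$ rank growth per iteration, and that the refined Bohr set stays Freiman $8$-isomorphic to a genuine integer set throughout, so that 3-APs produced in the model are honest 3-APs in $A$ rather than wrap-around artefacts.
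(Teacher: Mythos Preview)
Your overall architecture --- Ruzsa modelling into $\Z/N\Z$, a Bogolyubov-type step to produce a low-rank Bohr set, and then the Bourgain--Sanders density-increment iteration initialised on that Bohr set rather than on the whole group --- is exactly the route the paper describes in Section~\ref{sec:remarks}. There is, however, a genuine gap in your second step. You invoke a \emph{containment} conclusion $B \subseteq 2\wA - 2\wA$; this by itself says nothing about the density of $\wA$ on any translate of $B$, so the density-increment loop has no datum to iterate on. What is actually needed (and what the paper uses) is the \emph{correlation} form,~\cite[Proposition~6.1]{sanders3aps}, which produces a regular Bohr set $B$ together with a translate on which $\wA$ has relative density $\gg K^{-1}$; that is the true initialisation.

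Two smaller corrections. First, the parameters you quote (rank $\leq (\log K)^{O(1)}$) belong to Sanders' later Bogolyubov--Ruzsa lemma~\cite{sandersBR}, not to the Bogolyubov--Chang input of~\cite{sanders3aps}, which only delivers rank $\ll K\log K$ and density $\geq \exp[-CK(\log K)^2]$. Feeding those parameters (with $\alpha \asymp K^{-1}$, $d \ll K\log K$) into Proposition~\ref{thm:roth_localroth} yields $\geq \exp[-CK(\log K)^8]\,|A|^2$ three-term progressions, which is where the exponent $-8$ comes from; your $(\log K)^{O(1)}$ parameters would instead reproduce the stronger Theorem~\ref{thm:me_2xto3aps}. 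Second, the concern about maintaining Freiman $8$-isomorphy ``throughout'' the iteration is misplaced: once modelled, the entire loop runs inside $\Z/N\Z$, and the isomorphy is invoked exactly once at the end to pull the $3$-AP back to $A$.
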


This does not appear explicitely in the literature, 
but follows more or less directly from inserting 
Ruzsa's modelling bound~\cite{ruzsamodelling}
into the argument of~\cite{sanders3aps},
taking also into account the latest bounds
for Roth's theorem~\cite{sandersroth};
we describe this in more detail at the end of the article.
By this procedure, one can actually obtain a version of 
Theorem~\ref{thm:san_2xto3apsZ} for any group with 
good modelling in the sense of~\cite{greenruzsamodelling}.
In the general abelian case, where available modelling arguments
are by necessity much weaker~\cite{greenruzsamodelling}, 
Sanders~\cite{sanders3aps} also improves substantially
on the bounds that would follow from a direct modelling approach.

\begin{theorem}[Sanders]
\label{thm:san_2xto3aps}
	There exists an absolute constant $c > 0$
	such that the following holds.
	Suppose that $A$ is a finite subset 
	of an abelian group such that
	\begin{align*}
		|A + A| \leq c ( \log |A| )^{1/3} (\log\log |A|)^{-1} 
		\cdot |A|.
	\end{align*}
	Then $A$ contains a nontrivial 
	three-term arithmetic progression.
\end{theorem}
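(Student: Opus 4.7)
My plan is to reduce the theorem to the finite abelian group case via a Freiman $8$-isomorphism and then invoke Sanders's quantitative Roth theorem on the model. The choice $s=8$ guarantees that three-term progressions are preserved in both directions, so any such progression found in the model lifts back to $A$; the key parameter to track is the ratio $|G'|/|A|$, which determines the density in the model and hence the strength of the hypothesis on $K$ obtainable from Roth's theorem.

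First, using the Pl\"unnecke--Ruzsa inequalities to control $|kA-\ell A|$ in terms of $K$, I would construct the Freiman $8$-isomorphism $\phi$ from a substantial subset $A' \subseteq A$, with $|A'| \geq |A|/2$ say, onto a subset of a finite abelian group $G'$. In the integer case one takes $G' = \Z/N\Z$ for a suitable prime $N \leq K|A|$, giving density $\geq (2K)^{-1}$; in the general abelian case the construction must factor through a coset progression containing $A$, producing a $G'$ with $|G'| \leq K^C |A|$ for an absolute exponent $C$ whose value governs the final exponent of $\log|A|$ in the theorem.

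Second, I would apply the quantitative Roth theorem of Sanders on $G'$: provided $\phi(A')$ has density at least $(\log|G'|)^{-1}(\log\log|G'|)^{O(1)}$ in $G'$, and $G'$ has odd order (by choosing the intermediate coset progression to avoid two-torsion, a step requiring care in the general abelian setting), one obtains a proper three-term progression in $\phi(A')$, which lifts through $\phi$ to a nontrivial three-term progression in $A' \subseteq A$. With $|G'| \leq K^C|A|$ and density $\geq (2K^C)^{-1}$, the required density condition translates to $K^C \leq c \log|A| \cdot (\log\log|A|)^{-O(1)}$, matching the hypothesis precisely when $C = 3$.

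The principal obstacle is obtaining the modelling with exponent $C = 3$. A direct application of Green--Ruzsa modelling passes through a coset progression of rank $K^{O(1)}$ and size $\exp((\log K)^{O(1)})|A|$, yielding a model of cardinality superpolynomial in $K$ and hence a far weaker admissible range. To recover $C = 3$ one must follow Sanders and use a Bogolyubov-type argument on $2A-2A$ to locate a Bohr set of rank $O(\log K)$ and radius polynomial in $K^{-1}$, inside which a more efficient Freiman $8$-isomorphism can be defined. Balancing the dimension, radius, and density losses in this construction is the crux of the argument and accounts for both the cubic exponent and the $(\log\log|A|)^{-1}$ correction in the hypothesis.
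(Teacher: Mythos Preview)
Your proposal has a genuine gap. The strategy ``model $A$ into a finite group $G'$ with $|G'|\leq K^C|A|$ and density $\geq K^{-C}$, then apply Sanders's global Roth theorem'' would indeed give the exponent $1/C$, but no such polynomial-loss modelling is available for general abelian groups. Green--Ruzsa modelling only achieves density $\exp[-CK^2\log K]$, as the paper notes explicitly, and this is essentially sharp by a construction of theirs. Your suggested workaround --- run Bogolyubov on $2A-2A$ to find a Bohr set, then build a Freiman $8$-isomorphism \emph{inside} that Bohr set --- does not work: a Bohr set is not a group, and there is no mechanism to convert ``$A$ sits densely in a Bohr set of rank $r$'' into ``$A$ is Freiman-isomorphic to a subset of density $K^{-O(1)}$ in a finite group''. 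That conversion is precisely the content of the polynomial Freiman--Ruzsa conjecture.

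Sanders's actual proof (and the improvement in this paper) avoids this obstruction by never reducing to a global Roth theorem. Instead, one first weak-models into a finite abelian group via Green--Ruzsa, then applies a Bogolyubov--Chang lemma to locate a translate of $A$ with density $\gg K^{-1}$ inside a regular Bourgain system $\mB$, and finally runs a \emph{local} Roth argument directly on $\mB$ --- i.e., the entire density-increment iteration is carried out relative to Bourgain systems rather than subgroups. The exponent $1/3$ in Sanders's original paper arises from the quantitative shape of that local Roth theorem combined with the parameters of the Bogolyubov--Chang step, not from a modelling exponent $C=3$. Separately, your plan to ``choose the intermediate coset progression to avoid two-torsion'' and thereby obtain a \emph{proper} progression is not available: the theorem only asserts a nontrivial progression, and the paper remarks that upgrading to proper progressions in the presence of $2$-torsion is a genuinely unresolved difficulty.
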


Note that the conclusion changed to
yield a nontrivial arithmetic progression only;
we say more on this later. 
The loss in the exponent of $\log |A|$ in comparison 
with the previous case is due to a limitation 
of the results on modelling; indeed
via \cite{greenruzsamodelling} it is only possible 
to Freiman-embed a set $A$ of doubling $K$ into
a finite abelian group where its image has density
$\exp[ -CK^2\log K ]$.
A construction by Green and Ruzsa~\cite{greenruzsamodelling} 
further shows that any modelling result of this type
will feature an exponential loss in $\sqrt{K}$, 
at least if we insist on embedding the whole set.
Fortunately, in a recent major advance 
on the polynomial Freiman-Ruzsa conjecture,
Sanders~\cite{sandersBR} managed to sidestep this issue 
and obtained a correlation result which
may be viewed as another form of modelling.
This result may be applied to our situation
to recover a range of doubling matching
the current bounds for Roth's theorem,
for arbitrary abelian groups;
this is the first observation of this paper.

\begin{theorem}
\label{thm:me_2xto3aps}
	There exists an absolute constant $c > 0$ 
	such that the following holds.
	Suppose that $A$ is a finite subset 
	of an abelian group such that
	\begin{align*}
		|A + A| \leq 
		c ( \log |A| ) ( \log\log |A| )^{-7} 
		\cdot |A|.
	\end{align*}
	Then $A$ contains a nontrivial 
	three-term arithmetic progression.
\end{theorem}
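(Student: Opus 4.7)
The plan is to strengthen the modelling step in Sanders' proof of Theorem~\ref{thm:san_2xto3aps} by replacing the classical Ruzsa modelling lemma with the finer correlation-type modelling that emerges from Sanders' work~\cite{sandersBR} on the Bogolyubov-Ruzsa conjecture, and then to run essentially the argument behind Theorem~\ref{thm:san_2xto3apsZ} inside the resulting model.

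First, given a finite set $A$ in an arbitrary abelian group with doubling $K \leq c (\log |A|)(\log \log |A|)^{-7}$, I would apply Sanders' modelling/correlation result from \cite{sandersBR} to produce a finite abelian group $H$, whose order is at most polynomial in $|A|$, together with a Freiman $s$-isomorphism (for some fixed $s \geq 2$) from a large subset $A' \subseteq A$ onto a set $B \subseteq H$ of density at least $\exp[-(\log K)^{O(1)}]$ in $H$. The crucial gain is that this density loss is only polylogarithmic in $\log |A|$, as opposed to the exponential-in-$\sqrt{K}$ loss that any Freiman modelling of the full set $A$ must incur by the Green-Ruzsa construction, and this is exactly what buys the leap from $(\log |A|)^{1/3}$ to $\log |A|$ in the doubling hypothesis.

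Next, after passing to an odd-order subgroup or quotient of $H$ if needed --- or simply working directly in $H$, since we only need a nontrivial progression rather than a proper one --- I would invoke Sanders' quantitative Roth bound~\cite{sandersroth} in $H$, at the density threshold $(\log |H|)^{-1}(\log \log |H|)^{8}$ that underlies Theorem~\ref{thm:san_2xto3apsZ}. For a sufficiently small constant $c$ in the hypothesis, the density of $B$ exceeds this threshold, yielding a three-term arithmetic progression $(b, b+d, b+2d)$ in $B$ with $d \neq 0$. Pulling this progression back through the Freiman $s$-isomorphism, which for $s \geq 2$ preserves both the identity $x + z = 2y$ and the property $x \neq z$, produces a nontrivial three-term arithmetic progression inside $A' \subseteq A$.

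The main obstacle is the quantitative bookkeeping: the combined polylogarithmic losses from Sanders' modelling, from the Pl\"unnecke-Ruzsa covering estimates used to upgrade a correlation statement into a genuine Freiman $s$-isomorphism on a positive-density subset, and from the $(\log \log)^{8}$ factor inherent to Sanders' Roth bound, must all be absorbed into the $(\log \log |A|)^{-7}$ slack of the hypothesis on $K$. A secondary technical point is the conversion of the output of \cite{sandersBR}, which is naturally phrased in terms of additive energies or convolution estimates, into a genuine Freiman isomorphism of order at least two on a subset of $A$ that is still large enough for the Roth threshold to apply in the compressed model; this is a standard but careful pigeonholing argument that must be carried out without spoiling the matching of logarithmic exponents.
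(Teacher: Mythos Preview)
Your approach has a genuine quantitative gap at the modelling step. Sanders' Bogolyubov--Ruzsa lemma does not give a Freiman isomorphism into a finite abelian group; it gives a translate of $A$ with density $\geq 1/2K$ inside a coset progression $M$ of dimension $d \leq C(\log K)^6$. To convert this into a Freiman $2$-isomorphism onto a subset of an honest group $H$, you must embed the box underlying $M$ into a product $\prod_i \Z/N_i\Z \times H'$, and this forces $|H| \geq C^d |M|$. The density of your image set in $H$ is therefore at most
\[
\frac{1}{2K}\cdot C^{-d} \;\leq\; \exp\big[-c(\log K)^6\big],
\]
so the exponent $O(1)$ in your claimed density $\exp[-(\log K)^{O(1)}]$ is at least $6$, not $1$. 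With $K \asymp (\log|A|)(\log\log|A|)^{-7}$ this density is $\exp[-c(\log\log|A|)^6]$, which is far below the Roth threshold $(\log|H|)^{-1+o(1)} \asymp (\log|A|)^{-1+o(1)}$. Your ``polylogarithmic losses'' are in fact quasi-polynomial in $\log|A|$ and cannot be absorbed into the $(\log\log|A|)^{-7}$ slack; the resulting admissible range would be only $K \leq \exp[(\log\log|A|)^{1/6}]$, weaker even than Theorem~\ref{thm:san_2xto3aps}.

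The paper sidesteps this loss entirely by \emph{not} embedding $M$ into a group. Instead it treats the coset progression as a Bourgain system and proves a local Roth theorem (Proposition~\ref{thm:roth_localroth}) directly in that setting, giving a $3$-AP count of size $\exp[-C(\alpha^{-1}+d)\,\ell(\alpha)^6\,\ell(\alpha/d)]\cdot b^2$. The crucial point is that the dimension $d$ enters this bound \emph{additively} with $\alpha^{-1}=2K$, so the term $d=(\log K)^6$ is harmlessly dominated by $K$ and the final count is $\exp[-CK(\log K)^7]|A|^2$. Developing this local Roth theorem for Bourgain systems---carrying Sanders' density-increment machinery from Bohr sets over to that generality---is the real technical content of the paper, and it is precisely what your proposed Freiman-model-then-global-Roth route cannot replace.
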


Here we say more on the issue of $2$-torsion,
which was already discussed by Sanders in~\cite{sanders3aps}. 
In general, a set $A$ contains a nontrivial degenerate 
arithmetic progression $(x,y,x)$ if and only if
$A - A$ contains an element of order $2$;
therefore in that case,
Theorems~\ref{thm:san_2xto3aps}~and~\ref{thm:me_2xto3aps} 
give only trivial information.
Obtaining proper progressions in every case
where it is possible (this excludes groups such as $\F_2^n$)
is a thorny issue that has only been successfully adressed
in work of Lev~\cite{lev} and Sanders~\cite{sandersZ4}
in cases where the group rank is not too large; 
here we do not consider this issue.

The second topic we consider is that of 
long arithmetic progressions in sumsets,
initiated by Bourgain~\cite{bourgainlongaps} 
and further developed by Green~\cite{greenapsumsets}.
Basing themselves on a fundamental new technique 
introduced by Croot and Sisask~\cite{CS}, 
these two last authors together with Laba \cite{CLS} 
obtained a remarkable extension of Green's result,
which furthermore already works under a 
small doubling hypothesis.

\begin{theorem}[Croot, Laba, Sisask]
\label{thm:CLS_2xtolongaps}
	There exists an absolute constant $c > 0$ 
	such that the following holds.
	Let $K,L \geq 1$ be parameters,
	and suppose that $A,B$ are finite sets of integers
	such that ${|A + B| \leq K|A|}$ and ${|A + B| \leq L|B|}$.
	Then $A + B$ contains an arithmetic progression of
	length at least
	\begin{align*}
		\exp\bigg[ c \bigg( \frac{\log |A+B|}{K (\log L)^3} \bigg)^{\!1/2} \, \bigg]
		\quad\text{provided}\quad
		K \log^5 ( L \log |A|) \leq c \log |A+B|.
	\end{align*}
\end{theorem}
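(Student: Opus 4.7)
The plan is to combine Croot--Sisask almost periodicity with a Bogolyubov--Chang-type structural argument, extracting a long AP inside $A + B$ from the almost-translation-symmetries of the convolution $f := 1_A * 1_B$.

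The first step is to apply Croot--Sisask to $f$ on $\Z$ with base set $B$ and parameters $p, \eps$ to be optimized. Since Pl\"{u}nnecke--Ruzsa applied to $|A+B| \leq L|B|$ yields $|kB - kB| \leq L^{O(k)}|B|$, this produces a symmetric set $T \subseteq B - B$ containing $0$, of size $|T| \geq L^{-Cp/\eps^2}|B|$, whose elements $t$ satisfy $\|\tau_t f - f\|_{\ell^p} \leq \eps \|f\|_{\ell^p}$; by the triangle inequality, any $\ell$-fold sum of elements of $T$ is an almost-period of $f$ with error at most $\ell \eps \|f\|_{\ell^p}$.

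The second step converts this almost periodicity into combinatorial information on $A+B$. Consider the super-level set $S := \{a : f(a) \geq \tau\}$ for $\tau \asymp |A||B|/|A+B|$, a positive-density subset of $A+B$. If $a \in S$ and $a + s \notin A+B$, then $f(a+s) = 0$ while $f(a) \geq \tau$, so $|f(a+s) - f(a)|^p \geq \tau^p$ and Markov's inequality gives
\begin{align*}
	|\{a \in S : a + s \notin A+B\}| \leq (\ell \eps \|f\|_{\ell^p}/\tau)^p
\end{align*}
for any $\ell$-fold sum $s$ from $T$. To force a length-$m$ AP $\{a, a+d, \ldots, a+(m-1)d\}$ into $A+B$ via a union bound, it thus suffices to find an AP $\{jd : 0 \leq j \leq m-1\}$ inside a controlled sumset of $T$ whose total bad count stays below $|S|$.

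The third step is the extraction of such an AP. Since $T$ inherits doubling $L^{O(1)}$ from $B$, a Bogolyubov--Chang argument produces a Bohr set of dimension $d \lesssim (\log L)^{O(1)}$ inside an iterated difference set of $T$, from which an AP of length $\exp(\log|T|/d)$ can be extracted by standard arguments. Balancing the Croot--Sisask constraint $p/\eps^2 \lesssim \log|B|/\log L$ against the iteration loss $\ell \eps$ and the Bohr dimension, with $p \asymp \log|A+B|$ chosen so that the $\ell^p$ estimate mimics $\ell^\infty$, yields an AP of length $\exp[c (\log|A+B|/(K(\log L)^3))^{1/2}]$; the square-root exponent is forced by the quadratic $\eps^{-2}$ dependence in Croot--Sisask. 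The side condition $K\log^5(L\log|A|) \leq c\log|A+B|$ is precisely what guarantees that these estimates are simultaneously non-vacuous, in particular that $|T| \geq 2$ and that the Bohr extraction succeeds in the regime of interest.

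The main obstacle I anticipate is the precise tracking of $\log L$ factors through Pl\"{u}nnecke--Ruzsa, Croot--Sisask, the bound on $\|f\|_{\ell^p}/\tau$ (which must be controlled essentially by $K$ rather than $L$), and Chang's spectral lemma for the Bohr dimension: each step contributes $\log L$ factors that must combine correctly to produce the sharp $(\log L)^3$ exponent in the denominator. A secondary subtlety is that the Markov estimate requires a pointwise lower bound on $f$ restricted to $S$, which is produced from $\sum_a f(a) = |A||B|$ together with the trivial bound $f \leq \min(|A|, |B|)$.
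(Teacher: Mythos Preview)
This theorem is not proved in the paper: it is quoted from Croot--Laba--Sisask \cite{CLS} as background, and the paper's own contribution is the symmetric abelian-group analogue (Theorem~\ref{thm:me_2xtolongaps}), whose proof in Section~\ref{sec:long} is the nearest point of comparison. At that level your overall architecture is correct and matches the paper's: Croot--Sisask almost-periodicity, then a Chang-type spectral step to obtain a structured set of almost-periods, then a packing argument to locate a progression in $\Supp(1_A\ast\mu_A)$. Two differences are worth noting. First, the paper replaces your super-level-set Markov argument by the cleaner Lemma~\ref{thm:long_packing}: if $\|(I-\tau_t)f\|_p\leq\tfrac12\|f\|_p$ for all $t$ in a set $R$, then any $T\subset R$ with $|T|<2^p$ embeds in $\Supp(f)$; this avoids tracking the threshold $\tau$ entirely. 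Second, and more substantively, the paper does not run Bogolyubov--Chang on the almost-period set $T$; instead it takes the Croot--Sisask output $X$ and annihilates $\Spec_{1/2}(\mu_X)$ via the local Chang bound (Proposition~\ref{thm:local_specann}), producing a Bourgain system of almost-periods directly.

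There are two genuine gaps in your sketch. The assertion that ``$T$ inherits doubling $L^{O(1)}$ from $B$'' is false: $T$ is merely a subset of $B-B$ of relative density $\tau\geq L^{-Cp/\eps^2}$, with no doubling control of its own. What Chang (or Bogolyubov--Chang after Ruzsa modelling) actually gives is a Bohr set of dimension $\asymp\log(1/\tau)\asymp (p/\eps^2)\log L$, not $(\log L)^{O(1)}$; this is exactly the $m\leq CpK(\log K)^3$ appearing in Proposition~\ref{thm:long_almostp}. Relatedly, your choice $p\asymp\log|A+B|$ is far too large: with that $p$ the density $\tau$ collapses and no structure survives. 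The correct balance, forced by requiring both $|T|<2^p$ in the packing step and a nontrivial Bohr/Bourgain dimension, is $p\asymp\big(\log|A+B|/(K(\log L)^3)\big)^{1/2}$, as the paper takes at the end of Section~\ref{sec:long}; this is what produces the square-root exponent you correctly anticipate.
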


From the methods of~\cite{CLS}, one can easily deduce
that an analog result holds for subsets $A$ and $B$
of density $\alpha$ and $\beta$ of a finite abelian group,
with $\alpha^{-1}$ and $\beta^{-1}$
in place of $K$ and $L$.
Therefore we focus again on the case of small doubling
in an arbitrary abelian group,
to which the argument of~\cite{CLS} does not
extend as it relies on a two-sets version of 
Ruzsa modelling~\cite{ruzsamodelling}.
The coveted generalization of Theorem~\ref{thm:CLS_2xtolongaps} 
may however be recovered, 
again by using the Bogolyubov-Ruzsa lemma from~\cite{sandersBR},
and establishing this is the second aim of this paper.
Note that in the general abelian setting, we need to 
adapt the type of structure sought to allow for
both cosets of subgroups and arithmetic progressions.

\begin{theorem}
\label{thm:me_2xtolongaps}
	There exists an absolute constant $c > 0$
	such that the following holds.
	Let $K \geq 1$ be a parameter and
	suppose that $A$ is a finite subset of an abelian group
	such that $|A + A| \leq K|A|$.
	Then $A + A$ contains a set, which is either a proper
	arithmetic progression or a coset of a subgroup,
	of size at least
	\begin{align*}
		\exp\bigg[ c\Big( \frac{\log |A|}{K (\log K)^3} \Big)^{1/2} \bigg]
		\quad\text{provided}\quad
		K \leq \frac{c\log |A|}{(\log\log |A|)^5}.
	\end{align*}
\end{theorem}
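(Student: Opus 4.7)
The plan is to transplant the argument of Croot, Laba, and Sisask~\cite{CLS} establishing Theorem~\ref{thm:CLS_2xtolongaps} into the arbitrary abelian setting, substituting Sanders' Bogolyubov-Ruzsa lemma~\cite{sandersBR} for the Ruzsa modelling step used in~\cite{CLS}. This parallels the strategy behind Theorem~\ref{thm:me_2xto3aps}, and is natural given that, after specializing $B=A$, $L=K$ and comparing $\log|A+A|$ with $\log|A|$ via Pl\"unnecke, the quantitative conclusion we seek matches Theorem~\ref{thm:CLS_2xtolongaps} exactly.

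The core analytic step is to apply the Croot-Sisask $L^p$-almost-periodicity lemma to the convolution $1_A * 1_A$, with $p \asymp \log K$ and $\eps$ a small absolute constant, producing a symmetric set $T$ of $L^p$-approximate periods of $1_A * 1_A$ satisfying a small-doubling condition, with $|T|$ of the form $\exp[-K (\log K)^{O(1)}] \cdot |A|$. Since $1_A * 1_A$ attains a value of size at least $|A|/K$ somewhere in $A+A$ by Pl\"unnecke, a Markov-type transfer at such a point $x_0$ converts the $L^p$-periodicity into a pointwise containment: there exists a subset $T' \subseteq T$ of relative density $1 - o(1)$ with $x_0 + T' \subseteq A + A$.

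In the integer setting of~\cite{CLS} the proof concludes by Freiman-modelling $T'$ into a cyclic group and extracting a long AP by Pl\"unnecke-type counting. In our setting I would instead invoke Sanders' Bogolyubov-Ruzsa lemma~\cite{sandersBR} on $T$ to produce a proper coset progression $H + P$ of rank $r = \log^{O(1)} K$ and total size a fraction $\exp[-\log^{O(1)} K]$ of $|T|$, contained inside $T - T$. A covering argument combining this with $x_0 + T' \subseteq A + A$ would place a translate $x_1 + H + P' \subseteq A+A$ for some subprogression $P'$ of $P$ of comparable size. A standard dimension reduction on a proper coset progression of rank $r$ and total size $M$ extracts either a coset of the subgroup $H$ (if $|H|$ is the dominant factor) or a proper one-dimensional arithmetic progression of length at least $M^{1/r}$; optimizing yields the desired $\exp[c(\log|A|/(K \log^3 K))^{1/2}]$.

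The main obstacle is quantitative. The $\sqrt{K}$ appearing in the final exponent forces $p$ in the Croot-Sisask step to grow like $\log K$ (rather than being an absolute constant), which complicates the transfer from $L^p$ to pointwise control and forces a delicate tradeoff between $\eps$, $p$, and the target length. The hypothesis $K \leq c\log|A|/(\log\log|A|)^5$ appears to be exactly what is needed to absorb the combined polylogarithmic losses from the Croot-Sisask step and from Sanders' Bogolyubov-Ruzsa lemma; verifying this match requires careful parameter bookkeeping but introduces no conceptual input beyond the ingredients of~\cite{CLS} and~\cite{sandersBR}.
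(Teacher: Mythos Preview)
Your plan gets the architecture of the argument wrong in two linked ways, and the gap is not just bookkeeping.

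First, you apply the Bogolyubov--Ruzsa lemma to the Croot--Sisask set $T$ of periods rather than to $A$. The containment form of Sanders' lemma places a coset progression inside $2T-2T$, not inside $T$; the correlation form only gives a translate of $T$ with density $\gg 1/K_T$ in a coset progression. Neither yields a long arithmetic progression or coset actually sitting inside the set $T'$ that you have placed in $A+A$ via your Markov step, and the unspecified ``covering argument'' does not bridge this: a density-$\delta$ subset of a rank-$r$ coset progression need not contain any long progression. Moreover the doubling $K_T$ of $T$ is of order $K\cdot|A|/|T| = \exp\!\big[K(\log K)^{O(1)}\big]$, so $(\log K_T)^6$ is already polynomial in $K$, which would annihilate the final exponent even if the structural step could be salvaged.

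Second, and relatedly, your sketch omits the Fourier step that does the real work in both \cite{CLS} and the present paper. The Croot--Sisask output $X$ is itself unstructured; what one does is annihilate its large spectrum via a (local) Chang-type lemma, obtaining a Bourgain system $\cmB$ \emph{all of whose elements} are $L^p$-almost-periods of $1_A\ast\mu_A$. Because $\cmB$ is a Bourgain system (not merely a set of small doubling), one can directly extract from it a proper progression or subgroup $T$ with $|T| < 2^p$, and then the packing argument places a translate of $T$ inside $A+A$. The r\^ole of Sanders' Bogolyubov--Ruzsa lemma is \emph{upstream}: it is applied to $A$ to produce the coset progression $M\subset 2A-2A$ which serves as the host Bourgain system for localizing both the Croot--Sisask lemma and the spectral analysis. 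With that order the dimension of $\cmB$ is $O\!\big(pK(\log K)^3\big)$, and choosing $p\asymp\big(\log|A|/(K\log^3 K)\big)^{1/2}$ (not $p\asymp\log K$) balances the packing constraint $|T|<2^p$ against the size of the extracted progression, yielding the stated bound.
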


This recovers Theorem~\ref{thm:CLS_2xtolongaps}
in the symmetric case $A = B$, since
in $\Z$ every nontrivial subgroup is infinite.
We restrict to the symmetric case for simplicity;
it seems feasible to obtain an asymmetric result
of the shape of Theorem~\ref{thm:CLS_2xtolongaps}
from the methods of this paper, however we do not
pursue this here.

Finally, we mention an application of results on
arithmetic progressions in sets of small doubling, 
to the asymptotic size of restricted sumsets.
This application was first observed independently by 
Schoen~\cite{schoen} and Hegyvári et al.~\cite{hegyvarietal} 
in the setting of integers, and later quantitatively 
strengthened by Sanders~\cite{sanders3aps}
in the more general setting of abelian groups.
We write $A \,\wh{+}\, A$ for the set of sums of 
distinct elements of $A$ below.

\begin{corollary}
\label{thm:me_restrictedss}
	Suppose that $A$ is a finite nonempty subset 
	of an abelian group. 
	Then
	\begin{align*}
		|A \,\wh{+}\, A| \geq \big( 1 - (\log |A|)^{-1+o(1)} \big) |A+A|.
	\end{align*}
\end{corollary}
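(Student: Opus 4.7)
The plan is to reduce the corollary to Theorem~\ref{thm:me_2xto3aps} by isolating a subset $B' \subseteq A$ of size $|A+A| - |A \,\wh{+}\, A|$ which is free of nontrivial three-term arithmetic progressions. Write $E := (A+A) \setminus (A \,\wh{+}\, A)$; by definition, each $e \in E$ admits at least one representation $e = 2a$ with $a \in A$, and every representation $e = a_1 + a_2$ with $a_1, a_2 \in A$ has $a_1 = a_2$. I select one such preimage $a_e \in A$ for each $e \in E$ and set $B' := \{a_e : e \in E\}$, so that $e \mapsto a_e$ is a bijection $E \to B'$ and $|B'| = |E|$.

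The key claim is that $B'$ contains no nontrivial three-term arithmetic progression. Suppose for contradiction that $(b_1, b_2, b_3)$ is such a progression in $B'$, with $b_i = a_{e_i}$, so that $e_2 = 2 b_2 = b_1 + b_3$. If $b_1 \neq b_3$, then this expresses $e_2$ as a sum of two distinct elements of $A$, contradicting $e_2 \in E$. Otherwise $b_1 = b_3$, hence $e_1 = 2 b_1 = 2 b_3 = 2 b_2 = e_2$, and the bijectivity of $e \mapsto a_e$ forces $b_1 = b_2 = b_3$, contradicting nontriviality. This last case is what makes the choice of $B'$ delicate, and is the main obstacle the argument has to navigate: the cruder subset $\{a \in A : 2a \in E\}$ would admit degenerate progressions $(b, b+t, b)$ with $t$ of order $2$, against which Theorem~\ref{thm:me_2xto3aps} offers no contradiction.

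With the claim established, the conclusion is routine. Since $B' \subseteq A$ we have $|B' + B'| \leq |A+A|$. If $|B'|$ is bounded by an absolute constant $C_0$, then $|E| = |B'| \leq C_0 \leq (\log|A|)^{-1+o(1)} |A+A|$ trivially for $|A|$ large, using $|A+A| \geq |A| \to \infty$. Otherwise, the contrapositive of Theorem~\ref{thm:me_2xto3aps} applied to $B'$ yields
\begin{align*}
  |B'| \leq \frac{C |A+A| (\log\log|B'|)^7}{\log|B'|}.
\end{align*}
A case split on whether $|B'| \geq |A|^{1/2}$ (so that $\log|B'| \geq \tfrac{1}{2}\log|A|$ and the above is immediately of the required form) or $|B'| < |A|^{1/2}$ (so that $|B'| < |A|^{1/2} \leq (\log|A|)^{-1+o(1)} |A| \leq (\log|A|)^{-1+o(1)} |A+A|$ for $|A|$ large) then gives $|A+A| - |A \,\wh{+}\, A| = |E| = |B'| \leq (\log|A|)^{-1+o(1)} |A+A|$ in all cases, which is equivalent to the desired lower bound on $|A \,\wh{+}\, A|$.
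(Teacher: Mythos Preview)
Your proof is correct and reconstructs precisely the argument the paper defers to: the paper's proof consists of a one-line reference to the argument of~\cite[Theorem~1.5]{sanders3aps}, into which the improved bound of Theorem~\ref{thm:me_2xto3aps} is inserted, and that argument is exactly the one you have written out---identify the ``unique doubles'' set~$B'$, show it is free of nontrivial three-term progressions (with the careful selection of one preimage per element of~$E$ to handle $2$-torsion), and apply the doubling hypothesis in contrapositive.
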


This improves upon the exponent $-\tfrac{1}{3}$ on the logarithm
obtained by Sanders~\cite{sanders3aps} via Theorem~\ref{thm:san_2xto3aps},
since Theorem~\ref{thm:me_2xto3aps} is used instead.
Note that by Behrend's construction~\cite{lyallbehrend}, 
the restricted sumset may have size as low as 
${ ( 1 - e^{-c\sqrt{\log |A|} } ) |A+A| }$ and
therefore the bounds for this problem match those for Roth's theorem
closely.

Finally, we remark that by the finite modelling argument of 
Green and Ruzsa~\cite[Lemma~2.1]{greenruzsamodelling},
it suffices to prove all our results
in the case where the group is finite abelian, 
and therefore we work under that hypothesis
for the rest of the paper.
This concludes our introduction and
we discuss the structure of this paper
in the next section.

\medskip

\textbf{Funding.}
This research was supported by
a \textit{contrat doctoral} from
Université Paris~$7$ and 
by the ANR~Caesar ANR-12-BS01-0011.

\section{Overview}

In this section we sketch the argument 
behind our results and outline the structure of this paper.
We use the symbols $\approx$ and $\gtrsim$
to indicate statements that hold true up to certain 
negligible factors.

The first logical step in the proof 
of Theorem~\ref{thm:me_2xto3aps}
consists in applying the correlation version
of Sanders' Bogolyubov-Ruzsa lemma~\cite{sandersBR}
(Proposition~\ref{thm:br_correl}) to deduce
that a set $A$ of doubling $K$ has density~$\asymp 1/K$ 
in (a translate of) a large Bourgain system $B$, a group-like object
whose properties are recalled in Section~\ref{sec:bsyst}.
The second step is to obtain an efficient 
local version of Roth's theorem 
(Proposition~\ref{thm:roth_localroth}), 
which, roughly saying, asserts that a set $A$ of density 
$\alpha \gtrsim (\log |B|)^{-1}$
in a large Bourgain system $B$
contains many arithmetic progressions,
and therefore a nontrivial one.
This may be applied to the previous system $B$,
for which $|B| \approx |A|$ and $\alpha \asymp 1/K$, 
under the condition $K \lesssim \log |A|$,
thereby establishing Theorem~\ref{thm:me_2xto3aps}.
The local Roth theorem is developed in Section~\ref{sec:roth},
drawing on analytic tools from Section~\ref{sec:local},
and it is combined in the preceding fashion
with the correlation Bogolyubov-Ruzsa lemma
in Section~\ref{sec:3aps}.

To derive Theorem~\ref{thm:me_2xtolongaps}, 
we need to obtain instead a local version
of an almost-periodicity lemma of Croot et al.~\cite{CLS}
(Proposition~\ref{thm:long_almostp}), 
drawing again on the tools of Section~\ref{sec:local}.
This process, carried out in Section~\ref{sec:long}, 
requires a somewhat simpler version of Sanders'
Bogolyubov-Ruzsa lemma (Proposition~\ref{thm:br_cont}) 
which deduces containment of a large Bourgain system 
in the sumset $2A-2A$ from the 
hypothesis that $A$ has small doubling,
and the rest of the argument follows the
strategy of~\cite{CLS}.

Finally, to illustrate some of the above ideas,
we showcase the proof
of Theorem~\ref{thm:me_2xto3aps} in the model
setting of $\F_3^n$, where the proof of Sanders' 
Bogolyubov-Ruzsa lemma~\cite{sandersBR}
simplifies substantially.
As an added benefit, the formidable bounds of
Bateman and Katz~\cite{batemankatz} 
for caps in $\F_3^n$
yield a larger admissible range of doubling
in this setting.
The notation used in the proof is introduced
in Section~\ref{sec:notation}.

\begin{theorem}
\label{thm:me_2xto3aps_ff}
	There exist positive absolute constants 
	$c$ and $\eps$ such that the following holds.
	Suppose that $A$ is a subset of $\F_3^n$ such that
	\begin{align*}
		|A + A| \leq c (\log |A|)^{1 + \eps} \cdot |A|.
	\end{align*}
	Then $A$ contains a proper three-term arithmetic progression.
\end{theorem}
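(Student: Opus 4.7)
The plan is to specialize the general strategy of Section~2 to $\F_3^n$, where both the Bogolyubov-Ruzsa ingredient and the local Roth theorem take a much cleaner form. First I would apply the correlation version of Sanders' Bogolyubov-Ruzsa lemma (Proposition~\ref{thm:br_correl}) to $A \subseteq \F_3^n$. In the $\F_3^n$ setting, the Bourgain system delivered by the general statement should degenerate to a coset of an affine subspace, so the conclusion should take the form: there exists an affine subspace $W \leq \F_3^n$ with $|W| \approx |A|$ such that $A$ has relative density $\alpha \gtrsim K^{-1}$ on $W$. This is the only place where the small doubling hypothesis is used, and it is here that the simplification promised in the introduction should kick in.

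Having extracted such a subspace $W$, I would translate so that $W$ becomes a linear subspace $V \cong \F_3^m$ with $m \asymp \log_3 |A|$, and work with $A' = (A - w) \cap V$, which is a set of density $\gtrsim 1/K$ inside $\F_3^m$. Here I would invoke the Bateman-Katz bound~\cite{batemankatz} for cap sets in $\F_3^m$, which provides a nontrivial three-term arithmetic progression in any subset of $\F_3^m$ of density at least $m^{-1-\eps_0}$, for some absolute $\eps_0 > 0$. It follows that $A$ contains a nontrivial progression as soon as
\begin{align*}
    K^{-1} \gtrsim m^{-1-\eps_0} \asymp (\log |A|)^{-1-\eps_0},
\end{align*}
which, after choosing $\eps < \eps_0$ to absorb any polylogarithmic losses concealed in the $\approx$ and $\gtrsim$ symbols, matches the hypothesis of the theorem. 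Properness of the progression is automatic since $\F_3^n$ has no $2$-torsion: a three-term arithmetic progression $(x, x+d, x+2d)$ in $\F_3^n$ is degenerate only if $d = 0$, so ``nontrivial'' and ``proper'' coincide here.

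The main obstacle will be the bookkeeping in the correlation Bogolyubov-Ruzsa step: one must confirm that the subspace $W$ really has size $|W| = |A|^{1-o(1)}$ and that the local density is of order at least $K^{-1}(\log K)^{-O(1)}$, so that the losses fit comfortably inside the slack provided by the $(\log |A|)^{\eps}$ factor in the hypothesis. Granting a clean version of Proposition~\ref{thm:br_correl} in $\F_3^n$, no Bourgain system machinery and no local Roth theorem are required, because Bateman-Katz applies globally inside the honest subspace~$V$; the proof then reduces to a direct composition of these two ingredients.
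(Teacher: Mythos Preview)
Your proposal is correct and follows the same two-step strategy as the paper: extract a large subspace on which (a translate of) $A$ has density $\gtrsim 1/K$, then apply Bateman--Katz inside that subspace. The one difference lies in how the subspace is produced. The paper first applies Green--Ruzsa modelling (an $\F_3^n$ adaptation of~\cite[Proposition~6.1]{greenruzsamodelling}) to realize $A$ as a set of density at least $K^{-4}$ in some $\F_3^m$, and only then invokes the simpler dense-setting version~\cite[Theorem~A.1]{sandersBR} of Sanders' lemma to find a subspace $V$ of codimension $O((\log K)^4)$ with $\|1_A \ast \mu_V\|_\infty \geq 1/(2K)$; this is precisely the ``substantial simplification'' of the Bogolyubov--Ruzsa argument in $\F_3^n$ advertised in Section~2. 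You instead invoke the general correlation result Proposition~\ref{thm:br_correl} as a black box, relying on the observation that in $\F_3^n$ every coset progression is already a subspace. Both routes are valid and yield the same conclusion; the paper's is more elementary and self-contained in the model setting, while yours is shorter once one is willing to quote the full correlation lemma.
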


\begin{proof}
Write $K = |A + A| / |A|$, so that
we are assuming that $K \leq c (\log |A|)^{1 + \eps}$.
The proof of~\cite[Proposition~6.1]{greenruzsamodelling}
readily adapts to $\F_3^n$, and shows that
$A$ is Freiman-isomorphic to a subset
of doubling $K$ and density at least $K^{-4}$
of another finite field $\F_3^m$,
which we identify with $A$ from now on.
By examining the proof 
of~\cite[Theorem~A.1]{sandersBR},
which works equally well in $\F_3^m$,
one may deduce that there exist a 
measure $\mu$ and a subspace
$V$ of $\F_3^m$ of codimension
at most $C(\log K)^4$ such that
\begin{align*}
	\langle 1_A \ast \mu_V \ast \mu_{A + A} \ast \mu, \mu_A \rangle_{L^2} 
	\geq \tfrac{1}{2} \mu_G(A) / \mu_G(A+A)  . 
\end{align*}
By the definition of $K$, and upon applying 
Hölder's and Young's inequalities, we obtain
\begin{align*}
	\tfrac{1}{2K}
	&\leq \langle 1_A \ast \mu_V \ast \mu_{A + A} \ast \mu, \mu_A \rangle_{L^2} \\
	&\leq \| 1_A \ast \mu_V \ast \mu_{A + A} \ast \mu \|_{\infty} \| \mu_A \|_{L^1} \\
	&\leq \| 1_A \ast \mu_V \|_\infty .
\end{align*}
Therefore we may find $x$ such that $A' = (A - x) \cap V$
has density at least $\tfrac{1}{2K}$ in $V$.
Since $V$ has codimension at most $C (\log K)^4$,
it has size at least $|G|^{1/2}$ in our range of $K$.
Applying~\cite[Theorem~1.1]{batemankatz} to $A'$, 
we are then ensured to find a proper three-term 
arithmetic progression in $A'$ provided
\begin{align*}
	\tfrac{1}{2K} \geq C (\log |V|)^{-(1+\eps)}
\end{align*}
and this concludes the proof
since $\log |V| \asymp \log |A| $.
\end{proof}

\section{Notation}
\label{sec:notation}

In this section we introduce the notation
used throughout the article.

\textit{Ambient group.}
We let $G$ denote a fixed, finite abelian group.
The arguments of later sections all take place
in this group unless otherwise stated.

\textit{$\Z$-actions.}
The group $G$ is naturally equipped with 
a structure of $\Z$-module, 
and we let $k \cdot x$ denote the action 
of a scalar $k \in \Z$ on an element $x \in G$.
For a subset $X$ of $G$ and a subset $I$ of $\Z$,
we further write
\begin{align*}
	k \cdot X = \{ k \cdot x : x \in X \}
	\quad\text{and}\quad
	I \cdot x = \{ k \cdot x : k \in I \}.
\end{align*}
Note that $\cdot$ is also used in other places
for the regular multiplication of complex numbers, 
however it should be clear from the context which one is meant.

\textit{Functions.}
We define the averaging operator over a subset $X$ of $G$, 
which acts on the space of functions $f : G \rightarrow \C$, 
by ${\E_X f = |X|^{-1} \sum_{x \in X} f(x)}$,
and we write $\E_{x \in X} f(x)$ when we want to
keep the variable explicit.
It is also convenient to introduce 
the operator of translation on a function $f$
defined by $\tau_x f (u) = f(x+u)$ for all $x,u \in G$.
We furthermore define the support of $f$ as 
$\Supp(f) = \{ x \in G : f(x) \neq 0 \}$.
On the physical space, we use the normalized counting measure 
so that for functions ${f,g : G \rightarrow \C}$, we let
\begin{align*}
	&\text{($L^p$-norm)} &
	\| f \|_{L^p} &= (\E_G \, |f|^p)^{1/p} , &
	& \\
	&\text{(Scalar product)} &
	\langle f, g \rangle_{L^2} &= \E_G \, f \bar{g} , &
	& \\
 	&\text{(Convolution)} &
 	f \ast g (x) &= \E_{y \in G} f(y) g(x - y) &
	&\forall x \in G.
\end{align*}
We occasionally write $\| f \|_p $ for $\| f \|_{L^p}$,
and we let $f^{(\ell)}$ denote the
convolution of $f$ with itself $\ell$ times.

\textit{Measures.}
We identify measures $\mu$ on $G$ with
functions ${\mu : G \rightarrow \R_+}$ via the
identity $\mu( \{ x \} ) = |G|^{-1} \mu(x)$, so that
$\mu (E) = \langle 1_E , \mu \rangle_{L^2}$
for every subset $E$ of $G$.
We only consider probability measures;
in other words, we always assume that
$\| \mu \|_{L^1} = 1$.
We write $\mu_A$ for the measure defined by 
$\mu_A (E) = |E \cap A| / |A|$ for every set $E$, 
which under our identification
corresponds to the function 
$\mu_A = \mu_G(A)^{-1} 1_A$.

\textit{Fourier transform.}
The Fourier transform over finite abelian groups
is now a standard tool of additive combinatorics.
It is very well explained for example in~\cite{hatami},
and here we only recall its main properties.

Write $\U$ for the unit circle,
then the dual group $\dual$ is
defined as the set of morphisms from $G$ to $\U$,
called characters,
and the Fourier transform of a function 
$f : G \rightarrow \C$
is defined by ${\wf (\gamma) = \langle f , \gamma \rangle_{L^2}}$ 
at every character $\gamma$.
We write $(f)^\wedge$ for the Fourier transform of $f$
when $f$ has a complicated expression.

We define the summation operator over a subset $\Delta$ of $\dual$,
which acts on the space of functions $F : \dual \rightarrow \C$, by
${\sum_{\Delta} F = \sum_{\gamma \in \Delta} F(\gamma)}$.
On the Fourier space, we use the counting measure
so that for functions $F,G : \dual \rightarrow \C$, we let
\begin{align*}
	&\text{($\ell^p$-norm)} &
	\|F\|_{\ell^p} &= \big(\! \ssum_{\dual} |F|^p \big)^{1/p} \! , & 
	&\phantom{\text{($\ell^p$-norm)}} \\
	&\text{(Scalar product)} &
	\langle F,G \rangle_{\ell^2} &= \ssum_{\dual}  F \, \cj{G}. & 
	&\phantom{\text{($\ell^p$-norm)}}
\end{align*}
The three classic formul\ae\ of harmonic analysis 
then read as follows:
\begin{align*}
	&\text{(Fourier inversion)} &
	f &= \ssum_{\dual} \wf(\gamma) \gamma, &
	&\phantom{\text{(Fourier inversion)}} \\
	&\text{(Parseval formula)} &
	\langle f, g \rangle_{L^2} &= \langle \wf, \wg\, \rangle_{\ell^2}, &
	&\phantom{\text{(Fourier inversion)}} \\
	&\text{(Convolution identity)} &
	( f \ast g )^\wedge &= \wf \cdot \wg \,. &
	&\phantom{\text{(Fourier inversion)}}
\end{align*}

\textit{Other.}
We let $c$ and $C$ denote absolute positive constants,
which may take different values at each occurence.
Given nonnegative functions $f$ and $g$, we let
$f = O(g)$ or $f \ll g$ indicate the fact that
there exists a constant $C$ such that $f \leq Cg$, 
and we let $f = \Theta(g)$ or $f \asymp g$
indicate that $f \ll g$ and $g \ll f$ hold simultaneously.
We also write $\ell(x) = \log(e/x)$ for $x \geq 1$,
since this quantity arises often in our computations.
Note finally that in many occurences of 
logarithms throughout the paper,
one should replace $\log x$ by $\log ex$ for the
results to be formally correct in all ranges of parameters;
we leave this as a mental task to the reader 
to alleviate the notation.
Other notation in this paper is introduced
in the relevant section as needed.

\section{Bourgain systems}
\label{sec:bsyst}

In this section we recall the theory of Bourgain systems,
which was introduced by Green and Sanders~\cite{greensanders}
as a generalization of the Bohr set technology
of Bourgain~\cite{bourgainroth}.
In a sense these systems are the most general class 
of sets for which the strategy 
of density increment on Bohr sets,
pioneered by Bourgain~\cite{bourgainroth}, 
may be carried out.
What is needed for such an undertaking is for the set 
to behave approximately like a $d$-dimensional ball
with respect to dilation,
as axiomatized in the following definition.

\begin{definition}[Bourgain system]
\label{thm:bsyst_def}
	A Bourgain system of dimension $d$ is a family of sets
	${\mB = (B_\rho)_{\rho > 0}}$, 
	where $B_\rho$ are subsets of $G$ such that, 
	for all positive $\rho$ and $\rho'$,
	\begin{align*}
		&\text{(containment of $0$)} &
		0 &\in B_{\rho} &&\\
		&\text{(symmetry)} &
		- B_{\rho} &= B_{\rho} &&\\
		&\text{(nesting)} &
		B_{\rho} &\subset B_{\rho'} & &\text{if}\quad \rho \leq \rho' \\
		&\text{(additive closure)} &		
		B_{\rho} + B_{\rho'} &\subset B_{\rho+\rho'} &&\\
		&\text{($2^d$-covering)} &
		\exists X_{\rho} : B_{2\rho} &\subset X_{\rho} + B_{\rho} &
	 	&\text{and}\quad |X_{\rho}| \leq 2^d.
	\end{align*}
	We write $B = B_1$, and we define
	the density of $\mathcal{B}$ as $b = |B| / |G|$.
\end{definition}

We let the sets $B_\rho$, and sometimes also 
the dimension $d$ and the density $b$, 
be defined implicitely whenever we 
introduce a Bourgain system $\mB$. 
We now describe two important classes of Bourgain systems:
Bohr sets and coset progressions.
To define the former, we consider 
the multiplicative analog $\| \cdot \|_{\U}$ on the unit circle
of the usual pseudo-norm~$\| \cdot \|_{\T} = d( \,\cdot , \Z )$ 
on the torus, defined by ${ \| e(\theta) \|_{\U} = \| \theta \|_{\T} }$
for every $\theta \in \T$.

\begin{definition}[Bohr set]
\label{thm:bset_def}
	Suppose that $\Gamma \subset \dual$ and $\delta > 0$.
	The Bohr set of frequency set $\Gamma$ 
	and radius $\delta$ is 
	\begin{align*}
		B = B(\Gamma,\delta) = \{ x \in G : \| \gamma(x) \|_{\U} \leq \delta \}.
	\end{align*}
	The dimension of $B$ is $d = |\Gamma|$.
	We define the dilate of $B$ by $\rho > 0$ as 
	the set $B_{\rho} = B(\Gamma,\rho\delta)$,
	and the Bohr system induced by $B$ as the system 
	$\mathcal{B} = (B_\rho)_{\rho > 0}$.
\end{definition}

The usual bounds for the size and growth of a Bohr set 
allow us to quickly estimate the dimension 
and density of the Bourgain system it induces.

\begin{lemma}
\label{thm:bset_dimdensity}
	The system $\mathcal{B}$ induced by a Bohr set
	of dimension $d$ and radius $\delta \leq 1$
	is a Bourgain system $\mathcal{B}$
	of dimension at most $6d$ and density at least $\delta^d$.
\end{lemma}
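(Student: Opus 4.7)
The plan is to verify the five axioms in Definition~\ref{thm:bsyst_def} for the family $\mB = (B_\rho)_{\rho>0}$ with $B_\rho = B(\Gamma, \rho\delta)$, and then to check the claimed bounds on dimension and density. The first four axioms are essentially immediate from standard properties of characters and of the pseudo-norm $\|\cdot\|_{\U}$: containment of $0$ follows from $\gamma(0)=1$; symmetry uses $\gamma(-x) = \cj{\gamma(x)}$ together with the conjugation-invariance of $\|\cdot\|_{\U}$; nesting is monotonicity of the defining inequality in $\rho$; and additive closure follows from the triangle inequality for $\|\cdot\|_{\U}$.

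The core of the argument is the covering axiom, which I would handle via a maximal disjoint-packing argument. Choose $X_\rho \subset B_{2\rho}$ to be maximal with the property that the translates $(x + B_{\rho/2})_{x\in X_\rho}$ are pairwise disjoint. Maximality combined with the symmetry and additive closure axioms just verified forces $B_{2\rho} \subset X_\rho + (B_{\rho/2} - B_{\rho/2}) \subset X_\rho + B_\rho$, so it suffices to show $|X_\rho| \leq 2^{6d}$; disjointness together with the inclusion $X_\rho + B_{\rho/2} \subset B_{5\rho/2}$ then yields $|X_\rho| \leq |B_{5\rho/2}|/|B_{\rho/2}|$, and the task reduces to controlling this ratio.

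Both the ratio bound and the density estimate would come from the canonical map $\phi : G \to \T^d$ sending $x$ to $(\gamma(x))_{\gamma\in\Gamma}$; its image $H = \phi(G)$ is a subgroup of $\T^d$ and $|B_\rho| = |\ker\phi| \cdot |H \cap [-\rho\delta,\rho\delta]^d|$. Translation invariance of $H$ inside $\T^d$ shows that any axis-parallel cube of side $\eta$ meets $H$ in at most $|H \cap [-\eta,\eta]^d|$ points (pick a point of $H$ in the cube and translate it to the origin), so covering $[-5\rho\delta/2, 5\rho\delta/2]^d$ by $10^d$ such cubes of side $\rho\delta/2$ yields $|B_{5\rho/2}|/|B_{\rho/2}| \leq 10^d \leq 2^{6d}$. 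For the density bound, Fubini gives $\int_{\T^d} |\phi^{-1}(y + [-\delta/2, \delta/2]^d)|\, dy = \delta^d |G|$, so there exists $y^\star$ producing a set $S = \phi^{-1}(y^\star + [-\delta/2, \delta/2]^d)$ of size at least $\delta^d |G|$; the inclusion $S - S \subset B$ then gives $|B| \geq |S| \geq \delta^d |G|$ and hence $b \geq \delta^d$. The only subtle point is the regime $\rho\delta \gtrsim 1$, where cubes in $\T^d$ wrap around and the naive volume packing loses; this is handled via the trivial bound $B_\rho \subset G$, and the stated constant $2^{6d}$ comfortably absorbs the resulting slack.
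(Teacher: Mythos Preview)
Your proposal is correct and follows essentially the same route as the paper. Both proofs dispatch the first four axioms quickly, then establish the covering axiom via the Ruzsa covering lemma (your maximal disjoint packing is exactly that), reducing to a growth bound of the form $|B_{5\rho/2}|/|B_{\rho/2}| \leq 2^{6d}$; the paper cites this and the density lower bound from~\cite[Lemma~4.20]{taovu}, whereas you supply the standard proofs of those two facts directly via the map into $\T^d$ and a Fubini/averaging argument.
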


\begin{proof}
The first four properties of 
a Bourgain system are easy to check.
Further, by three applications of~\cite[Lemma~4.20]{taovu} we obtain 
$|B_{4\rho}| \leq 2^{6d} |B_{\rho/2}|$, and therefore
by Ruzsa's covering lemma we may find a set $X_\rho$ such that
\begin{align*}
	B_{2\rho} \subset X_\rho + B_{\rho/2} - B_{\rho/2} \subset X_\rho + B_{\rho}
\end{align*}
and $|X_\rho| \leq |B_{2\rho} + B_{\rho/2}| / |B_{\rho/2}| \leq 2^{6d}$.
Working through the argument in that reference, 
one could extract a better bound $2^{2d}$, but this
would not affect our end results much.
The bound on the density may be read directly 
from~\cite[Lemma~4.20]{taovu}.
An alternate reference for these estimates 
is~\cite[Section~5]{meexpobourgain}.
\end{proof}

In our definition of a coset progression,
we write $[x,y]_\Z = \{ n \in \Z : x \leq n \leq y \}$
for reals $x \leq y$.

\begin{definition}[Coset progression]
\label{thm:cprog_def}
	Let $L \in \R_{+}^{d}$ and $\omega \in G^d$ where $d\geq 1$, 
	and let $H$ be a subgroup of $G$.
	The coset progression of dimension $d$ determined by
	$L,\omega,H$ is 
	\begin{align*}
		M = M(L,\omega,H) = 
		[-L_1,L_1]_{\Z} \cdot \omega_1 + \dotsb + [-L_d,L_d]_{\Z} \cdot \omega_d + H.
	\end{align*}
	We define the dilate of $M$ by $\rho >0$ as 
	$M_{\rho} = M(\rho L,\omega,H)$, 
	and the coset progression system induced by $M$ as
	the system $\mM = (M_\rho)_{\rho > 0}$.
\end{definition}

The dimension of the Bourgain system induced by 
a coset progression may be estimated by a simple 
covering argument.

\begin{lemma}
\label{thm:cprog_systdim}
	The system $\mM$ induced by a
	$d$-dimensional coset progression $M$ is
	a Bourgain system of dimension at most $3d$.
\end{lemma}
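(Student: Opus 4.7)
The plan is to verify each of the five Bourgain system axioms for $\mM$ in turn. The first four — containment of $0$, symmetry, nesting, and additive closure — follow immediately from the definition of a coset progression, using only the trivial facts that each integer interval $[-\rho L_i, \rho L_i]_\Z$ contains $0$, is symmetric about $0$, is monotonic in $\rho$, and satisfies ${ [-\rho L_i, \rho L_i]_\Z + [-\rho' L_i, \rho' L_i]_\Z \subset [-(\rho + \rho') L_i, (\rho + \rho') L_i]_\Z }$, together with the corresponding properties of the subgroup $H$ (in particular $H + H = H$).

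The only substantive step is the covering axiom. The plan is to first prove a one-dimensional statement: for each $i$, the interval $[-2\rho L_i, 2\rho L_i]_\Z$ is covered by the three integer translates ${ j \lceil \rho L_i \rceil + [-\rho L_i, \rho L_i]_\Z }$ with $j \in \{-1, 0, 1\}$. Pushing this forward to $G$ via $n \mapsto n \cdot \omega_i$, summing the resulting three-covers over $i = 1, \dots, d$, and using $H + H = H$, one obtains
\begin{align*}
	M_{2\rho} \subset X_\rho + M_\rho,
	\quad\text{where}\quad
	X_\rho = \Big\{ \ssum_{i=1}^d j_i \lceil \rho L_i \rceil \cdot \omega_i : (j_1, \dots, j_d) \in \{-1, 0, 1\}^d \Big\}.
\end{align*}
Since $|X_\rho| \leq 3^d \leq 2^{3d}$, this delivers the required covering and completes the verification.

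There is no real obstacle in this argument: it amounts to a routine axiom check. The only mild subtlety is confirming that the integer endpoints in the one-dimensional three-cover line up correctly, which is an elementary case analysis on the fractional part of $\rho L_i$ (one verifies that the shift $m = n - \lceil \rho L_i \rceil$ satisfies $|m| \leq \rho L_i$ whenever $\lfloor \rho L_i \rfloor < n \leq 2 \rho L_i$, and symmetrically at the other end).
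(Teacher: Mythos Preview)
Your proof is correct and follows essentially the same route as the paper: both dispatch the first four axioms immediately, then establish the covering property by showing that each one-dimensional interval $[-2\rho L_i,2\rho L_i]_\Z$ is covered by three integer translates of $[-\rho L_i,\rho L_i]_\Z$, and take a product over $i$ to obtain a covering set of size at most $3^d \leq 2^{3d}$. The only cosmetic difference is that you specify the translates explicitly as $j\lceil \rho L_i\rceil$ with $j\in\{-1,0,1\}$ and verify them by a brief case analysis, whereas the paper simply asserts the three-cover fact for $[-k,k]_\Z$ by $[-k/2,k/2]_\Z$.
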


\begin{proof}
	It is again rather simple to derive the first
	four properties of a Bourgain system for $\mM$,
	and we now concern ourselves with the fifth.
	The dilate of $M$ by $\rho > 0$ is
	\begin{align*}
		M_\rho = 
		[-\rho L_1,\rho L_1]_{\Z} \cdot \omega_1 + \dotsb + [- \rho L_d, \rho L_d]_{\Z} \cdot \omega_d + H .
	\end{align*}
	To obtain the covering property, first 
	observe that for any $k \in \N_{\geq 0}$,
	one may cover the interval $[-k,k]_{\Z}$ 
	by three translates of $[ - \tfrac{k}{2} , \tfrac{k}{2} ]_{\Z}$
	(this is sharp for $k$ odd), 
	and that this still holds for any real $k \geq 0$.
	Therefore, for every $1 \leq i \leq d$, we may find 
	a set $T_i$ with $|T_i| \leq 3$ such that
	$[-2\rho L_i,2\rho L_i]_{\Z} \subset T_i + [-\rho L_i,\rho L_i]_{\Z}$.
	Consequently, for any $\rho > 0$ we have a covering
	\begin{align*}
		M_{2\rho} 
		\subset \bigcup_{t \in T_1 \times \dotsb \times T_d}
		( t_1 \cdot \omega_1 + \dotsb + t_d \cdot \omega_d + M_{\rho} )
		= X_\rho + M_\rho
	\end{align*}
	for a certain set $X_\rho$ of size at most $|T_1| \cdots |T_d| \leq 3^d$.
\end{proof}

With these examples covered, we now
work exclusively within the framework of Bourgain systems.
We start by defining a few basic operations 
on these systems.

\begin{lemma}[Dilation]
\label{thm:bsyst_dilation}
	Suppose that $\lambda \in (0,1]$ and that
	$\mB$ is a Bourgain system of dimension $d$ and density $b$.
	Then the dilated system
	$\mB_\lambda = \big( B_{\lambda\rho} \big)_{\rho > 0}$
	is a Bourgain system of dimension at most $d$ 
	and density at least $(\lambda/2)^d \cdot b$.
\end{lemma}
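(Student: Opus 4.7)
The plan is to verify the five Bourgain system axioms of Definition~\ref{thm:bsyst_def} for $\mB_\lambda$ in turn, and then deal with the density bound separately. The first four axioms transfer immediately from $\mB$: containment of $0$, symmetry, nesting and additive closure for the family $(B_{\lambda\rho})_{\rho > 0}$ are just the corresponding properties of $\mB$ applied to the scaled parameters $\lambda\rho, \lambda\rho'$, using for additive closure that $\lambda\rho + \lambda\rho' = \lambda(\rho + \rho')$. For the covering axiom, the set $X_{\lambda\rho}$ furnished by $\mB$ at scale $\lambda\rho$ already satisfies $B_{2\lambda\rho} \subset X_{\lambda\rho} + B_{\lambda\rho}$ with $|X_{\lambda\rho}| \leq 2^d$, so the dilated system inherits the dimension bound $d$.

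The only substantive step is the density bound, and there is no real obstacle. First I would apply the covering axiom at scale $\rho/2$ to obtain $|B_\rho| \leq |X_{\rho/2}| \cdot |B_{\rho/2}| \leq 2^d |B_{\rho/2}|$, and iterate this to get $|B_{2^{-k}}| \geq 2^{-kd} |B|$ for every integer $k \geq 0$. Then, choosing $k = \lceil \log_2(1/\lambda) \rceil$, one has simultaneously $2^{-k} \leq \lambda$, so that by nesting $|B_\lambda| \geq |B_{2^{-k}}|$, and $2^{-k} \geq \lambda/2$, so that $2^{-kd} \geq (\lambda/2)^d$. Combining these yields $|B_\lambda| \geq (\lambda/2)^d |B|$, hence a density at least $(\lambda/2)^d \cdot b$ for $\mB_\lambda$. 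The boundary case $\lambda = 1$ corresponds to $k = 0$ and is trivial.
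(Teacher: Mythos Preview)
Your proof is correct and essentially identical to the paper's. Both verify the five axioms directly and obtain the density bound by iterating the covering estimate $|B_\rho| \leq 2^d |B_{\rho/2}|$ down to a dyadic scale just below~$\lambda$; the only cosmetic difference is that the paper indexes its dyadic interval as $2^{-(k+1)} < \lambda \leq 2^{-k}$ while you take $k = \lceil \log_2(1/\lambda) \rceil$, which amounts to a shift by one in the index.
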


\begin{proof}
Let $\lambda \in (0,1]$, and choose $k \geq 0$
such that $2^{-(k+1)} < \lambda \leq 2^{-k}$.
By the covering property
of Definition~\ref{thm:bsyst_def},
we have $|B_{\rho}| \leq 2^d |B_{\rho/2}|$ for every $\rho > 0$,
from which it follows by iteration that
$ |B| \leq 2^{(k+1)d} |B_{1/2^{k+1}}| \leq (2/\lambda)^d |B_\lambda|$.
That $\mB_\lambda$ is a $d$-dimensional Bourgain system is obvious,
and the bound on the density follows from 
the previous computation.
\end{proof}


\begin{definition}[Sub-Bourgain systems]
	Suppose that $\mB$ and $\mB'$ are two Bourgain systems.
	We say that $\mB$ is a sub-Bourgain system of $\mB'$,
	and we write $\mB \leq \mB'$, 
	when $B_\rho \subset B'_\rho$ for all $\rho > 0$.
	For $\lambda \in (0,1]$, we also write 
	$\mB \leq_\lambda \mB'$ when $\mB \leq \mB_{\lambda}'$.
\end{definition}

The properties of an intersection of Bourgain systems
were derived in~\cite[Lemma~3.4]{sanders3aps}, whose
proof we reproduce here for completeness.

\begin{lemma}[Intersection]
\label{thm:bsyst_intersection}
	Suppose that $\mB^{(1)},\dots,\mB^{(k)}$ are Bourgain systems
	of dimensions $d_1,\dots,d_k$ and densities $b_1,\dots,b_k$.
	Then the intersection system
	\begin{align*}
		\mB_1 \wedge \dots \wedge \mB_k 
		= ( B_\rho^{(1)} \cap \dots \cap B_\rho^{(k)})_{\rho > 0}
	\end{align*}
 	is a Bourgain system of dimension at most
	$2(d_1 + \dotsb + d_k)$ and of
	density at least ${ 4^{-(d_1 + \dotsb + d_k)} b_1 \cdots b_k }$.
\end{lemma}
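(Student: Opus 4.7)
Write $D_\rho := B_\rho^{(1)} \cap \dots \cap B_\rho^{(k)}$ for the intersection at scale $\rho$. The first four Bourgain-system axioms for $(D_\rho)_{\rho > 0}$ are inherited routinely from the $\mB^{(i)}$, since each is preserved under intersection. The real work is in establishing the covering property with the stated dimension and in the density lower bound.

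For the covering property, my plan is to apply the $2^{d_i}$-covering of each $\mB^{(i)}$ \emph{twice}, producing a set $Y_i \subset G$ of size at most $2^{2d_i}$ with $B^{(i)}_{2\rho} \subset Y_i + B^{(i)}_{\rho/2}$. Every $x \in D_{2\rho}$ then admits a tuple $(y_1, \dots, y_k) \in Y_1 \times \dots \times Y_k$ such that $x - y_i \in B^{(i)}_{\rho/2}$ for each $i$, and two elements assigned the same tuple differ by an element of $\bigcap_i (B^{(i)}_{\rho/2} - B^{(i)}_{\rho/2}) \subset \bigcap_i B^{(i)}_\rho = D_\rho$. This covers $D_{2\rho}$ by at most $|Y_1| \cdots |Y_k| \leq 2^{2(d_1 + \dots + d_k)}$ translates of $D_\rho$. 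The double dilation $2\rho \mapsto \rho/2$ is essential here: applying the covering only once would force differences to lie in $B^{(i)}_{2\rho}$, too coarse to give $D_\rho$.

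For the density bound I plan to avoid an inductive scheme (which would degrade the dimension geometrically) in favor of a direct Fourier calculation. Consider $F := \prod_{i=1}^k (\mu_{B^{(i)}_{1/2}} \ast \mu_{B^{(i)}_{1/2}})$; by additive closure, $\Supp F \subset D_1$. Since each $B^{(i)}_{1/2}$ is symmetric, $\wh{\mu_{B^{(i)}_{1/2}}}$ is real-valued, so expanding each factor via Fourier inversion and averaging over $G$ yields
\begin{align*}
    \E_x F(x) = \ssum_{\gamma_1 + \dots + \gamma_k = 0} \sprod_{i=1}^k \big| \wh{\mu_{B^{(i)}_{1/2}}}(\gamma_i) \big|^2 \geq 1 ,
\end{align*}
the lower bound coming from the trivial-character term (worth $1$, since each $\mu_{B^{(i)}_{1/2}}$ is a probability measure) and the nonnegativity of the remaining terms. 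Each factor is also pointwise bounded by its value at $0$, namely $|G|/|B^{(i)}_{1/2}|$, so $\E F \leq (|D_1|/|G|) \prod_i |G|/|B^{(i)}_{1/2}|$. Rearranging and invoking $|B^{(i)}_{1/2}| \geq 2^{-d_i}|B^{(i)}_1|$ (from one application of the $\mB^{(i)}$-covering) gives $|D_1|/|G| \geq 2^{-\sum d_i} \prod b_i$, which is in fact stronger than the claimed $4^{-\sum d_i} \prod b_i$.

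The key difficulty throughout is arranging that both the dimension exponent and the density exponent depend linearly on $\sum d_i$ rather than growing with $k$; the double-cover trick for the dimension and the product-in-Fourier identity for the density are specifically designed to achieve this.
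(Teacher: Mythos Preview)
Your covering argument is essentially identical to the paper's: both apply each $\mB^{(i)}$-covering twice to land in $B^{(i)}_{\rho/2}$, then observe that two points assigned the same tuple of translates differ by an element of $\bigcap_i B^{(i)}_\rho$.

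Your density argument, however, is genuinely different and in fact slightly sharper. The paper proceeds combinatorially: for each $i$ it applies Ruzsa's covering lemma to cover $G$ by at most $4^{d_i} b_i^{-1}$ translates of $B^{(i)}_{1/2}$, intersects these coverings, and pigeonholes to find a cell $A(t)$ with $|A(t)| \geq 4^{-\sum d_i} (\prod b_i) |G|$ and $A(t)-A(t) \subset D_1$. Your approach instead exploits positivity on the Fourier side via the product of autoconvolutions $F = \prod_i \mu_{B^{(i)}_{1/2}} \ast \mu_{B^{(i)}_{1/2}}$, bounding $\E F$ below by the trivial-character contribution and above by $\|F\|_\infty \cdot \mu_G(D_1)$. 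This yields $\mu_G(D_1) \geq \prod_i \mu_G(B^{(i)}_{1/2}) \geq 2^{-\sum d_i} \prod b_i$, a factor of $2^{\sum d_i}$ better than the paper's bound. The paper's argument has the advantage of being purely combinatorial and of producing an explicit large cell inside $D_1$; yours is shorter, avoids Ruzsa covering entirely, and gives the better constant.
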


\begin{proof}
The first four properties of a Bourgain
system are again easy to check, and we
now consider the covering property.
Let $\rho > 0$. 
For each $1 \leq i \leq k$,
apply the covering property of $\mB^{(i)}$ twice 
to obtain a set $T_i$ of size at most $4^{d_i}$ such that
$B_{2\rho}^{(i)} \subset T_i + B_{\rho/2}^{(i)}$.
Distributing intersection over union, we have then
\begin{align*}
	\ccap_{1 \leq i \leq d} B_{2\rho}^{(i)}
	= \ccup_{(t_1,\dots,t_k) \in T_1 \times \cdots \times T_k}
	\ccap_{1 \leq i \leq k} \big( t_i + B_{\rho/2}^{(i)} \big).
\end{align*}
Now pick an element $x(t)$ in each nonempty intersection
$\bigcap_i ( t_i + B_{\rho/2}^{(i)} )$.
Then for each element $x$ of $\bigcap_i B_{2\rho}^{(i)}$,
we may find an element $t \in \prod_i T_i$ such that
\begin{align*}
	x - x(t) 
	\in \ccap_i \big( B_{\rho/2}^{(i)} - B_{\rho/2}^{(i)} \big) 
	\subset \ccap_i B_{\rho}^{(i)}.
\end{align*}
This yields the desired covering with $X_{\rho}$ 
defined as the set of all $x(t)$.

To estimate the density of the intersection, 
first apply Ruzsa's covering lemma for each $1 \leq i \leq k$
to obtain a covering of the form
\begin{align*}
	G \subset T_i + B_{1/4}^{(i)} - B_{1/4}^{(i)} \subset T_i + B_{1/2}^{(i)}
\end{align*}
where $T_i$ is a set of size $|T_i| \leq 4^{d_i} b_i^{-1}$.
From $G \subset \cap_i \,( T_i + B_{1/2}^{(i)} )$, it follows that
\begin{align*}
	G 
	= \ccup_{(t_1,\dots,t_k) \in T_1 \times \cdots \times T_k}
	\ccap_{1 \leq i \leq k} \big( t_i + B_{1/2}^{(i)} \big) 
	= \ccup_{t \in T_1 \times \cdots \times T_k} A(t)
\end{align*}
where $A(t)$ are sets satisfiying
$ A(t) - A(t) \subset \bigcap_i B^{(i)} $.
By the pigeonhole principle, 
we may also find a point $t$ such that
\begin{align*}
	|A(t)| \geq \frac{|G|}{|T_1| \cdots |T_k|} \geq 
	4^{ - (d_1 + \dotsb + d_k)} b_1 \cdots b_k |G|,
\end{align*}
which yields the desired density estimate
since $|A(t) - A(t)| \geq |A(t)|$.
\end{proof}

We consider one last operation on Bourgain systems;
since it is so simple we leave it as an exercise 
to the reader.

\begin{lemma}[Homomorphic image]
\label{thm:bsyst_homimage}
	Suppose that $\mB$ is 
	a Bourgain system of dimension $d$,
	and $\phi$ is an endomorphism of $G$.
	Then the image system 
	$\phi(\mB) = \big( \phi(B_\rho) \big)_{\rho > 0}$
	is a Bourgain system of dimension at most $d$.
\end{lemma}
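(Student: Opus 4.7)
The plan is to verify the five defining properties of a Bourgain system for the image family $\phi(\mB) = (\phi(B_\rho))_{\rho > 0}$ one by one, using only that $\phi$ is a group homomorphism from $G$ to itself. The proof is routine and parallels the way each property passed through intersection or dilation in the previous lemmas.

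First I would handle the four easy axioms in order. Since $\phi(0) = 0$ and $0 \in B_\rho$, containment of $0$ is immediate. Symmetry follows from $\phi(-x) = -\phi(x)$, giving $-\phi(B_\rho) = \phi(-B_\rho) = \phi(B_\rho)$. Nesting is preserved because $B_\rho \subset B_{\rho'}$ whenever $\rho \leq \rho'$ implies $\phi(B_\rho) \subset \phi(B_{\rho'})$. Additive closure uses that $\phi$ is a homomorphism: for any $x \in B_\rho$ and $y \in B_{\rho'}$, we have $\phi(x) + \phi(y) = \phi(x+y) \in \phi(B_{\rho+\rho'})$, so $\phi(B_\rho) + \phi(B_{\rho'}) \subset \phi(B_{\rho+\rho'})$.

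Finally, for the $2^d$-covering, I would take the set $X_\rho \subset G$ provided by Definition~\ref{thm:bsyst_def} satisfying $B_{2\rho} \subset X_\rho + B_\rho$ with $|X_\rho| \leq 2^d$. Applying $\phi$ and using additivity again gives
\begin{align*}
    \phi(B_{2\rho}) \subset \phi(X_\rho + B_\rho) = \phi(X_\rho) + \phi(B_\rho),
\end{align*}
and the set $\phi(X_\rho)$ has cardinality at most $|X_\rho| \leq 2^d$. This establishes the covering property with the same dimension bound $d$ for the image system.

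There is no real obstacle here; the only thing worth noting is that we only obtain the bound \emph{at most} $d$ on the dimension, not equality, since $\phi$ may collapse the covering set $X_\rho$ and in fact the entire system. This is why the statement is phrased with ``dimension at most $d$'', matching the convention used elsewhere in the section.
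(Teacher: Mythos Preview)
Your verification is correct and complete; the paper itself omits the proof entirely, remarking that the lemma ``is so simple we leave it as an exercise to the reader.'' Your axiom-by-axiom check is precisely the intended routine argument.
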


Finally, we recall the essential notion of regularity
introduced by Bourgain~\cite{bourgainroth} 
for Bohr sets, and which has a natural analogue
for Bourgain systems.
We let\footnote{These precise constants, featured in
subsequent lemmas, are derived
in~\cite[Section~6]{meexpobourgain}.}
$C_0 = 2^5$ and $C_1 = 2^6$ in what
follows for definiteness, although the exact values
are unimportant.

\begin{definition}[Regular Bourgain system]
\label{thm:reg_def}
	We say that a Bourgain system $\mB$ of dimension $d$ is regular when,
	for every $|\rho| \leq \tfrac{1}{C_0 d}\,$,
	\begin{align*}
		1 - C_0 |\rho| d \leq \frac{|B_{1+\rho}|}{|B|} \leq 1 + C_0 |\rho| d.
	\end{align*}
\end{definition}

In practice one can always afford
to work with regular Bourgain systems,
as is the case with Bohr sets,
via~\cite[Proposition~3.5]{sanders3aps}
which we now quote.

\begin{lemma}
\label{thm:reg_regularizing}
	Suppose that $\mB$ is a Bourgain system.
	Then there exists 
	$\lambda \in \big[ \tfrac{1}{2},1 \big]$ 
	such that $\mB_{\lambda}$ is regular.
\end{lemma}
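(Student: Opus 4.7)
The plan is to adapt the classical regularization argument of Bourgain for Bohr sets to the present setting. I consider the log-size function $f(\lambda) = \log |B_\lambda|$ on $(0,2]$, which is non-decreasing by the nesting axiom of Definition~\ref{thm:bsyst_def}; iterating the $2^d$-covering axiom three times yields $f(2) - f(1/4) \leq 3d \log 2$. Since $f$ has total increase at most $3d\log 2$ on the entire interval $[1/4, 2]$, regularity should hold somewhere inside $[1/2, 1]$ by a covering argument.

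Suppose for contradiction that $\mB_\lambda$ fails to be regular for every $\lambda \in [1/2,1]$. Translating the multiplicative conditions of Definition~\ref{thm:reg_def} into additive ones via the elementary estimates $e^t \leq 1 + 2t$ and $e^{-t} \geq 1 - t$ for small positive $t$, this means that I can associate to each such $\lambda$ a value $\eta_\lambda \in (0, 1/(C_0 d)]$ satisfying
\begin{equation*}
f\big(\lambda(1+\eta_\lambda)\big) - f\big(\lambda(1-\eta_\lambda)\big) > c_0\, d\, \eta_\lambda
\end{equation*}
for a suitable absolute constant $c_0$ chosen relative to $C_0$ (monotonicity of $f$ ensures this controls both the upper and lower ratio bounds in Definition~\ref{thm:reg_def}).

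The open intervals $I_\lambda = \big(\lambda(1-\eta_\lambda),\, \lambda(1+\eta_\lambda)\big)$ form an open cover of $[1/2,1]$. I apply the Vitali covering lemma on $\R$ to extract a countable disjoint subfamily $\{I_{\lambda_j}\}_j$ whose three-fold enlargements still cover $[1/2,1]$. Comparing Lebesgue measures gives $\sum_j \eta_{\lambda_j} \geq 1/12$ (using $\lambda_j \leq 1$), while the disjointness of the $I_{\lambda_j}$, all contained in $[1/4, 2]$, combined with the monotonicity of $f$ yields
\begin{equation*}
c_0\, d \sum_j \eta_{\lambda_j} < \sum_j \Big[ f\big(\lambda_j(1+\eta_{\lambda_j})\big) - f\big(\lambda_j(1-\eta_{\lambda_j})\big) \Big] \leq f(2) - f(1/4) \leq 3 d \log 2.
\end{equation*}
For $c_0$ larger than some absolute constant this is a contradiction, and the lemma follows.

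The argument is essentially routine; the main care required is in choosing $c_0$ so that the contradiction is compatible with the concrete value $C_0 = 2^5$ stipulated in Definition~\ref{thm:reg_def}, which amounts to straightforward constant-chasing rather than any new idea beyond the standard Vitali covering.
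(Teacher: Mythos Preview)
Your argument is correct and is precisely the standard Bourgain regularization argument; the paper does not prove this lemma itself but simply quotes~\cite[Proposition~3.5]{sanders3aps}, where essentially this same argument appears. The constant-chasing you flag does work out with $C_0 = 2^5$ once one notes that the disjoint intervals $I_{\lambda_j}$ actually lie in a subinterval of $[1/4,2]$ with endpoint ratio less than~$4$, so that the total variation of $f$ there is at most $2d\log 2$ rather than $3d\log 2$.
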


The regularity computations in subsequent
sections rely on the following $L^1$ estimate.

\begin{lemma}
\label{thm:reg_averaging}
	Suppose that $\mB$ is a regular Bourgain system of
	dimension $d$ and $\mu$ is a measure on $G$ with support in $B_\rho$,
	where $0 < \rho \leq \tfrac{1}{C_1 d}$.
	Then
	\begin{align*}
		\| \mu_B \ast \mu - \mu_B \|_{L^1} \leq C_1 \rho d .
	\end{align*}
\end{lemma}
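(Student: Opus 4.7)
The plan is to reduce the statement to a pointwise estimate on how $\mu_B$ reacts to translation by elements of $B_\rho$, and then to extract that estimate from the regularity axiom. Since $\mu$ is a probability measure, one may write
\[
	\mu_B \ast \mu(x) - \mu_B(x) = \E_{y \in G} \mu(y) \bigl[ \mu_B(x - y) - \mu_B(x) \bigr],
\]
and applying the triangle inequality in $L^1$ (together with Fubini) yields
\[
	\| \mu_B \ast \mu - \mu_B \|_{L^1} \leq \E_{y \in G} \mu(y) \| \tau_{-y} \mu_B - \mu_B \|_{L^1}.
\]
Since $\mu$ is a probability measure supported in $B_\rho$, it suffices to show the uniform bound $\| \tau_{-y} \mu_B - \mu_B \|_{L^1} \leq C_1 \rho d$ for every $y \in B_\rho$.

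Next I would rewrite this translation difference as a symmetric-difference count: unpacking $\mu_B = b^{-1} 1_B$ gives
\[
	\| \tau_{-y} \mu_B - \mu_B \|_{L^1} = \frac{| B \triangle (B + y) |}{|B|}.
\]
Now invoke the structural axioms of $\mB$. For $y \in B_\rho$, additive closure gives $B + y \subset B_1 + B_\rho \subset B_{1+\rho}$, and combining additive closure with symmetry shows that any $x \in B_{1-\rho}$ satisfies both $x \in B$ (by nesting) and $x - y \in B_{1-\rho} + B_\rho \subset B$, so $x \in B + y$. Hence we obtain the sandwich
\[
	B_{1-\rho} \subset B \cap (B + y) \subset B \cup (B + y) \subset B_{1+\rho},
\]
from which $|B \triangle (B+y)| \leq |B_{1+\rho}| - |B_{1-\rho}|$.

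Finally, the hypothesis $\rho \leq 1/(C_1 d)$ implies $\rho \leq 1/(C_0 d)$, so regularity (Definition~\ref{thm:reg_def}) bounds the right-hand side above by $2 C_0 \rho d \cdot |B|$, and substituting this back and using $C_1 = 2 C_0$ closes the argument. The main obstacle is essentially cosmetic rather than conceptual: there is no hard step, only the bookkeeping that converts the bilateral regularity inequality into a translation estimate via the sandwich above. The lemma is really just the statement that the regularity axiom was designed to encode an $L^1$-continuity of $\mu_B$ under convolution by small measures.
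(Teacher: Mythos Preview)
Your proof is correct and is essentially the same as the paper's: both reduce to bounding $\|\tau_{-y}\mu_B - \mu_B\|_{L^1}$ uniformly for $y \in B_\rho$ via the observation that the support of $\mu_{y+B} - \mu_B$ lies in $B_{1+\rho} \smallsetminus B_{1-\rho}$, then average against $\mu$. You have simply spelled out the sandwich $B_{1-\rho} \subset B \cap (B+y) \subset B \cup (B+y) \subset B_{1+\rho}$ more explicitly than the paper does.
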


\begin{proof}
	For every $y \in B_\rho$, the function
	$\mu_{y + B} - \mu_B$ has support in
	$B_{1+\rho} \smallsetminus B_{1-\rho}$,
	so that
	\begin{align*}
		\| \mu_{y + B} - \mu_B \|_{L^1} 
		\leq \frac{|B_{1+\rho}| - |B_{1-\rho}|}{|B|}
		\leq 2 C_0 \rho d.
	\end{align*}
	Averaging over $y \in G$ with weights $\mu(y)$, 
	and using the triangle inequality, 
	we recover the desired estimate.
\end{proof}

\section{Spectral analysis on Bourgain systems}
\label{sec:local}

This section is concerned with collecting
all the analytic information we need about
the large spectrum of the indicator functions of
certain sets.
The main task is to obtain a large structured set
on which all characters of the large spectrum
take values close to $1$,
since such a set may be later used for purposes 
of a density-increment-based iteration, 
or to locate long arithmetic progressions.

When considering indicator functions of subsets of Bohr sets, 
the information we seek is provided by the spectral analysis 
developed by Sanders~\cite{sandersother},
and the aim of this section is therefore 
to obtain a similar analysis for Bourgain systems.
Note that such a process was already carried out 
in the earlier article~\cite{sanders3aps},
however we benefit here from
the more efficient analysis of the local spectrum 
from~\cite{sandersother}.
To be specific, there is now a local analog of 
Chang's bound~\cite[Lemma~4.6]{sandersother}
which supersedes the earlier local analog
of Bessel's inequality~\cite[Proposition~4.4]{sanders3aps}.
We now give the precise statements,
and in that regard it is useful to
recall the following definitions.

\begin{definition}[Annihilation]
\label{thm:local_annihdef}
	Let $\nu \in (0,2]$ be a parameter,
	and suppose that $T$ is a subset of $G$
	and $\Delta$ is a subset of $\dual$.
	We say that $\Delta$ is $\nu$-annihilated by $T$ when
	\begin{align*}
		| 1 - \gamma (t)| \leq \nu
		\quad\text{for all $t \in T$ and $\gamma \in \Delta$.}
	\end{align*}
	When $\mB$ is a Bourgain system,
	we say that it $\nu$-annihilates $\Delta$ 
	when $B$ does.
\end{definition}

The quantity we seek to annihilate is then the following.

\begin{definition}[Large spectrum]
	Suppose that $\eta \in (0,1]$ be a parameter
	and $f : G \rightarrow \C$ is a function.
	The $\eta$-large spectrum of $f$ is the level set of $\dual$ defined by
	\begin{align*}
		\Spec_\eta(f) = \{ \, |\wf| \geq \eta \| f \|_{L^1} \}.
	\end{align*}
\end{definition}

We also need to recall one piece of terminology
from~\cite[Section~4]{sandersother}, which is only used in this section.
Write $\D$ for the unit disk,
and let $\mu$ be any measure on $G$.
Given a parameter ${\theta \in (0,1]}$, 
we say that a set $\Lambda$ of characters is 
{$(\theta,\mu)$-dissociated} when,
for every function ${\omega: \Lambda \rightarrow \D}$, we have
\begin{align*}
	\int \prod_{\lambda \in \Lambda} 
	\Big( 1 + \re[ \omega( \lambda ) \lambda ] \Big) 
	\mathrm{d}\mu \leq e^\theta,
\end{align*}
and when $\theta = 1$ we simply say that $\Lambda$ is $\mu$-dissociated.
We may now quote two lemmas of local spectral analysis 
from~\cite{sandersother}, with minor tweaks in both cases.

\begin{lemma}[Local Chang bound]
\label{thm:local_chang}
	Let $\eta \in (0,1]$ be a parameter,
	and suppose that $B$ is a subset of $G$
	and $X$ is a subset of $B$ of density $\tau$.
	Then every $\mu_B$-dissociated subset of
	$\Spec_\eta (\mu_X)$ has size 
	at most $C \eta^{-2} \log \tau^{-1}$.
\end{lemma}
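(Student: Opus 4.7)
The plan is to imitate the classical proof of Chang's inequality, with the integrated dissociativity hypothesis playing the role of standard dissociativity, as carried out by Sanders in \cite{sandersother}.

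First I would choose phases $\omega_\lambda \in \U$ so that $\omega_\lambda \overline{\widehat{\mu_X}(\lambda)} = |\widehat{\mu_X}(\lambda)|$ for each $\lambda \in \Lambda$, and form the character sum $T = \sum_{\lambda \in \Lambda}\omega_\lambda \lambda$. By the large-spectrum hypothesis,
\begin{align*}
\int T \, d\mu_X \;=\; \sum_{\lambda \in \Lambda}|\widehat{\mu_X}(\lambda)| \;\geq\; |\Lambda|\eta .
\end{align*}

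The heart of the argument, and the main obstacle, will be to package the $\mu_B$-dissociativity hypothesis into a Rudin-type moment bound
\begin{align*}
\int |T|^{2k} \, d\mu_B \;\leq\; (Ck|\Lambda|)^k, \qquad k \in \N.
\end{align*}
I would obtain this by applying the defining integral inequality of $\mu_B$-dissociativity to rescaled phases $t\omega_\lambda$ with $t \in (0,1]$ and extracting the top-degree contribution upon expanding the Riesz product; equivalently, one can expand $|T|^{2k}$ directly into a symmetric combination of $k$-fold products of characters and their conjugates, to each of which $\mu_B$-dissociativity applies. This is the step which supersedes the local Bessel-type inequality of~\cite[Proposition~4.4]{sanders3aps}, and the only place where a genuine analytic input is needed.

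Once the moment bound is available, the rest is routine. Since $X \subset B$ with $|X|/|B| = \tau$, we have the pointwise inequality $\mu_X \leq \tau^{-1} \mu_B$, and therefore $\int |T|^{2k} \, d\mu_X \leq \tau^{-1}(Ck|\Lambda|)^k$. Hölder's inequality on the probability space $(G,\mu_X)$ then yields
\begin{align*}
|\Lambda|\eta \;\leq\; \Big| \int T \, d\mu_X \Big| \;\leq\; \|T\|_{L^{2k}(\mu_X)} \;\leq\; \tau^{-1/(2k)}\sqrt{Ck|\Lambda|},
\end{align*}
so that $|\Lambda| \leq Ck\,\tau^{-1/k}\eta^{-2}$. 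Finally, choosing $k = \lceil \log \tau^{-1}\rceil$ makes $\tau^{-1/k} \leq e$ and produces the claimed bound $|\Lambda| \leq C\eta^{-2}\log\tau^{-1}$.
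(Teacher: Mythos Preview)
Your proposal is correct and follows the same route as the paper: the paper's proof is nothing more than a citation of~\cite[Lemma~4.6]{sandersother} specialized to $f=\mu_X$, $\mu=\mu_B$ (so that $L_{\mu_X,\mu_B}=\tau^{-1/2}$), and what you have written is precisely a sketch of the proof of that cited lemma. The only place where your outline is a little telegraphic is the derivation of the Rudin-type moment bound $\int |T|^{2k}\,d\mu_B \leq (Ck|\Lambda|)^k$ from $\mu_B$-dissociativity; the clean way is to use $1+x\geq e^{x-x^2}$ for $|x|\leq\tfrac12$ on each factor of the Riesz product with phases $t\omega_\lambda$, obtain the exponential moment bound $\int e^{t\,\re T}\,d\mu_B \leq e^{1+t^2|\Lambda|}$, and then extract $L^{2k}$ moments by the usual Chernoff-to-moment computation, rather than ``extracting the top-degree contribution'' directly.
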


\begin{proof}
This is~\cite[Lemma~4.6]{sandersother}, specialized
to the case where $f = \mu_X$ and $\mu = \mu_B$, so that
with the notation from there
$L_{\mu_X,\mu_B} = \tau^{-1/2}$.
\end{proof}

\begin{lemma}[Annihilating locally dissociated sets]
\label{thm:local_dissocann}
	Let $\nu \in (0,1]$ be a parameter.
	Suppose that $\mB$ is a regular Bourgain system,
	$\Delta$ is a set of characters, and
	$m$ is the size of the largest $\mu_B$-dissociated
	subset of $\Delta$, or $1$ if there is no such subset.
	Then there exists a Bohr set $\wt{B}$ of dimension
	at most $m$ and radius equal to $c / m$ such that
	$\Delta$ is $\nu$-annihilated by
	$B_{c \nu / d^2 m} \cap \wt{B}_\nu$.
\end{lemma}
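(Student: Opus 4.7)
The plan is to mimic the standard derivation of an annihilating Bohr set from Chang's lemma, adapted to the weighted dissociativity framework of~\cite{sandersother}. First, I would extract a maximal $\mu_B$-dissociated subset $\Lambda \subset \Delta$, necessarily of size at most $m$, and define $\wt{B} = B(\Lambda, c_0/m)$ for a small absolute constant $c_0$. This $\wt{B}$ has the dimension and radius prescribed by the lemma, and its dilate $\wt{B}_\nu = B(\Lambda, c_0\nu/m)$ would then $\tfrac{\nu}{2}$-annihilate $\Lambda$ itself via the elementary bound $|1 - e(\theta)| \leq 2\pi\|\theta\|_{\T}$, provided $c_0$ is chosen appropriately.

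The substantive task is to handle a general $\gamma \in \Delta \smallsetminus \Lambda$. Maximality forces $\Lambda \cup \{\gamma\}$ to fail $\mu_B$-dissociativity, so the Riesz product $\int \prod (1 + \re[\omega(\cdot)\cdot])\,\mathrm{d}\mu_B$ over this larger set exceeds $e$ for some choice of weights $\omega$. Expanding the product as a signed linear combination of Fourier coefficients $\wh{\mu_B}(\chi_{\epsilon})$ at the characters $\chi_{\epsilon} = \prod_\lambda \lambda^{\epsilon_\lambda}\gamma^{\epsilon_\gamma}$, and using the dissociativity of $\Lambda$ itself to control the $\epsilon_\gamma = 0$ contribution by $e$, a Chang-style reconstruction argument in the spirit of~\cite[Lemma~4.6]{sandersother} extracts a sign pattern $\epsilon \in \{-1,0,1\}^\Lambda$ for which $\chi := \gamma \cdot \prod_\lambda \lambda^{-\epsilon_\lambda}$ satisfies the quantitative spectral bound $|\wh{\mu_B}(\chi)| \gtrsim 1/(dm)$. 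This is the main technical obstacle, and is precisely where the improved local analysis of~\cite{sandersother} is needed in order to obtain a lower bound polynomial in $1/m$ rather than exponentially small.

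Once $\chi$ is in hand, regularity converts the spectral information into pointwise annihilation by small dilates of $B$: for $t \in B_\delta$, Lemma~\ref{thm:reg_averaging} applied with $\mu$ the point mass at $t$ together with the identity $\wh{\mu_{B+t}}(\chi) = \cj{\chi(t)}\wh{\mu_B}(\chi)$ yields $|1 - \chi(t)| \cdot |\wh{\mu_B}(\chi)| \leq C_1 \delta d$; choosing $\delta = c\nu/(d^2 m)$ and invoking the lower bound from the previous step gives $|1 - \chi(t)| \leq \nu/2$. For $t$ in the intersection $B_{c\nu/d^2 m} \cap \wt{B}_\nu$, the factorization $\gamma(t) = \chi(t) \prod_\lambda \lambda(t)^{\epsilon_\lambda}$ together with the triangle inequality then gives $|1-\gamma(t)| \leq |1-\chi(t)| + \sum_\lambda |1-\lambda(t)^{\epsilon_\lambda}| \leq \tfrac{\nu}{2} + m \cdot \tfrac{\nu}{2m} = \nu$, which is the required bound.
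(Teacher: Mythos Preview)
Your sketch is correct and follows the paper's approach, which is simply to invoke \cite[Lemma~6.3]{sandersother} and observe that the only Bourgain-system-specific input is the regularity estimate of Lemma~\ref{thm:reg_averaging}, precisely the tool you deploy in your penultimate step. You have in fact supplied more detail than the paper itself does: the maximal $\mu_B$-dissociated subset $\Lambda$, the Bohr set on it, and the passage via regularity from a spectral lower bound on $\wh{\mu_B}(\chi)$ to pointwise annihilation are exactly the ingredients of Sanders' argument, and you rightly flag the extraction of a polynomial (rather than exponential) lower bound as the one place where the improved local analysis of~\cite{sandersother} is genuinely required.
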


\begin{proof}
This is~\cite[Lemma~6.3]{sandersother} with $\eta = 1$
and $m = \max(k,1)$, and two minor tweaks:
$\mB$ is a Bourgain system instead of a Bohr set
and a few changes of variables have been effected.
Since the proof requires only a regularity estimate 
of the type of Lemma~\ref{thm:reg_averaging}, 
the generalization to Bourgain systems is immediate.
\end{proof}

As usual these two ingredients combine to
show that the large spectrum of a dense subset
of a Bourgain system
may be efficiently annihilated.
Before carrying this out, we introduce a last
definition which serves to simplify our 
technical statements. 

\begin{definition}
\label{thm:local_bsystcontrol}
	Let $m \geq 1$ be a parameter
	and suppose that $\mB$ is a Bourgain system.
	We say that $\mB$ is 
	{$m$-controlled} when
	it has dimension at most $m$ and density at
	least $\exp[ - Cm\log m ]$.
\end{definition}

We are now ready to introduce
the main technical tool of this paper.
Recall that $\ell(x)$ stands for $\log(e/x)$ here
and throughout the article.

\begin{proposition}[Local spectrum annihilation]
\label{thm:local_specann}
	Let $\eta,\nu \in (0,1]$ be parameters.
	Suppose that $\mB$ is a regular Bourgain system
	and $X$ is a subset of $B$ of relative density $\tau$.
	Then $\Spec_\eta (\mu_X)$ is {$\nu$-annihilated} by 
	a regular Bourgain system of the form 
	\begin{align*}
		\mB_{c\nu / d^2 m} \wedge \wmB_\nu
		\quad\text{where}\quad
		m \leq C \eta^{-2} \ell(\tau) 
	\end{align*}	
	and $\wmB$ is an $m$-controlled Bourgain system.
\end{proposition}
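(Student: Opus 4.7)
The plan is to combine the local Chang bound (Lemma~\ref{thm:local_chang}) with the annihilation result for dissociated sets (Lemma~\ref{thm:local_dissocann}), then package the resulting Bohr set into a Bourgain system and regularize.

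First I would apply Lemma~\ref{thm:local_chang} to $X$ with parameter $\eta$, bounding the size $m_0$ of the largest $\mu_B$-dissociated subset of $\Spec_\eta(\mu_X)$ by $C \eta^{-2}\log\tau^{-1} \leq C \eta^{-2}\ell(\tau)$ (or setting $m_0 = 1$ if no such subset exists). Feeding this $m_0$ into Lemma~\ref{thm:local_dissocann} with $\Delta = \Spec_\eta(\mu_X)$ produces a Bohr set $\wt{B}$ of dimension at most $m_0$ and radius $c/m_0$ for which $\Spec_\eta(\mu_X)$ is $\nu$-annihilated by $B_{c\nu/d^2 m_0} \cap \wt{B}_\nu$. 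Lemma~\ref{thm:bset_dimdensity} then turns $\wt{B}$ into the induced Bourgain system $\wmB$, of dimension at most $6 m_0$ and density at least $(c/m_0)^{m_0} \geq \exp[-C m_0 \log m_0]$; setting $m := 6 m_0$ makes $\wmB$ an $m$-controlled system in the sense of Definition~\ref{thm:local_bsystcontrol}, with $m \leq C \eta^{-2} \ell(\tau)$.

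Next I would form the candidate Bourgain system $\mB^* := \mB_{c\nu/d^2 m} \wedge \wmB_\nu$ via Lemma~\ref{thm:bsyst_intersection}, adjusting $c$ so that $B^*_1 \subset B_{c\nu/d^2 m_0} \cap \wt{B}_\nu$; thereby $\mB^*$ inherits the $\nu$-annihilation property. Since $\mB^*$ need not itself be regular, I would apply Lemma~\ref{thm:reg_regularizing} to obtain some $\lambda \in [\tfrac{1}{2},1]$ for which $(\mB^*)_\lambda$ is regular. By the commutation of intersection with dilation, $(\mB^*)_\lambda = \mB_{\lambda c\nu/d^2 m} \wedge (\wmB_\lambda)_\nu$, and the dilated system $\wmB_\lambda$ remains $m$-controlled: indeed, the density loss $(\lambda/2)^{\dim \wmB}$ from Lemma~\ref{thm:bsyst_dilation} is absorbed into the $\exp[-C m \log m]$ bound by enlarging the absolute constant $C$. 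Renaming $\wmB_\lambda$ as $\wmB$ and $\lambda c$ as $c$ then yields a regular Bourgain system of the required form.

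The main bookkeeping obstacle is threading three conditions at once---the precise shape $\mB_{c\nu/d^2 m} \wedge \wmB_\nu$, the $m$-controlled property of $\wmB$, and regularity of the intersection---through the regularization step; this is resolved by exploiting the flexibility in the absolute constant $c$ together with the freedom to replace $\wmB$ by a dilate of itself.
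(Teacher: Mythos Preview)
Your proposal is correct and follows essentially the same route as the paper's proof: apply the local Chang bound to control the maximal dissociated subset, feed this into the annihilation lemma to obtain a Bohr set, pass to the induced Bourgain system via Lemma~\ref{thm:bset_dimdensity}, and then regularize the intersection by a constant dilation absorbed into~$c$. Your bookkeeping is in fact slightly more explicit than the paper's (distinguishing $m_0$ from $m=6m_0$ and spelling out the commutation of dilation with intersection), but the argument is the same.
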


\begin{proof}
Let $m$ denote the size of the largest
$\mu_B$-dissociated subset of $\Spec_\eta (\mu_X)$,
or $1$ when there is no such set.
By Lemma~\ref{thm:local_chang}, we have
$m \leq C\eta^{-2} \ell (\tau)$.
By Lemma~\ref{thm:local_dissocann}, we also know
that $\Spec_\eta (\mu_X) $ is {$\nu$-annihilated} by
a regular Bourgain system  
$\cmB \coloneqq \mB_{c\nu / d^2 m} \wedge \wmB_\nu$,
where $\wmB$ is the Bourgain system induced
by a Bohr set of dimension $d \leq m$ and radius
$\delta = c / m$.
By Lemma~\ref{thm:reg_regularizing}, 
we may further ensure that $\cmB$ is regular up to
dilating it by a factor $\asymp 1$,
which does not affect the shape of
the above intersection except  
in the value of the constants.
By Lemma~\ref{thm:bset_dimdensity},
we also see that $\wmB$ has dimension at most $6m$
and density at least $\exp[-C m \log m]$,
so that the result follows by replacing $6m$
with $m$ and adapting the constants.
\end{proof}

\section{Roth's theorem for Bourgain systems}
\label{sec:roth}

This section is concerned with a local
version of Roth's theorem~\cite{roth},
first considered by Sanders~\cite{sanders3aps},
which applies to dense subsets of a Bourgain system.
Since the pioneering work of Bourgain~\cite{bourgainroth},
modern proofs of Roth's theorem~\cite{sandersother,sandersroth} 
all share the same global structure and proceed 
by an iteration on subsets of Bohr sets.
An important observation made in~\cite{sanders3aps} is
that this iteration may be initialized inside a 
certain Bohr set instead of the whole group,
and further that one may perform the same iteration
on Bourgain systems in place of Bohr sets.

However the quantitative estimates obtained in~\cite{sanders3aps}
correspond roughly in strength to a range of 
$\alpha \gtrsim (\log N)^{-1/3}$ in Roth's theorem, 
while the best-known range, also by Sanders~\cite{sandersroth}, 
is now $\alpha \gtrsim (\log N)^{-1}$.
Conceptually, there is no obstacle in
obtaining this better quantitative dependency 
with Bourgain systems,
and for the same local initialization,
however on a technical level 
it is not entirely straightforward 
as most density-increment statements 
then take a different shape.
We carry out this process in this section;
since it is not the right place here to
present the whole argument of~\cite{sandersroth},
we only include the main structural results we need from it
and indicate the changes that need to be done to other.
Unfortunately, this means that the reader 
needs either to be conversant with~\cite{sandersroth},
or to read this section conditionally on
Proposition~\ref{thm:roth_it2scales} below.
What we obtain eventually is
the following quantitative 
improvement of~\cite[Theorem~5.1]{sanders3aps}.

\begin{proposition}[Local Sanders-Roth theorem]
\label{thm:roth_localroth}
	Suppose that $\mB$ is a regular Bourgain system
	and $A$ is a subset of $B$ of relative density $\alpha$
	such that $A - A$ contains no element of order $2$.	
	Then
	\begin{align*}
		\langle 1_A \ast 1_A , 1_{2 \cdot A} \rangle_{L^2}
		\geq \exp\big[ - C (\alpha^{-1} + d) \ell(\alpha)^6 \ell(\alpha/d) \big] \cdot b^2.
	\end{align*}
\end{proposition}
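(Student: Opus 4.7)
My plan is to adapt Sanders' density-increment iteration from~\cite{sandersroth} to the Bourgain-system setting, feeding in the local Chang bound of Proposition~\ref{thm:local_specann} in place of the weaker Bessel-type input used in~\cite{sanders3aps}. The starting point is the Fourier expansion
\[
\langle 1_A \ast 1_A , 1_{2 \cdot A} \rangle_{L^2} = \ssum_{\gamma \in \dual} \wh{1_A}(\gamma)^2 \, \wh{1_{2 \cdot A}}(-\gamma),
\]
from which a standard $L^\infty/L^2$ manipulation, after subtracting the main term attached to the trivial character, reduces matters to a dichotomy: either the 3-AP count already meets the claimed lower bound, or the $\eta$-large spectrum of $\mu_A$ with $\eta \asymp \alpha$ carries a significant portion of the $L^2$ mass relative to the background measure $\mu_B$.

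Next I would state and prove a two-scale iteration proposition (Proposition~\ref{thm:roth_it2scales} below) as a direct transcription of~\cite[\S 4]{sandersroth} into the Bourgain-system language. The transcription is largely mechanical: Bohr set dilation, intersection, regularization, and $L^1$ averaging are replaced respectively by Lemmas~\ref{thm:bsyst_dilation}, \ref{thm:bsyst_intersection}, \ref{thm:reg_regularizing}, and \ref{thm:reg_averaging}; the spectrum-annihilation step invokes Proposition~\ref{thm:local_specann} with $\eta \asymp \alpha$, producing an $m$-controlled Bourgain system with $m \ll \alpha^{-2}\ell(\alpha)$ that annihilates the relevant large spectrum. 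The output is the usual density increment: a regular sub-Bourgain system $\mB'$ and a translate $A' \subset B'$ of relative density at least $\alpha(1+\kappa)$ with $\kappa^{-1} \ll \ell(\alpha)^{O(1)}$, together with a per-step dimension gain of only $\ell(\alpha)^{O(1)}$---this last point being the essential contribution of the two-scale refinement from~\cite{sandersroth}.

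Iterating this dichotomy from $(\mB, A)$ yields a sequence $(\mB^{(j)}, A^{(j)})$ with strictly growing relative density. Since $\alpha^{(j)} \leq 1$, the process halts after at most $J \ll \kappa^{-1}\ell(\alpha) \ll \alpha^{-1}\ell(\alpha)^{O(1)}$ steps, at which point the 3-AP conclusion holds on $(\mB^{(J)}, A^{(J)})$. Accumulating per-step losses---dimension grows by at most $J \cdot \ell(\alpha)^{O(1)}$ on top of the initial $d$, and density degrades by a factor $\exp[-C\ell(\alpha/d)]$ per step via the intersection lemma---pulling back the terminal estimate to $\mB$ delivers the claimed bound $\exp[-C(\alpha^{-1}+d)\ell(\alpha)^6 \ell(\alpha/d)] \cdot b^2$, the two terms $\alpha^{-1}$ and $d$ in the exponent reflecting respectively the iteration length and the initial dimension.

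The principal technical obstacle is faithfully reproducing the two-scale mechanism of~\cite{sandersroth}: this is what keeps the per-step dimension gain logarithmic in $\alpha$ rather than polynomial, and is ultimately what delivers the $\alpha^{-1}$ exponent (versus the $\alpha^{-1/3}$ of~\cite{sanders3aps}). Matching the relevant averaging and regularity estimates at two nested Bourgain systems, where the covering and regularity constants are slightly looser than in the Bohr-set case, is where the real labour concentrates, and is why the adaptation is not entirely formal.
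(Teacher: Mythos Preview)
Your high-level plan---adapt the iteration of~\cite{sandersroth} to Bourgain systems, replacing Bohr-set spectrum annihilation by Proposition~\ref{thm:local_specann}---is indeed the paper's approach. However, your quantitative bookkeeping is internally inconsistent and misidentifies the source of the $\alpha^{-1}$ factor in the exponent.

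You invoke Proposition~\ref{thm:local_specann} with $\eta \asymp \alpha$ to get $m \ll \alpha^{-2}\ell(\alpha)$, then assert that the two-scale refinement reduces the per-step dimension gain to $\ell(\alpha)^{O(1)}$, and that the iteration runs for $J \ll \alpha^{-1}\ell(\alpha)^{O(1)}$ steps---while simultaneously claiming $\kappa^{-1} \ll \ell(\alpha)^{O(1)}$, which would already force $J \ll \ell(\alpha)^{O(1)}$. In fact, in~\cite{sandersroth} and in the paper's Proposition~\ref{thm:roth_it1scale}, the density increment is by a \emph{constant} factor, so the iteration runs for only $O(\ell(\alpha))$ steps; the per-step dimension gain is $m \ll \alpha^{-1}\ell(\alpha)^{O(1)}$, not polylogarithmic. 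The reduction from the naive Chang bound $\alpha^{-2}\ell(\alpha)$ down to $\alpha^{-1}\ell(\alpha)^{O(1)}$ is the entire substance of~\cite{sandersroth} (the arguments of Sections~4--6 there, imported wholesale into Proposition~\ref{thm:roth_it2scales}), not a byproduct of the two-scale setup alone. The $\alpha^{-1}$ in the final exponent therefore arises from the per-step dimension bound $\wt{d} \ll \alpha^{-1}\ell(\alpha)^4$ accumulated over $O(\ell(\alpha))$ iterations, not from the iteration length as you claim.

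Were you to execute your plan as literally described---naive Chang input $m \ll \alpha^{-2}\ell(\alpha)$ combined with a standard $L^2$ density increment---you would recover at best Bourgain's $\alpha^{-2}$ dependence in the exponent, not the claimed $\alpha^{-1}$. The missing ingredient is precisely Sanders' mechanism for achieving a constant-factor density increment on a system of dimension only $\alpha^{-1}\ell(\alpha)^{O(1)}$, which you gesture at but do not correctly describe.
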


We make a brief comment here
on the shape of the above proposition.
The three-term arithmetic progressions
contained in a set $A$ are precisely the
triples $(x,y,z)$ of $A^3$
such that $x + z = 2 \cdot y$.
The assumption on $A$ shows that
the change of variables $y \mapsto 2 \cdot y$ 
is injective on $A$, 
from which we see that the total number of such progressions
is equal to $\langle 1_A \ast 1_A , 1_{2 \cdot A} \rangle_{L^2} \cdot |G|^2$.
We invite the reader to keep this observation in mind, 
as it is used implicitely in later arguments.

We now present our modified version of 
the argument of~\cite{sandersroth}.
To begin with, we reconstitute the $L^2$ density-increment 
strategy entirely as it takes a different form for Bourgain systems, 
which determines the shape of iterative statements.
The following lemma is the usual argument that allows one to pass
from large energy of the Fourier transform 
over a character set, to a density increment on any set 
annihilating those characters.

\begin{lemma}
\label{thm:roth_L2tocorrel}
	Let $\rho,\kappa \in (0,1]$ be parameters.
	Suppose that $\mB$ is a regular Bourgain system,
	$A$ is a subset of $B$ of relative density $\alpha$,
	$T$ is a subset of $B_\rho$ and
	$\Delta$ is a set of characters.
	Assume also that $\rho \leq c \kappa \alpha / d$ and 
	write $f_A = 1_A - \alpha 1_B$.
	Then if 
	\begin{align*}
		\sum_\Delta |\wf_A|^2 \geq \kappa \alpha^2 b
		\qquad\text{and $\Delta$ is $\tfrac{1}{2}$-annihilated by $T$},
	\end{align*}
	we have $\| 1_A \ast \mu_T \|_{\infty} 
	\geq ( 1 + 2^{-3} \kappa )\alpha$.
\end{lemma}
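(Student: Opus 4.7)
My plan is to argue directly, without contradiction. The hypothesis that $\Delta$ is $\tfrac{1}{2}$-annihilated by $T$ gives, for each $\gamma \in \Delta$,
\[
  |1 - \wh\mu_T(\gamma)| = \big| \E_{t \in T}(1 - \bar\gamma(t)) \big| \leq \tfrac{1}{2},
\]
so $|\wh\mu_T(\gamma)|^2 \geq \tfrac{1}{4}$ there; combining with Parseval and the convolution identity furnishes the lower bound $\|f_A * \mu_T\|_{L^2}^2 \geq \tfrac{1}{4} \sum_\Delta |\wh{f_A}|^2 \geq \kappa \alpha^2 b/4$. The key geometric observation, from additive closure of $\mB$, is that for any $x \in B_{1-\rho}$ and $t \in T \subset B_\rho$ we have $x - t \in B_{1-\rho} + B_\rho \subset B$. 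Hence $1_B * \mu_T \equiv 1$ on the inner region $B_{1-\rho}$, so $f_A * \mu_T = 1_A * \mu_T - \alpha$ there, whereas the full support of $f_A * \mu_T$ lies in $B_{1+\rho}$.

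Writing $X = 1_A * \mu_T$ and $M = \|X\|_\infty$, the pointwise inequality $X^2 \leq MX$ yields $M \geq \int_{B_{1-\rho}} X^2 / I_1$, where $I_1 = \int_{B_{1-\rho}} X$. Expanding the square in $\int_{B_{1-\rho}}(X-\alpha)^2 = \int_{B_{1-\rho}}(f_A*\mu_T)^2$ rewrites this as
\[
  M \geq 2\alpha - \frac{\alpha^2 \mu_G(B_{1-\rho})}{I_1} + \frac{\int_{B_{1-\rho}}(f_A * \mu_T)^2}{I_1}.
\]
Together with $I_1 = \alpha b(1 + O(c\kappa))$ and $\mu_G(B_{1-\rho}) = b(1 + O(c\kappa))$ (a regularity calculation), the first two terms combine to $\alpha + O(c\kappa\alpha)$, and if the annulus contribution to $\|f_A*\mu_T\|_{L^2}^2$ is negligible then the last term contributes the main gain $\kappa\alpha/4 + O(c\kappa\alpha)$. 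Choosing $c$ absolutely small enough produces $M \geq \alpha(1 + \kappa/4 - O(c)) \geq (1 + 2^{-3}\kappa)\alpha$.

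The only delicate step is thus the annulus bound: the naive estimate $|f_A*\mu_T| \leq 1$ is too wasteful by a factor $\alpha$ and would force the weaker range $\rho d \leq c\kappa\alpha^2$ instead of the stated $c\kappa\alpha$. Instead, for each $t \in T \subset B_\rho$ the set $B_{1-\rho} - t$ lies inside $B$, so both $(A+t) \setminus B_{1-\rho}$ and $(B+t) \setminus B_{1-\rho}$ are contained in $B \setminus (B_{1-\rho} - t)$, whose size $|B| - |B_{1-\rho}|$ is at most $C_0 \rho d |B|$ by the regularity of $\mB$ (Definition~\ref{thm:reg_def}). Averaging over $t \in T$ yields $\int_{B_{1+\rho} \setminus B_{1-\rho}} 1_A * \mu_T \leq C_0 \rho d b$ and the same for $1_B$; using $(f_A * \mu_T)^2 \leq 2(1_A * \mu_T)^2 + 2\alpha^2(1_B * \mu_T)^2$ together with $\|1_A * \mu_T\|_\infty \leq M$ (and the harmless assumption $M \leq 2\alpha$, lest we already be done) and invoking $\rho \leq c\kappa\alpha/d$ then produces the desired $O(c\kappa\alpha^2 b)$ bound on the annulus. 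This is also the only place where the regularity of $\mB$ enters.
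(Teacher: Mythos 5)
Your proof is correct, and it takes a genuinely different route from the paper's after the common first step. Both arguments begin identically: annihilation gives $|\wh{\mu}_T| \geq \tfrac{1}{2}$ on $\Delta$, and Parseval yields $\|f_A \ast \mu_T\|_{L^2}^2 \geq \tfrac{1}{4}\kappa\alpha^2 b$. From there the paper expands $\|f_A\ast\mu_T\|_2^2$ into $\|1_A\ast\mu_T\|_2^2$ plus two cross terms, rewrites each cross term as $b\,\langle 1_{\cdot},\mu_B\ast\mu_T\ast\mu_{-T}\rangle$, estimates both at once via the $L^1$ almost-invariance estimate of Lemma~\ref{thm:reg_averaging}, and then closes with the global H\"older step $\|1_A\ast\mu_T\|_2^2 \leq \|1_A\ast\mu_T\|_\infty\|1_A\ast\mu_T\|_1$, using the exact identity $\|1_A\ast\mu_T\|_1 = \alpha b$; no case analysis is required. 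You instead work physically: restricting to the inner region $B_{1-\rho}$, where $1_B\ast\mu_T \equiv 1$ and hence $f_A\ast\mu_T = 1_A\ast\mu_T - \alpha$ with no error, you avoid the cross terms entirely and use the pointwise inequality $X^2\leq MX$ there, pushing the error into the annulus $B_{1+\rho}\smallsetminus B_{1-\rho}$, which you estimate directly from the defining regularity estimate of Definition~\ref{thm:reg_def} rather than invoking Lemma~\ref{thm:reg_averaging}. The price is the longer bookkeeping (separate bounds for $I_1$, $\mu_G(B_{1-\rho})$, and the two annulus integrals) and the harmless case split $M \leq 2\alpha$; the payoff is an argument that is more elementary in its use of regularity and closer in spirit to how Bohr-set density increments are classically presented, with the gain visibly concentrated on the interior of $B$. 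Both routes land in the same place, and your attention to the $1/\alpha$ loss in the naive annulus bound --- correctly repaired by convolving $A$ rather than all of $G$ against $\mu_T$ --- is precisely what makes the stronger hypothesis $\rho \leq c\kappa\alpha/d$ (rather than $c\kappa\alpha^2/d$) suffice.
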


\begin{proof}
For every character $\gamma \in \Delta$ we
know that $|1 - \gamma| \leq 1/2$ on $T$, and therefore 
$|\wh{\mu_T}(\gamma) - 1| 
\leq \E_T | 1 - \gamma | \leq \tfrac{1}{2}$
and $|\wh{\mu_T}(\gamma)| \geq \tfrac{1}{2}$.
Inserting this into the energy lower bound, 
we have, via Parseval,
\begin{align*}
	\tfrac{1}{4} \kappa \alpha^2 b
	&\leq \ssum_{\dual} |\wf_A|^2 |\wh{\mu}_T|^2 \\ 
	&= \langle f_A \ast \mu_T , f_A \ast \mu_T \rangle_{L^2}.
\end{align*}
Expanding this scalar product,
and with the help of Lemma~\ref{thm:reg_averaging}, we obtain
\begin{align*}
	\tfrac{1}{4} \kappa \alpha^2 b 
	&\leq \| 1_A \ast \mu_T \|_2^2
	- 2 \alpha \, \langle 1_A \ast \mu_T , 1_B \ast \mu_T \rangle_{L^2}
	+ \alpha^2 \, \langle 1_B \ast \mu_T , 1_B \ast \mu_T \rangle_{L^2} \\
	&= \| 1_A \ast \mu_T \|_2^2
	- 2\alpha b \, \langle 1_A , \mu_B \ast \mu_T \ast \mu_{-T} \rangle_{L^2}
	+ \alpha^2 b \, \langle 1_B , \mu_B \ast \mu_T \ast \mu_{-T} \rangle_{L^2} \\
	&= \| 1_A \ast \mu_T \|_2^2 
	- \big( 1 + O\big( \tfrac{\rho d}{\alpha} \big) \big) \alpha^2 b.
\end{align*}
Choosing $\rho \leq c \kappa \alpha / d$, we have then
\begin{align*}
	( 1 + 2^{-3} \kappa) \alpha^2 b
	&\leq \| 1_A \ast \mu_T \|_2^2 \\
	&\leq \| 1_A \ast \mu_T \|_\infty \| 1_A \ast \mu_T \|_1 \\
	&= \| 1_A \ast \mu_T \|_\infty \cdot \alpha b.
\end{align*}
Dividing both sides by $\alpha b$ concludes the proof.
\end{proof}

As usual this may be combined with 
a statement on the local annihilation of the large spectrum,
such as Proposition~\ref{thm:local_specann},
to recover an $L^2$-density increment lemma.

\begin{proposition}[$L^2$ density-increment]
\label{thm:roth_L2incr}
	Let $\kappa,\eta \in (0,1]$ be parameters.
	Suppose that $\mB$, $\dmB$ are Bourgain systems
	and $\mB$ is regular,
	$A$ is a subset of $B$ of relative density $\alpha$ and
	$X$ is a subset of $\dot{B}$ of relative density $\tau$.
	Assume also that $\dmB \leq_{\rho} \mB$ 
	with $\rho \leq c\kappa\alpha / d$ and write $f_A = 1_A - \alpha 1_B$.
	Then if
	\begin{align*}
		\sum_{\Spec_\eta(\mu_X)} |\wf_A|^2 \geq \kappa\alpha^2 b,
	\end{align*}
	there exists an $m$-controlled Bourgain system $\wmB$ such that
	\begin{align*}
		\cmB &= \dmB_{c/\dot{d^2} m} \wedge \wmB 
		\quad\text{is regular}, \\
		m &\leq C \eta^{-2} \ell(\tau), \\
		\| 1_A \ast \mu_{\cB} \|_\infty &\geq ( 1 + 2^{-3} \kappa ) \alpha . 
	\end{align*}
\end{proposition}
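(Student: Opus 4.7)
The plan is to combine Proposition~\ref{thm:local_specann} with Lemma~\ref{thm:roth_L2tocorrel}, choosing the annihilating set $T$ in the latter to be the $B$-ball of the Bourgain system produced by the former. First I would apply Proposition~\ref{thm:local_specann} with annihilation parameter $\nu = \tfrac{1}{2}$ to the set $X$, which has relative density $\tau$ in $\dot{B}$. This produces an integer $m \leq C \eta^{-2} \ell(\tau)$, an $m$-controlled Bourgain system $\wmB$, and a regular Bourgain system
\begin{align*}
	\cmB = \dmB_{c / (\dot{d}^2 m)} \wedge \wmB_{1/2}
\end{align*}
that $\tfrac{1}{2}$-annihilates $\Spec_\eta(\mu_X)$. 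By Lemma~\ref{thm:bsyst_dilation}, $\wmB_{1/2}$ has dimension at most $m$ and loses only a factor $4^{-m}$ in density, so it remains $m$-controlled after a harmless increase of the constant hidden in Definition~\ref{thm:local_bsystcontrol}. Relabelling $\wmB_{1/2}$ as $\wmB$ and adjusting $c$ accordingly brings $\cmB$ into exactly the shape announced in the conclusion.

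Next I would verify the hypotheses of Lemma~\ref{thm:roth_L2tocorrel} applied with $T = \cB$ and $\Delta = \Spec_\eta(\mu_X)$. Since $\cmB \leq \dmB$ by construction and $\dmB \leq_\rho \mB$ by assumption, we have $\cB \subset \dot{B} \subset B_\rho$, and the quantitative hypothesis $\rho \leq c \kappa \alpha / d$ on the radius is given as a standing assumption. The $\tfrac{1}{2}$-annihilation of $\Spec_\eta(\mu_X)$ by $\cB$ is ensured by Proposition~\ref{thm:local_specann}, and the energy lower bound $\sum_{\Spec_\eta(\mu_X)} |\wf_A|^2 \geq \kappa \alpha^2 b$ is precisely the assumed input. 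Lemma~\ref{thm:roth_L2tocorrel} then delivers $\| 1_A \ast \mu_{\cB} \|_\infty \geq (1 + 2^{-3} \kappa) \alpha$, which is the desired density increment.

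No genuine obstacle is anticipated: the argument is essentially a direct composition of the two cited results. The only mild bookkeeping lies in absorbing the dilation factor $\nu = \tfrac{1}{2}$ into $\wmB$ without losing the $m$-control property, which is immediate from Lemma~\ref{thm:bsyst_dilation}, and in reshuffling the absolute constants appearing in the dilation parameter $c / (\dot{d}^2 m)$ so that the output system precisely matches the form displayed in the statement.
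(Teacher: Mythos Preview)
Your proposal is correct and follows essentially the same route as the paper: apply Proposition~\ref{thm:local_specann} with $\nu=\tfrac{1}{2}$ to $X\subset\dot B$, absorb the dilation $\wmB_{1/2}$ into $\wmB$ via Lemma~\ref{thm:bsyst_dilation} (noting it stays $O(m)$-controlled), and then feed $T=\cB$ and $\Delta=\Spec_\eta(\mu_X)$ into Lemma~\ref{thm:roth_L2tocorrel}. The bookkeeping you flag is exactly the bookkeeping the paper carries out.
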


\begin{proof}
By Proposition~\ref{thm:local_specann}, 
$\Spec_\eta (\mu_X)$
is $\tfrac{1}{2}$-annihilated
by a regular Bourgain system of the form
$\cmB = \dmB_{c\dot{d}^2/m} \wedge \wmB$,
where $\wmB = \wmB'_{1/2}$ and
$\wmB'$ is an $m'$-controlled Bourgain system
with $m' \leq C \eta^{-2} \ell(\tau)$.
Note that by Lemma~\ref{thm:bsyst_dilation},
$\wmB$ is $O(m')$-controlled.
Applying then Lemma~\ref{thm:roth_L2tocorrel} with 
$\Delta = \Spec_\eta (\mu_X)$ and $T = \cmB \leq \dmB$
concludes the proof.
\end{proof}

We now take a big step forward and claim
that the following analog
of~\cite[Lemma~6.2]{sandersroth} holds.
This involves a careful examination of
the argument of~\cite{sandersroth}, 
and we regret imposing the double-checking
process below on the reader,
however past this point our argument is 
again self-contained.

\begin{proposition}[Iterative lemma on two scales]
\label{thm:roth_it2scales}
	Suppose that $\mB$, $\mB'$ are regular Bourgain systems,
	$A$ is a subset of $B$ of relative density $\alpha$
	and $A'$ is a subset of $B'$ of relative density $\alpha'$.
	Assume also that $\mB' \leq_\rho \mB$ with $\rho \leq c \alpha / d$.
	Then either
	\begin{enumerate}
		\item
		(Many three-term arithmetic progressions)
		\begin{align*}
				\langle 1_A \ast 1_{A'} , 1_{-A} \rangle_{L^2} \geq 
			\exp\big[ - C \alpha^{-1} \ell(\alpha') - C d' \ell(\alpha'/d') \big] bb' ,
		\end{align*}
		\item
		(Density increment)
		\newline
		there exists an $m$-controlled Bourgain system $\wmB$ with
		\begin{align*}
			\cmB &= \mB'_{(\alpha\alpha'/2d')^C} \wedge \wmB \quad\text{regular}, \\
			m &\leq C \alpha^{-1} \ell(\alpha)^3 \ell(\alpha'), \\
			\| 1_A \ast \mu_{\cB} \|_\infty &\geq ( 1 + 2^{-13} ) \alpha .
		\end{align*}
	\end{enumerate}
\end{proposition}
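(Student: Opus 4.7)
My plan is to follow the proof of \cite[Lemma~6.2]{sandersroth} line by line, replacing Bohr sets by regular Bourgain systems throughout and substituting the spectral tools of Section~\ref{sec:local} for their Bohr-set analogues. The overall skeleton is a standard dichotomy between the ``structured'' and ``random'' parts in a Fourier expansion of the triple correlation.

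Concretely, one writes $1_A = \alpha 1_B + f_A$ and expands, via Parseval,
\begin{align*}
    \langle 1_A \ast 1_{A'}, 1_{-A}\rangle_{L^2}
    = \sum_{\gamma \in \dual} |\widehat{1_A}(\gamma)|^2 \widehat{1_{A'}}(\gamma).
\end{align*}
The contribution of the structured part $\alpha 1_B$ is controlled using the nesting $\mB' \leq_\rho \mB$ with $\rho \leq c\alpha/d$ together with the regularity estimate of Lemma~\ref{thm:reg_averaging}; if this main term is already comparable to the quantity in case~(i), we are done. Otherwise the oscillatory remainder
\begin{align*}
    \sum_{\gamma \in \dual} |\widehat{f_A}(\gamma)|^2 \widehat{1_{A'}}(\gamma)
\end{align*}
must be comparably large, and a dyadic decomposition of $|\widehat{1_{A'}}|$ then yields a threshold $\eta$ at which a significant portion of the $\ell^2$-mass of $\widehat{f_A}$ concentrates on $\Spec_\eta(\mu_{A'})$. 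Feeding this energy estimate into Proposition~\ref{thm:roth_L2incr}, with $\dmB$ taken to be a suitable dilate of $\mB'$ (of radius roughly $(\alpha\alpha'/d')^C$ to meet the nesting hypothesis), produces the $m$-controlled Bourgain system $\wmB$ and the density increment of case~(ii).

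The main obstacle is that the direct execution of this dichotomy yields too weak a dimension bound to match the claimed $m \leq C\alpha^{-1}\ell(\alpha)^3\ell(\alpha')$: one recovers only $m$ of order $\alpha^{-2}$ times logarithmic factors, which is the strength of the earlier approach in \cite{sanders3aps}. The crucial saving from $\alpha^{-2}$ to $\alpha^{-1}$ is the novelty of \cite{sandersroth} and arises from an iterated spectral-boosting scheme that enriches the large spectrum three times, paying one factor of $\ell(\alpha)$ per iteration --- whence the $\ell(\alpha)^3$. Reproducing this boosting in the Bourgain-system framework is essentially a bookkeeping exercise, since the key inputs (Lemma~\ref{thm:local_chang}, Lemma~\ref{thm:local_dissocann} and Proposition~\ref{thm:local_specann}) were formulated precisely to accommodate it; the only care required is in tracking the interplay between the dimensions $d$, $d'$ and the intersection and dilation parameters from Lemmas~\ref{thm:bsyst_dilation}~and~\ref{thm:bsyst_intersection}, to ensure that the final Bourgain system $\cmB$ has exactly the form $\mB'_{(\alpha\alpha'/2d')^C} \wedge \wmB$ stated in the conclusion.
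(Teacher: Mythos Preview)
Your proposal is correct and matches the paper's approach: both argue that Proposition~\ref{thm:roth_it2scales} follows by transporting the proof of \cite[Lemma~6.2]{sandersroth} verbatim to the Bourgain-system setting, replacing each invocation of the Bohr-set energy-increment lemma by Proposition~\ref{thm:roth_L2incr} and noting that regularity (Lemma~\ref{thm:reg_averaging}) is all that is used elsewhere. The paper's proof is slightly more explicit than yours in one respect: rather than sketching the Parseval dichotomy and then pointing to the spectral-boosting as a black box, it tracks the two distinct applications of the $L^2$ increment inside \cite{sandersroth} (in \cite[Lemma~4.2]{sandersroth} with $\eta\asymp\alpha^{1/2}$, and in \cite[Corollary~5.2]{sandersroth} with $\eta\asymp 1$ but exponentially small $\tau$) and reads off the resulting $m$-bounds directly, which is how the precise form $m\leq C\alpha^{-1}\ell(\alpha)^3\ell(\alpha')$ is extracted.
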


\begin{proof}
This is obtained by replacing each occurence of
the energy-increment lemma~\cite[Lemma~3.8]{sandersroth}
for Bohr sets by its Bourgain system counterpart, 
viz.~Proposition~\ref{thm:roth_L2incr}.
Essentially two types of $L^2$ density-increment
appear in Sanders' argument, 
and we now describe them,
using the notation of Proposition~\ref{thm:roth_L2incr}.
In every application of~\cite[Lemma~3.8]{sandersroth}
the Bourgain system $\dmB$
is (eventually) a dilate of the Bourgain system $\mB$
by a factor $(\alpha\alpha'/2d')^{O(1)}$,
and therefore we only need determine the parameters 
$\kappa,\eta,\tau$.

The first type of $L^2$ density-increment appears
in the proof of~\cite[Lemma~4.2]{sandersroth}
on p.~626 with parameters
$\kappa \asymp 1$,
$\eta \asymp \alpha^{1/2}$,
$\tau \gg \alpha'$,
so that $m \leq C\alpha^{-1} \ell(\alpha')$
upon applying Proposition~\ref{thm:roth_L2incr}.
The same density-increment is featured in
\cite[Proposition~4.1]{sandersroth} which is 
just an iteration of the previous lemma.

A second type of density-increment
arises in the proof of~\cite[Corollary~5.2]{sandersroth} on pp.~630--632
which involves certain densities $\sigma$ and $\lambda$,
and which features parameters 
$\kappa \asymp \lambda$,  $\eta \asymp 1$, 
\begin{align*}
	\tau \geq \exp[ - C \lambda^{-2} \ell(\sigma) \ell( \lambda\alpha )^2 \ell (\alpha) ]
	\quad\text{so that}\quad
	 m \leq C \lambda^{-2} \ell(\sigma) \ell( \lambda\alpha )^2 \ell (\alpha)
\end{align*}
upon applying Proposition~\ref{thm:roth_L2incr}.
This is finally combined with \cite[Proposition~4.1]{sandersroth}
on p.~633 to obtain \cite[Lemma~6.2]{sandersroth},
to the effect that we either have an $L^2$ density-increment
of the first type, or of the second type with
$\lambda \asymp 1$ and $\sigma \geq \exp[ - C \alpha^{-1} \ell(\alpha')]$,
and therefore such that $\kappa \asymp 1$
and $m \leq C \alpha^{-1} \ell(\alpha)^3 \ell(\alpha')$
in the application of Proposition~\ref{thm:roth_L2incr}.
Choosing $\mB'' = \mB'_{c\alpha'/d'}$ 
in (the Bourgain system version of)~\cite[Lemma~6.2]{sandersroth} 
and using Lemma~\ref{thm:bsyst_dilation}, we obtain
an alternative case (i) of the desired shape.

Since, by Lemma~\ref{thm:reg_averaging},
Bourgain systems satisfy the same regularity estimates as Bohr sets,
we may replace the latter by the former
and apply Proposition~\ref{thm:roth_L2incr} 
everywhere as claimed, thereby 
obtaining the desired iterative lemma.
Finally, the constant $2^{-13}$ may be extracted 
from~\cite{sandersroth} although its precise value is unimportant;
it is just convenient to write down an explicit value for
later computations.
\end{proof}

At this point we recall a simple technique,
originating in Bourgain's proof of Roth's 
theorem~\cite[(5.13)--(5.18)]{bourgainroth},
which allows one to pass from two scales to one
in iterative statements. 

\begin{lemma}
\label{thm:roth_modelling}
	Let $\theta \in (0,1]$ be a parameter.
	Suppose that $\mB$, $\mB'$, $\mB''$ are Bourgain systems,
	$\mB$ is regular and $A$ is a subset of $B$ 
	of relative density $\alpha$.
	Assume also that $\mB' \leq_\rho \mB$
	and $\mB'' \leq_\rho \mB$ with $\rho \leq c \theta \alpha / d$.
	Then either
	\begin{align*}
		\max\big( \| 1_A \ast \mu_{B'} \|_\infty ,  
		\| 1_A \ast \mu_{B''} \|_\infty \big)
		\geq \big( 1 + \tfrac{\theta}{2} \big) \alpha
	\end{align*} 
	or there exists $x$ such that
	$ 1_A \ast \mu_{B'}(x) \geq (1 - \theta) \alpha$
	and
	$ 1_A \ast \mu_{B''}(x) \geq (1 - \theta) \alpha$. 
\end{lemma}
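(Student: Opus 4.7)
The plan is a short averaging-and-pigeonhole argument on $B$. Suppose that the density-increment alternative fails, so that $1_A \ast \mu_{B'}$ and $1_A \ast \mu_{B''}$ are both strictly bounded above by $(1+\theta/2)\alpha$ pointwise; the task reduces to producing a single $x \in B$ at which both convolutions are at least $(1-\theta)\alpha$.

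The first step is to show that the average of each of these convolutions over $B$ is very close to $\alpha$. Using the symmetry of $B'$ to write
\begin{align*}
	\E_{x \in B}[1_A \ast \mu_{B'}(x)]
	= \langle 1_A \ast \mu_{B'}, \mu_B \rangle_{L^2}
	= \langle 1_A, \mu_B \ast \mu_{B'} \rangle_{L^2},
\end{align*}
and observing that the hypothesis $\mB' \leq_\rho \mB$ forces $\Supp(\mu_{B'}) \subset B_\rho$, Lemma~\ref{thm:reg_averaging} yields $\|\mu_B \ast \mu_{B'} - \mu_B\|_{L^1} \leq C_1 \rho d$. Combining this with $\langle 1_A, \mu_B \rangle_{L^2} = \alpha$ and with the hypothesis $\rho \leq c\theta\alpha/d$ for $c$ sufficiently small gives $\E_{x \in B}[1_A \ast \mu_{B'}(x)] \geq \alpha(1 - \theta/8)$, and symmetrically for $B''$.

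The second step is to split this average over $B$ according to the subset $S' = \{x \in B : 1_A \ast \mu_{B'}(x) < (1-\theta)\alpha\}$. Writing $p' = |S'|/|B|$ and using the assumed pointwise upper bound $(1+\theta/2)\alpha$ on the complement of $S'$, I obtain
\begin{align*}
	\alpha\bigl(1 - \tfrac{\theta}{8}\bigr)
	\leq (1-\theta)\alpha\, p' + \bigl(1+\tfrac{\theta}{2}\bigr)\alpha\,(1-p'),
\end{align*}
which rearranges to $p' \leq 5/12 < 1/2$. Defining $S''$ analogously yields $p'' < 1/2$, and hence $|S' \cup S''| < |B|$. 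Any $x \in B \smallsetminus (S' \cup S'')$ then satisfies both required lower bounds simultaneously.

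The only delicate point is the numerology: the constant $c$ in the hypothesis $\rho \leq c\theta\alpha/d$ must be chosen small enough (of order $1/C_1$) that the regularity error $C_1 \rho d$ sits strictly below $\theta\alpha$ by a definite margin, which is precisely what forces each of $p'$ and $p''$ to be bounded away from $1/2$ and hence their union to be strictly smaller than $B$. This is the only place where the hypothesis on $\rho$ enters, and the argument is otherwise entirely routine.
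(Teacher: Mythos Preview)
Your proof is correct and follows essentially the same approach as the paper: both use the regularity estimate (Lemma~\ref{thm:reg_averaging}) to show that the averages of $1_A \ast \mu_{B'}$ and $1_A \ast \mu_{B''}$ over $B$ are close to $\alpha$, and then pigeonhole to locate a good point. The only cosmetic difference is in the pigeonhole step: the paper averages the \emph{sum} $1_A \ast \mu_{B'} + 1_A \ast \mu_{B''}$ over $B$ to find a single $x$ where this sum is at least $(2-\tfrac{\theta}{2})\alpha$, and then subtracts the assumed upper bound $(1+\tfrac{\theta}{2})\alpha$ on one term to get the lower bound $(1-\theta)\alpha$ on the other; you instead bound the densities of the two ``bad'' sets $S',S''$ separately and take $x$ outside their union.
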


\begin{proof}
A quick regularity computation via Lemma~\ref{thm:reg_averaging} yields
\begin{align*}
	\E_B ( 1_A \ast \mu_{B'} + 1_A \ast \mu_{B''} )
	&= \langle 1_A , \mu_B \ast \mu_{B'} \rangle
	+ \langle 1_A , \mu_B \ast \mu_{B''} \rangle \\
	&= 2\alpha + O( \rho d ) \\
	&\geq ( 2 - \tfrac{\theta}{2} )\alpha
\end{align*}
provided that $\rho \leq c \theta \alpha / d$.
By the pigeonhole principle, there exists $x \in G$ such that
\begin{align*}
	1_A \ast \mu_{B'}(x) + 1_A \ast \mu_{B''}(x) \geq ( 2 - \tfrac{\theta}{2} ) \alpha .
\end{align*}
Assuming that we are not in the first case of the lemma,
we have
\begin{align*}
	1_A \ast \mu_{B'}(x) 
	\geq ( 2 - \tfrac{\theta}{2} ) \alpha - ( 1 + \tfrac{\theta}{2} ) \alpha
	= ( 1 - \theta ) \alpha
\end{align*} 
and similarly for $1_A \ast \mu_{B''} (x)$.
\end{proof}

With this technique in hand, we may modify
Proposition~\ref{thm:roth_it2scales}
so as to make the iteration easier to perform.
Once this is done, Proposition~\ref{thm:roth_localroth}
is derived by a standard, yet computationally intensive 
iterative process.
For this argument to work however,
we need to make the assumption 
that the set $A$ contains no degenerate
arithmetic progressions at each step
of the iteration.

\begin{proposition}[Final iterative lemma]
\label{thm:roth_it1scale}
	Suppose that $G$ has odd order,
	$\mB$ is a regular Bourgain system, and
	$A$ is a subset of $B$ of relative density $\alpha$
	such that $A - A$ contains no element of order $2$.
	Then either
	\begin{enumerate}
		\item
		(Many three-term arithmetic progressions)
		\begin{align*}
			\langle 1_A \ast 1_A , 1_{2 \cdot A} \rangle_{L^2} \geq 
			\exp\big[ - C \alpha^{-1} \ell(\alpha) - C d \ell(\alpha/d) \big] \cdot b^2 ,
		\end{align*}
		\item
		(Density increment)
		\newline
		there exist Bourgain systems $\hmB$, $\wmB$ 
		and an element $u \in \{1,-2\}$ such that
		\begin{align*}
			&& \cmB &= \hmB \wedge \wmB \ \ \text{is regular}, && \\
			&& \hmB &= u \cdot \mB_{(\alpha/2d)^C}, &
			\wh{b} &\geq \exp\big[ - C d \ell(\alpha/d) \big] \cdot b, \\
			&& \wt{d} &\leq C \alpha^{-1} \ell(\alpha)^4,  &
			\wt{b} &\geq \exp[ - C \alpha^{-1} \ell(\alpha)^5 ], \\
			&& \| 1_A \ast \mu_{\cB} \|_\infty &\geq ( 1 + 2^{-16} ) \alpha. &&&&
		\end{align*}
	\end{enumerate}
\end{proposition}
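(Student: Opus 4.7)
The plan is to derive Proposition~\ref{thm:roth_it1scale} from the two-scale iterative lemma (Proposition~\ref{thm:roth_it2scales}) via the two-scales-to-one device of Lemma~\ref{thm:roth_modelling}. The dichotomy $u \in \{1,-2\}$ in the conclusion reflects that, under the odd-order hypothesis, the 3AP count may be written either as $\langle 1_A \ast 1_A, 1_{2\cdot A}\rangle_{L^2}$ or equivalently as $\langle 1_A \ast 1_{-2\cdot A}, 1_{-A}\rangle_{L^2}$, so Proposition~\ref{thm:roth_it2scales} will be applied in two complementary directions to cover both shapes.

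First I would fix an inner Bourgain system $\mB^* = \mB_{\rho_0}$ with $\rho_0 = (\alpha/2d)^C$ for a large constant $C$, regularized via Lemma~\ref{thm:reg_regularizing}. Lemma~\ref{thm:bsyst_dilation} gives $\dim \mB^* \leq d$ and density $b^* \geq \exp[-Cd\ell(\alpha/d)]\cdot b$; by Lemma~\ref{thm:bsyst_homimage} combined with the bijectivity of $-2\cdot$ on $G$, the system $-2\cdot \mB^*$ inherits the same bounds. In particular $u\cdot \mB^* \leq_{\rho_0} u\cdot \mB$ for each $u \in \{1,-2\}$. A pigeonhole over translates, justified by Lemma~\ref{thm:reg_averaging} applied to the regular Bourgain system $u\cdot \mB$, then supplies for each $u$ a shift $t_u$ such that $A_u := (u\cdot A - t_u) \cap (u\cdot B^*)$ has relative density $\alpha_u \geq (1-2^{-20})\alpha$ in $u\cdot B^*$.

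For each $u \in \{1,-2\}$, I would apply Proposition~\ref{thm:roth_it2scales} with outer $(u\cdot A, u\cdot \mB)$ and inner $(A_u, u\cdot \mB^*)$. This returns either a Schur-triple lower bound $\langle 1_{u\cdot A} \ast 1_{A_u}, 1_{-u\cdot A}\rangle_{L^2} \geq \exp[-C\alpha^{-1}\ell(\alpha) - Cd\ell(\alpha/d)] \cdot b b^*$, or a density increment of $u\cdot A$ on $(u\cdot \mB^*)\wedge \wmB_u$ with $\wmB_u$ being $O(\alpha^{-1}\ell(\alpha)^4)$-controlled. In the density-increment case, the bijectivity of $u\cdot$ transports the increment to $A$ on $(u\cdot \mB^*)\wedge\wmB_u$, yielding exactly the $u$-alternative of case~(ii) of the conclusion; the bounds $\wt{d}_u \leq C\alpha^{-1}\ell(\alpha)^4$ and $\wt{b}_u \geq \exp[-C\alpha^{-1}\ell(\alpha)^5]$ follow from Proposition~\ref{thm:roth_it2scales}'s estimate $m \leq C\alpha^{-1}\ell(\alpha)^3\ell(\alpha_u)$ combined with the $m$-controlled density $\exp[-Cm\log m]$.

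If neither application produces a density increment, the two Schur-triple bounds must be combined to yield many 3APs. The key point is that unravelling the $u=-2$ bound — writing $a_1 = -2\cdot b_1$, $a_3 = -2\cdot b_3$, $a_2 = -2\cdot b - t_{-2}$ and using $a_1 + a_2 + a_3 = 0$ — produces a count of triples $(b_1, b, b_3) \in A^3$ with $b_1 + b_3 - 2\cdot b$ in a translate of a small set, which, after absorbing $t_{-2}$ via the common-translate conclusion of Lemma~\ref{thm:roth_modelling} (applied to $\mB^*$ so as to align $t_1$ and $t_{-2}$ at a common $x$) and invoking the injectivity of $y \mapsto 2\cdot y$ on $A$ guaranteed by the hypothesis that $A-A$ contains no element of order $2$, becomes precisely the 3AP count $\langle 1_A \ast 1_A, 1_{2\cdot A}\rangle_{L^2}$. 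The main technical obstacle is this alignment step: ensuring that the pigeonhole translates $t_1, t_{-2}$ can be consistently absorbed so that the resulting count really measures 3APs in $A$ and not merely a shifted analogue; the modelling losses incurred here are what force the density-increment constant to degrade from the $2^{-13}$ of Proposition~\ref{thm:roth_it2scales} to the slightly weaker $2^{-16}$ in the conclusion.
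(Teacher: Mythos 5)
Your proposal misidentifies where the $u\in\{1,-2\}$ dichotomy comes from, and this causes a concrete breakdown in the step that is supposed to produce three-term arithmetic progressions. In the paper's argument, Proposition~\ref{thm:roth_it2scales} is applied \emph{once}, with the outer set $A'=(A-x)\cap B'$ left undilated and only the inner set $\hA''=-2\cdot\big((A-x)\cap B''\big)$ dilated by $-2$, both sitting at the same translate $x$ supplied by a single application of Lemma~\ref{thm:roth_modelling}. The asymmetry between the outer (undilated) and inner (dilated by $-2$) scales is precisely what makes the Schur-triple functional unfold into the 3AP equation: if $a',a'''\in A-x$ and $-a'-a'''\in\hA''\subset 2x-2\cdot A$, then $a_1+a_3=2\cdot a_2$. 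The value $u=-2$ in case~(ii) of the statement records that the density increment occurs on the inner system $-2\cdot\mB''_\star\wedge\wmB$, and $u=1$ records the alternative density increment on $\mB'$ or $\mB''$ already supplied by case~(i) of Lemma~\ref{thm:roth_modelling}; it does not reflect two separate runs of Proposition~\ref{thm:roth_it2scales}.

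Your version dilates \emph{both} scales by $u$: outer $u\cdot A$ in $u\cdot\mB$ and inner $A_u\subset u\cdot B^*$. This cancels the asymmetry and the Schur bound you extract for either value of $u$ counts solutions of $a_1+a_2+a_3=\mathrm{const}$, not of $a_1+a_3=2\cdot a_2$. Concretely, for $u=-2$ the pairs counted satisfy $-2a_1+(-2a_2-t_{-2})+(-2a_3)=0$, i.e.\ $a_1+a_2+a_3=-t_{-2}/2$, and your attempted unravelling ``$b_1+b_3-2\cdot b$ in a translate of a small set'' is not what actually comes out. Relatedly, you choose the translates $t_1,t_{-2}$ by independent pigeonholes and then propose to retroactively ``align'' them using Lemma~\ref{thm:roth_modelling}; this is not how that lemma is used, and the two translates live in different groups ($\mB$ versus $-2\cdot\mB$), so alignment is not a matter of a lost constant but a structural obstruction. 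The fix is the paper's construction: apply Lemma~\ref{thm:roth_modelling} once to $\mB,\mB'=\mB_{c\alpha/d},\mB''=\mB'_{c'\alpha/d}$ to get $A',A''$ dense at a \emph{common} translate, form $\hA''=-2\cdot A''$, verify $\hmB''\leq_\rho\mB'$ via $-2\cdot B'_\rho\subset B'_{2\rho}$, and then make a single application of Proposition~\ref{thm:roth_it2scales} to $(A',\mB')$ and $(\hA'',\hmB'')$. Your bookkeeping of $\wt d$ and $\wt b$ would be fine once the structure is corrected.
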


\begin{proof}
Let $\theta = 2^{-15}$ and define
regular Bourgain systems $\mB' = \mB_{c\alpha/d}$ and $\mB'' = \mB'_{c'\alpha/d}$
with the help of Lemma~\ref{thm:reg_regularizing}.
Now apply Lemma~\ref{thm:roth_modelling} 
to $A$ and $\mB,\mB',\mB''$:
in the first case of that lemma, 
we are in the second case of the proposition,
while in the second case
we may find an element $x$ such that
$A' \coloneqq (A-x) \cap B' $ has relative
density $\alpha' \geq (1 - 2^{-15}) \alpha$ in $B'$,
and $A'' \coloneqq (A-x) \cap B''$
has relative density at least
$\tfrac{1}{2} \alpha$ in $B''$;
the latter weak bound suffices for our purposes.

We let $\hA'' = -2 \cdot A''$ and $\hmB'' = - 2 \cdot \mB''$, 
so that from the injectivity of $y \mapsto 2 \cdot y$ on $A''$
and the bound $|\hB''| \leq |B''|$,
we deduce that $\hA''$ has density 
at least $\tfrac{1}{2} \alpha$ in $\hB''$.
Furthermore, by Lemma~\ref{thm:bsyst_homimage}, 
we see that $\hmB''$ is a Bourgain system of
dimension at most $d''$ and, 
since $\hB''$ contains $\hA''$,
of density at least $\tfrac{1}{2} \alpha b''$.
Observe finally that 
with these choices of $A'$ and $\hA''$, we have
\begin{align}
\label{eq:roth_innerproduct}
	\langle 1_A \ast 1_A , 1_{2 \cdot A} \rangle_{L^2}
	= \langle 1_{A-x} \ast 1_{2x-2 \cdot A} , 1_{x-A} \rangle_{L^2}
	\geq \langle 1_{A'} \ast 1_{\smash{\hA''}} , 1_{-A'} \rangle_{L^2}.
\end{align}

We now apply Proposition~\ref{thm:roth_it2scales} to
the sets $A'$ and $\hA''$, 
located respectively in $B'$ and $\hB''$.
In the first case of that proposition,
it follows from~\eqref{eq:roth_innerproduct} and
Lemma~\ref{thm:bsyst_dilation} that 
we are in the first case of the proposition we seek to prove.
In the second case of Proposition~\ref{thm:roth_it2scales}, 
we obtain a regular Bourgain system 
$\cmB = \hmB \wedge \wmB$ where
\begin{align*}
	\hmB 
	= ( - 2 \cdot \mB'' )_{(\alpha/2d)^C} 
	= - 2 \cdot \mB''_{(\alpha/2d)^C}
	= - 2 \cdot \mB_{(\alpha/2d)^{C'}}
\end{align*}
and $\wmB$ is $C\alpha^{-1}\ell(\alpha)^4$-controlled, and such that
\begin{align*}
	\| 1_{A} \ast \mu_{\cB} \|_\infty 
	\geq \| 1_{A'} \ast \mu_{\cB} \|_\infty 
	\geq ( 1 + 2^{-13} ) \alpha' 
	\geq ( 1 + 2^{-14} ) \alpha .
\end{align*}
Applying Lemma~\ref{thm:bsyst_dilation}
to $\hmB = \hmB''_{(\alpha/2d)^C}$, recalling
that $\wh{b}'' \geq \tfrac{1}{2}\alpha b''$, 
and via Definition~\ref{thm:local_bsystcontrol}, 
we conclude that we are in the second case of the
proposition that we intend to prove.
\end{proof}

\textit{Proof of Proposition~\ref{thm:roth_localroth}.}
We construct iteratively sequences of subsets $A_i$
of regular Bourgain systems $\mB^{(i)}$ of density $\alpha_i$,
such that $A_i$ is contained in a translate of $A$.
Since $A_i - A_i$ is a subset $A - A$, it does not 
contain any element of order~$2$ either.
We initiate the iteration with 
$A_1 = A$ and $\mB^{(1)} = \mB$.

At each step we apply Proposition~\ref{thm:roth_it1scale}
to the set $A_i$, and in the first case of
that proposition we stop the iteration,
while in the second case we 
let $\mB^{(i+1)} = \cmB^{(i)}$
with the notation from there,
and we pick $x_i$ and
$A_{i+1} = ( A_i - x_i ) \cap \cB^{(i)}$
so that $A_{i+1}$ has relative density
$\alpha_{i+1} = \| 1_{A_i} \ast \mu_{ \smash{ \cB^{(i)} } } \|_\infty$ in $\cB^{(i)}$. 

Since $\alpha_{i+1} \geq (1+c) \alpha_i$ whenever
$A_{i+1}$ is defined, the iteration
proceeds for a number of steps bounded by $C \ell(\alpha)$.
At each step, we obtain Bourgain systems
$\hmB^{\,(i)}$ and $\wmB^{(i)}$ and an element 
$u_i \in \{1,-2\}$ such that 
\begin{align}
\label{eq:roth_newBi}
	\mB^{(i+1)} = \hmB^{\,(i)} \wedge \wmB^{(i)}
	\quad\text{is regular},
\end{align}
and, since $\alpha_i \geq \alpha$, such that
\begin{align}
	\label{eq:roth_dimdensityhat}
	\hmB^{\,(i)} &= u_i \cdot \mB^{(i)}_{(\alpha_i/2d_i)^C}, &
	\wh{b}_i &\geq \exp\big[ \! - C d_i \ell(\alpha/d_i) \big] \cdot b_i, \\
	\label{eq:roth_dimdensitytilde}
	\wt{d}_i &\leq C \alpha^{-1} \ell(\alpha)^4, &
	\wt{b_i} &\geq \exp\big[ \! - C \alpha^{-1} \ell(\alpha)^5 \big].
\end{align}

Iterating $i-1$ times \eqref{eq:roth_newBi} 
and \eqref{eq:roth_dimdensityhat},
we obtain a Bourgain system of the form
\begin{align*}
	\mB^{(i)} = \wmB^{(i-1)}
	\wedge u_{i-1} \cdot \Big( \dots
	u_2 \cdot ( \wmB_*^{(1)} 
	\wedge u_1 \cdot \wmB_* ) \dots \big) \Big)
\end{align*}
where the stars stand for certain dilations.
This is not exactly an intersection of Bourgain
systems, however the argument used in the proof
of Lemma~\ref{thm:bsyst_intersection} is easily
adapted to show that $\mB^{(i)}$ has dimension at most
\begin{align*}
	d_i \leq 2 ( d + \wt{d}_1 + \dotsb + \wt{d}_{i-1} ) .
\end{align*}
By \eqref{eq:roth_dimdensitytilde} and since $i \leq C\ell(\alpha)$,
this yields $d_i \leq 2d + C\alpha^{-1} \ell(\alpha)^5$.

Applying Lemma~\ref{thm:bsyst_intersection}
to the intersection \eqref{eq:roth_newBi},
and with \eqref{eq:roth_dimdensityhat}
and \eqref{eq:roth_dimdensitytilde}, 
we also obtain
\begin{align*}
	b_{i+1} 
	&\geq 4^{- ( \wh{d}_i + \wt{d_i} ) } \cdot 
	\wh{b}_i \cdot \wt{b_i} \\
	&\geq \exp\big[ \! - C ( \alpha^{-1} + d ) \ell(\alpha)^5 \ell( \alpha/d ) \big] \cdot b_i .
\end{align*}
Iterating this at most $C \ell(\alpha)$ times, we obtain
\begin{align*}
	b_i \geq \exp\big[ \! - C ( \alpha^{-1} + d ) \ell(\alpha)^6 \ell(\alpha/d) \big] \cdot b .
\end{align*}
When the algorithm stops, we have therefore
\begin{align*}
	\langle 1_{A_i} \ast 1_{A_i} , 1_{2 \cdot A_i} \rangle_{L^2}
	&\geq \exp\big[ \! - C \alpha^{-1} \ell(\alpha) - C d_i \ell(\alpha/d_i) \big] \cdot b_i^2.
\end{align*}
Inserting the bounds on $d_i$ and $b_i$ in the above,
and recalling that $A_i$ is contained in a translate of $A$,
this concludes the proof.

\section{From small doubling to three-term arithmetic progressions}
\label{sec:3aps}

This section is concerned with
the proof of Theorem~\ref{thm:me_2xto3aps}
and the related Corollary~\ref{thm:me_restrictedss}.
As mentioned before, an extremely
important tool for us is the recent correlation-based
Bogolyubov-Ruzsa lemma of Sanders~\cite{sandersBR}.
In our situation, it serves to pass 
from a set of small doubling to one with
high density in a coset progression,
which is a particular type of Bourgain system. 
The local Sanders-Roth theorem of the
previous section may then be applied
to this new set, to show that it contains a 
nontrivial three-term arithmetic progression;
this is the main observation of this paper.
We now quote the main result of~\cite{sandersBR},
with a minor tweak to ensure regularity.

\begin{proposition}[Correlation Bogolyubov-Ruzsa lemma \cite{sandersBR}]
\label{thm:br_correl}
	Let $K \geq 1$ be a parameter, 
	and suppose that $A$ is a subset of $G$
	such that $|A+A| \leq K |A|$.
	Then there exists a $d$-dimensional 
	coset progression $M$ inducing 
	a regular Bourgain system and such that 
	\begin{align*}
		\| 1_A \ast \mu_M \|_{\infty} &\geq \tfrac{1}{2K}, \\
		d &\leq C (\log K)^6, \\
		|M| &\geq \exp\big[ - C(\log K)^6 (\log\log K) \big] \cdot |A| .
	\end{align*}
\end{proposition}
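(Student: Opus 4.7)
The plan is to invoke the main result of \cite{sandersBR} essentially off the shelf in order to produce a coset progression with the advertised correlation, dimension, and density bounds, and then to effect a regularization of the induced Bourgain system that loses only absolute constants and the explicit factor of~$2$ in the correlation.

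First, I would apply the correlation form of the Bogolyubov-Ruzsa lemma from \cite{sandersBR} to produce a coset progression ${M_0 = M(L_0, \omega_0, H_0)}$ of dimension $d \leq C(\log K)^6$ and cardinality ${|M_0| \geq \exp[-C(\log K)^6 \log\log K]\cdot|A|}$, together with a correlation estimate of the shape
\begin{align*}
	\langle 1_A, \mu_{M_0} \ast \nu \rangle_{L^2} \geq 1/K,
\end{align*}
where $\nu$ is an auxiliary probability measure supported in a small dilate $M(\delta L_0, \omega_0, H_0)$ of $M_0$. This is the shape of Sanders' conclusion, as illustrated in the proof of Theorem~\ref{thm:me_2xto3aps_ff}. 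Lemma~\ref{thm:cprog_systdim} then realizes $M_0$ as a Bourgain system $\mathcal{M}_0$ of dimension at most $3d$.

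Next, I would apply Lemma~\ref{thm:reg_regularizing} to pick ${\lambda \in [1/2,1]}$ such that $(\mathcal{M}_0)_\lambda$ is regular, and set $M = M(\lambda L_0, \omega_0, H_0)$, which is itself a coset progression of dimension $d$. By Lemma~\ref{thm:bsyst_dilation}, one has $|M| \geq (\lambda/2)^{3d}|M_0|$, and the dilation factor is absorbed into the exponent in the density bound after adjusting constants.

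The only delicate point is transferring the correlation from $\mu_{M_0} \ast \nu$ to $\mu_M$: naively shrinking a coset progression can destroy pointwise correlation. The natural way around this is to use Lemma~\ref{thm:reg_averaging} applied to $(\mathcal{M}_0)_\lambda$, which shows that $\mu_M \ast \nu$ stays within distance $O(\delta d)$ of $\mu_M$ in $L^1$ provided $\delta$ was selected small enough relative to $d$. Combining this with Young's inequality and the correlation above yields $\|1_A \ast \mu_M\|_\infty \geq 1/(2K)$, which is the desired bound. The main obstacle is the bookkeeping of scales, making sure the support parameter of $\nu$ from Sanders' argument is taken below $c/d$ so that the regularity estimate for $(\mathcal{M}_0)_\lambda$ applies; this is a routine adjustment of the kind familiar from Bourgain's density-increment manoeuvres and does not require any new ideas beyond those already in \cite{sandersBR}.
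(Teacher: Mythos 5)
Your proposal correctly identifies the delicate point (transferring the correlation after regularizing), but the proposed fix does not actually close the gap, and this is precisely what forces the paper to take a more intrusive route.

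Concretely: after applying Sanders' theorem you have a correlation involving $\mu_{M_0}$ (convolved with whatever auxiliary probability measures appear), and from this one readily deduces $\|1_A \ast \mu_{M_0}\|_\infty \geq 1/(2K)$ by Young's inequality. You then pass to $M = M_{0,\lambda}$ with $\lambda \in [1/2,1]$ chosen via Lemma~\ref{thm:reg_regularizing}, and claim that Lemma~\ref{thm:reg_averaging} applied to $\mathcal{M} = (\mathcal{M}_0)_\lambda$ lets you transfer the bound to $\mu_M$. But Lemma~\ref{thm:reg_averaging} only controls $\|\mu_M \ast \nu - \mu_M\|_{L^1}$, which relates $\mu_M \ast \nu$ to $\mu_M$; it says nothing whatsoever about how $\mu_{M_0}$ and $\mu_M$ compare, and that is the comparison you actually need. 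The two normalized measures differ pointwise on $M$ by a factor $|M_0|/|M|$, and on the complement of $M$ in $M_0$ they are incomparable, so no $L^1$ regularity estimate on $\mathcal{M}$ bridges this. Moreover, $M$ is not a small dilate of $M_0$ (the ratio $1/\lambda - 1$ can be as large as $1$), so a regularity estimate on $\mathcal{M}_0$ would not apply either. Growing instead of shrinking (so that $M_0 \subset M$ and $\mu_M \geq (|M_0|/|M|)\,\mu_{M_0}$ pointwise) does give a transfer, but it costs a factor $3^{-d} = e^{-\Theta((\log K)^6)}$ in the correlation, which is exactly the kind of loss that the whole enterprise of Proposition~\ref{thm:br_correl} is designed to avoid: such a degraded density would propagate into Proposition~\ref{thm:roth_localroth} and destroy the target range of $K$.

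The paper avoids this by \emph{not} treating Theorem~10.1 of \cite{sandersBR} as a black box. Instead one follows Sanders' proof up to a point just before \cite[Lemma~10.2]{sandersBR}, where a coset progression has been produced but has not yet been coupled to the correlation estimate; at that stage one may dilate it by a bounded factor (losing only $e^{-O((\log K)^6)}$ in cardinality, which is harmless, and nothing in the correlation) and then continue the proof verbatim. This intermediate regularization is the ingredient your plan is missing; a post-hoc regularization of the output of Theorem~10.1 does not suffice.

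Two smaller remarks. First, the shape of the correlation you quote, $\langle 1_A, \mu_{M_0}\ast\nu\rangle \geq 1/K$, is an over-simplification of Sanders' conclusion (compare the $\F_3^n$ display in the proof of Theorem~\ref{thm:me_2xto3aps_ff}), but this is not the source of the problem. Second, your use of Lemma~\ref{thm:bsyst_dilation} to estimate $|M|$ is fine and matches the paper's accounting.
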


\begin{proof}
Without the regularity condition, this
is~\cite[Theorem~10.1]{sandersBR} 
with $A = S$ and $\eps = \tfrac{1}{2}$.
To obtain regularity,
one may simply follow the proof in~\cite{sandersBR},
stopping just before the application
of~\cite[Lemma~10.2]{sandersBR},
and dilating by a certain constant factor 
the coset progression $M$ obtained
at this point.
By Lemmas~\ref{thm:bsyst_dilation}
and~\ref{thm:reg_regularizing},
one may choose this constant so 
that the dilated induced Bourgain
system is regular, while losing at most
a factor $e^{-C(\log K)^6}$ in size,
and the rest of the proof goes unchanged.
\end{proof}

It is crucial for our argument that this
statement makes no assumption of density on 
the set $A$, whereas the earlier Bogolyubov-Chang-type 
lemma~\cite[Proposition~6.1]{sanders3aps} 
used by Sanders does.
In terms of bounds, we could also allow for 
$d \leq K^{1+o(1)}$ and 
${|M| \geq e^{ - C K^{1+o(1)} } |A|}$ 
in Proposition~\ref{thm:br_correl},
without affecting the quality of 
bounds in Theorem~\ref{thm:me_2xto3aps};
however we do not know of any
argument significantly simpler than that 
of~\cite{sandersBR} to obtain such estimates.

We now present the proof of Theorem~\ref{thm:me_2xto3aps},
following the usual approach of estimating
the total number of three-term arithmetic progressions, 
only to compare it later to the number of trivial ones.
Corollary~\ref{thm:me_restrictedss} then
follows by inserting the bound of Theorem~\ref{thm:me_2xto3aps}
into the argument of~\cite{sanders3aps}.

\begin{proposition}
\label{thm:3aps_many3aps}
	Let $K \geq 1$ be a parameter.
	Suppose that $A$ is a subset of $G$ 
	such that $|A+A| \leq K|A|$ and 
	$A-A$ contains no element of order $2$.
	Then
	\begin{align*}
		\langle 1_A \ast 1_A , 1_{2 \cdot A} \rangle_{L^2}
		\geq \exp\big[ \! - C K (\log K)^7 \big] 
		\cdot \mu_G(A)^2 .
	\end{align*}
\end{proposition}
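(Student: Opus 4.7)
The plan is to combine the correlation Bogolyubov-Ruzsa lemma with the local Sanders-Roth theorem. First I would apply Proposition~\ref{thm:br_correl} to obtain a coset progression $M$ of dimension $d \leq C(\log K)^6$, inducing a regular Bourgain system $\mM$, and satisfying both $\|1_A \ast \mu_M\|_{\infty} \geq 1/(2K)$ and the size bound $|M| \geq \exp[-C(\log K)^6 \log\log K] \cdot |A|$. The first inequality produces an element $x \in G$ such that the translated set $A' \coloneqq (A - x) \cap M$ has relative density $\alpha \geq 1/(2K)$ inside $M$. Since $A' - A' \subseteq A - A$, it inherits the hypothesis that it contains no element of order $2$.

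Next I would feed $A'$ into Proposition~\ref{thm:roth_localroth} applied to $\mM$. By Lemma~\ref{thm:cprog_systdim}, $\mM$ has dimension at most $3d \leq C(\log K)^6$, and its density in $G$ satisfies $b = |M|/|G| \geq \exp[-C(\log K)^6 \log\log K] \cdot \mu_G(A)$. With $\alpha^{-1} \leq 2K$, $d \ll (\log K)^6$, and $\ell(\alpha), \ell(\alpha/d) \ll \log K$, the exponent $-C(\alpha^{-1} + d) \ell(\alpha)^6 \ell(\alpha/d)$ in the conclusion of Proposition~\ref{thm:roth_localroth} simplifies to $-CK(\log K)^7$, since the $\alpha^{-1}$ term dominates $d$ in the relevant regime.

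Finally I would transfer the resulting progression count back to $A$: three-term progressions in $A'$ pull back, via the translation $y \mapsto y + x$, to three-term progressions in $A \cap (x + M) \subseteq A$, so
\begin{align*}
	\langle 1_A \ast 1_A, 1_{2 \cdot A} \rangle_{L^2}
	\geq \langle 1_{A'} \ast 1_{A'}, 1_{2 \cdot A'} \rangle_{L^2}
	\geq \exp[-CK(\log K)^7] \cdot b^2,
\end{align*}
and then $b^2 \geq \exp[-C(\log K)^6 \log\log K] \cdot \mu_G(A)^2$ absorbs into the leading exponential term to yield the stated bound. Since all the serious analytic work was done in the two invoked propositions, no substantive obstacle remains; the only delicate point is parameter bookkeeping, in particular checking that the $(\log K)^6 \log\log K$ loss from extracting the coset progression is dominated by the $K(\log K)^7$ loss from the local Roth step (which it is, as soon as $K$ exceeds a constant).
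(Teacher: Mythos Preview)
Your proposal is correct and follows exactly the same route as the paper: apply Proposition~\ref{thm:br_correl} to pass to a dense translate $A'=(A-x)\cap M$ inside a regular coset-progression system, invoke Proposition~\ref{thm:roth_localroth} there, and absorb the size and dimension losses of $M$ into the $\exp[-CK(\log K)^7]$ factor. The only differences are expository: you spell out the use of Lemma~\ref{thm:cprog_systdim} for the dimension of $\mM$ and the inheritance of the no-$2$-torsion hypothesis, both of which the paper leaves implicit.
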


\begin{proof}
Let $M$ be the coset progression given by
Proposition~\ref{thm:br_correl},
and write $\mathcal{M}$ for its induced 
regular Bourgain system.
By the correlation conclusion,
we may pick an element $x$ such that  
$A' = ( A - x ) \cap M$ has relative
density $\tfrac{1}{2K}$ in $M$.
Applying then Proposition~\ref{thm:roth_localroth}
to $A'$ and $\mM$, we obtain
\begin{align*}
	\langle 1_A \ast 1_A , 1_{2 \cdot A} \rangle_{L^2} 
	\geq 
	\langle 1_{A'} \ast 1_{A'} , 1_{2 \cdot A'} \rangle_{L^2}
	\geq
	\exp[ - C ( K + d ) 
	(\log K)^6 (\log Kd) ] 
	\cdot \mu_G(M)^2.
\end{align*}
This yields the desired estimate
upon inserting the bounds from 
Proposition~\ref{thm:br_correl}.
\end{proof}

\textit{Proof of Theorem~\ref{thm:me_2xto3aps}.}
Write $K = |A+A| / |A|$.
If $A - A$ contains an element $x-y$ of order $2$, 
we readily find a nontrivial,
degenerate arithmetic progression $(x,y,x)$ in $A$.
Otherwise, Proposition~\ref{thm:3aps_many3aps} tells us
that $A$ possesses at least
$e^{- C K (\log K)^7 } |A|^2$
three-term arithmetic progressions, while the number
of trivial ones is at most $|A|$.
By the assumption on $K$,
we are then ensured to find at least 
one nontrivial arithmetic progression in $A$.
\qed

\textit{Proof of Corollary~\ref{thm:me_restrictedss}.}
It suffices to insert the bounds of Theorem~\ref{thm:me_2xto3aps}
in the proof of~\cite[Theorem~1.5]{sanders3aps} on pp.~230--231.
\qed

\section{From small doubling to long arithmetic progressions}
\label{sec:long}

In this section we derive Theorem~\ref{thm:me_2xtolongaps},
basing ourselves on the approach of Croot et al.~\cite{CLS},
which divides roughly into three steps. 
In the first step, one produces a large, structured
set of almost periods of the convolution of the set 
$A$ under consideration with itself.
The second step is to show, by a packing argument,
that the set $A + A$ necessarily contains a translated 
copy of subset of this set of almost-periods
of a certain size.
The third step is to pick such a subset 
with basic additive structure,
such as an arithmetic progression.

The original argument of~\cite{CLS}
is based on Ruzsa's modelling lemma~\cite{ruzsamodelling},
which has no efficient equivalent for 
general abelian groups,
and therefore we need to use again 
a modelling approach based on 
the Bogolyubov-Ruzsa lemma of Sanders.
In contrast with the previous section however,
we now need a version of this lemma
that provides us with a containment 
conclusion, and for this 
we quote~\cite[Theorem~1.1]{sandersBR}.

\begin{proposition}[Containment Bogolyubov-Ruzsa lemma~\cite{sandersBR}]
\label{thm:br_cont}
	Let $K \geq 1$ be a parameter, 
	and suppose that $A$ is a subset of $G$
	such that $|A + A| \leq K |A|$.
	Then there exists a $d$-dimensional 
	coset progression $M$ contained
	in $2A - 2A$ and such that
	\begin{align*}
		d \leq C (\log K)^6  
		\quad\text{and}\quad
		|M| \geq \exp\big[ - C(\log K)^6 (\log\log K) \big] \cdot |A|.
	\end{align*}
\end{proposition}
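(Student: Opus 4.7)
The plan is to follow Sanders' proof of his Bogolyubov--Ruzsa lemma in~\cite{sandersBR}, since this proposition is a direct quote of~\cite[Theorem~1.1]{sandersBR}; I therefore only sketch the strategy rather than attempt to reprove it from scratch. The natural starting point is the classical Bogolyubov identity
\begin{align*}
	( 1_A \ast 1_{-A} \ast 1_A \ast 1_{-A} )(x)
	= \ssum_{\gamma \in \dual} |\wh{1_A}(\gamma)|^4 \gamma(x) ,
\end{align*}
which shows that $2A - 2A$ contains the Bohr set $B(\Lambda, c)$, where $\Lambda = \Spec_\eta(\mu_A)$ for some absolute $\eta \asymp 1$: the characters in $\Lambda$ contribute positively when $x$ lies in the Bohr set, while the contribution of $\dual \smallsetminus \Lambda$ is controlled by Parseval. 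The standard geometry-of-numbers argument would then convert this Bohr set into a coset progression of comparable rank and size.

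The entire difficulty lies in bounding the dimension $|\Lambda|$ by $(\log K)^6$, as opposed to the classical Chang estimate $|\Lambda| \ll K \log(|G|/|A|)$, which depends on the density of $A$ in $G$ and is therefore useless for general ambient groups. Sanders' key contribution is an iterative refinement procedure based on Croot--Sisask almost-periodicity: at each step one finds a large structured set $T$ of approximate periods of $1_A$, extracts a controlled sub-Bourgain system from $T$, and restarts the argument inside a refined ambient set. The claimed dimension bound $(\log K)^6$ and the density loss $\exp[-C(\log K)^6 \log\log K]$ both emerge from a careful accounting of the parameters across the $O(\log K)$ stages of this iteration.

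The main obstacle, and the reason~\cite{sandersBR} is a substantial paper, will be to balance the dimensional and density error budgets in the iteration so that neither blows up super-polynomially in $\log K$: each Croot--Sisask refinement pays a factor roughly $\exp[-(\log K)^{O(1)}]$ in density and costs $(\log K)^{O(1)}$ in dimension, and the compounding must be tracked sharply. A secondary technical point is the final conversion of the Bohr-like structure produced by the iteration into a genuine coset progression of the stated form, which uses standard Minkowski-type lemmas but requires quantitative control compatible with the preceding iteration.
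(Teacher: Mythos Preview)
Your proposal is correct in spirit: you recognise that this is a direct citation of~\cite[Theorem~1.1]{sandersBR} and you give a serviceable high-level sketch of Sanders' argument (Croot--Sisask almost-periodicity iterated $O(\log K)$ times, with careful bookkeeping of dimension and density losses, followed by a Bohr-to-coset-progression conversion). One small quibble: the classical Bogolyubov identity you open with is motivational rather than structural in Sanders' actual proof, which proceeds more directly through almost-periodicity of convolutions without passing through the large spectrum in that way; but since you present it only as a ``natural starting point'' before pivoting to the real mechanism, this is harmless.

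The paper, however, takes a different and much shorter route. It does not attempt to reprove or even sketch~\cite[Theorem~1.1]{sandersBR}; instead, immediately after the statement it simply remarks that, as noted in~\cite[Section~3]{sandersBR}, the containment version can be \emph{deduced} from the correlation version already quoted as Proposition~\ref{thm:br_correl}. Since Proposition~\ref{thm:br_correl} is treated as a black box earlier in the paper, this reduces the present proposition to a one-line citation. Your sketch is more informative for a reader who wants to know what is inside the black box, but the paper's approach is more economical given its internal structure, and avoids duplicating the exposition that~\cite{sandersBR} and~\cite{sandersPFR} already provide.
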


As noted in~\cite[Section~3]{sandersBR}, this version 
can be deduced from Proposition~\ref{thm:br_correl}.
The containment conclusion is sufficient in our situation, 
because the Croot-Sisask lemma works
under a doubling hypothesis, whereas the
iterative argument used in the proof of Roth's theorem
requires an assumption of density instead.
Our reason for emphasizing this point is 
that the containment version above 
is easier to obtain than the correlation one,
and is explained in depth in a survey 
by Sanders~\cite{sandersPFR}.
Although the type of structure obtained there is
different, consisting of a convex coset progression instead, 
this would not affect our argument much since
this object is also a Bourgain system, 
as can be seen from~\cite[Section~4]{sandersPFR}.

We now proceed to the proof, starting with 
the following lemma which serves to collect together certain
computations from~\cite{CLS} on 
$L^p$ and $L^{p/2}$ norms of convolutions.

\begin{lemma}
\label{thm:long_lpnorms}
	Let $p \geq 2$ and $K \geq 1$ be parameters.
	Suppose that $A$ is a subset of $G$
	such that ${|A+A| \leq K|A|}$.
	Then
	\begin{align*}
		\mu_G ( A + A )^{1/p} 
		\leq K^{1/2} \| 1_A \ast \mu_A \|_{p/2}^{1/2}
		\quad\text{and}\quad
		\| 1_A \ast \mu_A \|_{p/2}^{1/2} 
		\leq K^{1/2} \| 1_A \ast \mu_A \|_p .
	\end{align*}	
\end{lemma}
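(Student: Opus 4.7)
The plan is to exploit the fact that $f = 1_A \ast \mu_A$ is a nonnegative function with $\Supp f \subset A+A$ and $\|f\|_{L^1} = \mu_G(A)$, and to derive both inequalities as routine applications of Hölder's inequality combined with the small-doubling hypothesis.

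First I would verify these two properties: the convolution formula forces the support to lie in $A+A$, while $\|f\|_{L^1} = (\E_G 1_A)(\E_G \mu_A) = \mu_G(A)$ since $\mu_A$ is a probability measure. For the first inequality, I would start from $\mu_G(A) = \|f\|_{L^1} = \E_G f \cdot 1_{A+A}$ and apply Hölder with exponents $p/2$ and the conjugate $p/(p-2)$ to obtain
\begin{equation*}
\mu_G(A) \leq \|f\|_{L^{p/2}} \cdot \mu_G(A+A)^{1-2/p}.
\end{equation*}
Rearranging this bound and absorbing the ratio $\mu_G(A+A)/\mu_G(A)$ into $K$ via the doubling hypothesis yields $\mu_G(A+A)^{2/p} \leq K \|f\|_{L^{p/2}}$, which after taking square roots is the first claim.

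For the second inequality, I would apply Hölder a second time, now to
\begin{equation*}
\|f\|_{L^{p/2}}^{p/2} = \E_G |f|^{p/2} \cdot 1_{A+A},
\end{equation*}
pairing $|f|^{p/2}$ in $L^2$ with $1_{A+A}$ in $L^2$ to obtain $\|f\|_{L^{p/2}} \leq \|f\|_{L^p}\,\mu_G(A+A)^{1/p}$. Substituting the first inequality into the factor $\mu_G(A+A)^{1/p}$ makes the bound self-referential in $\|f\|_{L^{p/2}}^{1/2}$, and dividing through by this factor produces the second claim. There is no real obstacle here; the only mild subtlety is the exponent bookkeeping, and the two halves combine neatly because the first estimate bounds $\mu_G(A+A)^{1/p}$ by precisely the shape $K^{1/2} \|f\|_{L^{p/2}}^{1/2}$ that the second inequality requires.
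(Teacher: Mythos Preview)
Your proof is correct and follows essentially the same route as the paper: H\"older with exponents $p/2$ and $p/(p-2)$ for the first inequality, then Cauchy--Schwarz (your $L^2$--$L^2$ H\"older) on $|f|^{p/2}\cdot 1_{A+A}$ for the second, followed by substituting the first bound and dividing through by $\|f\|_{p/2}^{1/2}$. The only cosmetic difference is that the paper phrases the second step as Cauchy--Schwarz and writes the intermediate inequality in the squared form $\big[\E_G (1_A\ast\mu_A)^{p/2}\big]^2 \leq \mu_G(A+A)\,\|1_A\ast\mu_A\|_p^p$ before taking $p$-th roots.
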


\begin{proof}
By Hölder's inequality we have
\begin{align*}
	\mu_G (A) 
	= \E_G 1_A \ast \mu_A 
	\leq \mu_G ( A + A )^{1-2/p} \| 1_A \ast \mu_A \|_{p/2},
\end{align*}
from which the first estimate follows upon rearranging
and taking square roots.
To obtain the second, apply Cauchy-Schwarz
and the first estimate in
\begin{align*}
	\big[ \E_G \, ( 1_A \ast \mu_A )^{p/2} \big]^2
	\leq \mu_G ( A + A ) \| 1_A \ast \mu_A \|_p^p 
	\leq K^{p/2} \| 1_A \ast \mu_A \|_{p/2}^{p/2} \| 1_A \ast \mu_A \|_p^p .
\end{align*}
The result follows upon taking {$p$-th} roots,
then dividing both sides
by ${\| 1_A \ast \mu_A \|_{p/2}^{1/2}}$.
\end{proof}

An important tool from~\cite{CLS} 
is a version of the Croot-Sisask lemma~\cite{CS}
that serves to smooth the convolution of two sets
by an iterated convolution factor.
The precise statement we need is a standard
consequence of~\cite[Theorem~6.1]{CLS};
an exposition of it by the author 
may be found in~\cite[Section~7]{meexpocs}.

\begin{lemma}[Croot-Sisask $L^p$-smoothing]
\label{thm:long_cs}
	Let $K, L \geq 1$, ${ \theta \in (0,K^{-1/2}] }$,
	$p \in 2 \N$, $\ell \in \N$ be parameters.
	Suppose that $A,S,T$ are subsets of $G$
	such that $|A+S| \leq K |A|$ and $|S+T| \leq L |S|$. 
	Then there exists a subset $X$ of $T$ of size 
	$|X| \geq (2L)^{- C p \ell^2 / \theta^2 } |T|$ such that
	\begin{align*}
		\| 1_A \ast \mu_S - 1_A \ast \mu_S \ast \lambda_X^{(\ell)} \|_p
		\leq \theta \| 1_A \ast \mu_S \|_{p/2}^{1/2}
	\end{align*}
	where $\lambda_X = \mu_X \ast \mu_{-X}$.
\end{lemma}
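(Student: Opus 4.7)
The plan is to reduce the lemma to the base form of the Croot--Sisask almost-periods lemma (the $\ell = 1$ case, with a single factor of $\lambda_X$) and then bootstrap by an $\ell$-step telescoping.

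First I would apply the base Croot--Sisask $L^p$ almost-periods lemma (i.e.\ \cite[Theorem~6.1]{CLS}, whose detailed exposition is given in~\cite[Section~7]{meexpocs}) to the smoothed indicator ${f = 1_A \ast \mu_S}$, with parameter $\eps = \theta/(2\ell)$. The doubling hypothesis $|A+S| \leq K|A|$ is what controls the relevant $L^p$ geometry of the translates of $f$, while $|S + T| \leq L |S|$ is what ensures that the produced set of almost-periods sits inside $T$ with substantial density. The output is a set $X \subseteq T$ of size at least $(2L)^{-Cp/\eps^2} |T|$ such that, for every $t \in X - X$,
\[
	\|\tau_t f - f\|_p \leq \eps \, \|f\|_{p/2}^{1/2}.
\]
The hypothesis $\theta \leq K^{-1/2}$ is precisely what keeps $\eps$ in the admissible range for this form of Croot--Sisask, in which the right-hand side involves $\|f\|_{p/2}^{1/2}$.

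Next, since $f \ast \lambda_X - f = \E_{x,y \in X}[\tau_{y-x} f - f]$, the triangle inequality converts the translation-invariance estimate on $X - X$ into the convolution bound
\[
	\|f \ast \lambda_X - f\|_p \leq \eps \, \|f\|_{p/2}^{1/2}.
\]
I would then bootstrap by telescoping
\[
	f - f \ast \lambda_X^{(\ell)} = \sum_{j=0}^{\ell-1} \big( f \ast \lambda_X^{(j)} - f \ast \lambda_X^{(j+1)} \big),
\]
and bound each summand in $L^p$ by convolving the previous estimate with $\lambda_X^{(j)}$ and invoking Young's inequality together with $\|\lambda_X^{(j)}\|_{L^1} = 1$. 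Summing $\ell$ such terms produces an overall factor of $\ell$, so the choice $\eps = \theta/(2\ell)$ gives the final error ${\theta \|f\|_{p/2}^{1/2}}$, while the size bound becomes $|X| \geq (2L)^{-Cp\ell^2/\theta^2} |T|$ as required.

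I do not expect a serious obstacle: the whole argument is a standard bootstrap once the base statement is in hand. The one slightly delicate point is invoking the precise version of Croot--Sisask whose right-hand side features $\|f\|_{p/2}^{1/2}$ in place of the more classical $\|1_A\|_p$, which is the form produced by the refined analysis of~\cite{CLS} and its exposition in~\cite{meexpocs}; inserting that inequality as a black box then closes the proof by the telescoping above.
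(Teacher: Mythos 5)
The paper does not prove this lemma in-line; it simply cites \cite[Theorem~6.1]{CLS} and the author's own exposition \cite[Section~7]{meexpocs}, calling the statement a ``standard consequence'' of the former. Your proposal is exactly that standard consequence: invoke the base $L^p$-almost-periodicity result at accuracy $\eps = \theta/(2\ell)$ to get $X \subset T$ with the right density and with $\|\tau_t f - f\|_p \leq \eps\|f\|_{p/2}^{1/2}$ for $t \in X-X$; average over $X\times X$ (using $f\ast\lambda_X - f = \E_{x,y\in X}[\tau_{y-x}f - f]$) to pass to a single convolution step; and then telescope $\ell$ times using $\|\lambda_X^{(j)}\|_{L^1}=1$ and Young's inequality, picking up a factor of $\ell$ that the choice of $\eps$ absorbs while the size bound degrades from $(2L)^{-Cp/\eps^2}$ to $(2L)^{-Cp\ell^2/\theta^2}$. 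All of that is correct and matches the intended derivation. The only place where you are implicitly trusting the black box is the precise form of the base lemma with $\|f\|_{p/2}^{1/2}$ (rather than $\|f\|_p$ or $\|1_A\|_p$) on the right-hand side, and the exact origin of the constraint $\theta\leq K^{-1/2}$; you flag this yourself, and it is indeed the point the two references are meant to settle, so this is not a gap in your argument so much as an honest acknowledgement of where the citation carries the weight.
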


As anticipated, our first step is to
produce a set of almost-periods of the
convolution of a small doubling set with itself.
Following~\cite{CLS}, this is done by first
smoothing this convolution by the iterated convolution
of a certain set $X$,
with the difference that this set 
is now localized to a Bourgain system,
which is taken to be a coset progression later on.
Via the Fourier transform,
any set annihilating the large spectrum of $X$
induces a set of almost-periods of the smoothed convolution,
and via the results of Section~\ref{sec:local},
we may choose this annihilator to be a 
large Bourgain system.
Here we make a small parenthesis on notation:
throughout this section,
$a \sim b$ stands for $b/2 \leq a \leq 2b$.

\begin{proposition}
\label{thm:long_almostp}
	Let $K \geq 1$ and $p \in 2 \N$ be parameters.
	Suppose that $\mB$ is a regular Bourgain system
	and $A$ is a subset of $G$
	such that $|A + A| \leq K|A|$
	and $B \subset 2A - 2A$.
	Then there exist $m \geq 1$ and Bourgain systems $\cmB,\wmB$
	such that $\wmB$ is $m$-controlled and
	\begin{align*}
		\cmB &= \mB_{ c / ( K d^2 m ) } \wedge \wmB_{c/K}, \\
		m &\leq C p K ( \log K )^3, 
	\end{align*}
	and for every $x \in \cB$,
	\begin{align*}
		\| 1_A \ast \mu_A - \tau_x 1_A \ast \mu_A \|_p 
		\leq \tfrac{1}{2} \| 1_A \ast \mu_A \|_p .
	\end{align*}
\end{proposition}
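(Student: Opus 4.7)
The plan is to follow the Croot--Sisask strategy from~\cite{CLS}, bridging the $L^p$-norm target to the $\ell^{p'}$ Fourier estimates of Proposition~\ref{thm:local_specann} through the Hausdorff--Young inequality. Write $f = 1_A \ast \mu_A$ throughout.

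First I would apply Lemma~\ref{thm:long_cs} with $S = A$, $T = B$, parameters $\theta = c_0 K^{-1/2}$, and an integer $\ell \asymp \log K$ to be fixed. Since $B \subset 2A - 2A$ and $|A+A| \leq K|A|$, Pl\"unnecke gives $|A+B| \leq |3A - 2A| \leq K^5 |A|$, so the hypothesis of Lemma~\ref{thm:long_cs} holds with $L = K^5$. This produces $X \subset B$ of relative density $\tau \geq K^{-CpK\ell^2}$ satisfying $\|f - f \ast \lambda_X^{(\ell)}\|_p \leq \theta \|f\|_{p/2}^{1/2}$, which becomes $\|f - f \ast \lambda_X^{(\ell)}\|_p \leq c_0 \|f\|_p$ after invoking Lemma~\ref{thm:long_lpnorms}. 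I would then apply Proposition~\ref{thm:local_specann} to $X$ inside $B$ with $\eta = 1/2$ and $\nu = c_1 / K$, obtaining a regular Bourgain system of the required form $\cmB = \mB_{c/(Kd^2 m)} \wedge \wmB_{c/K}$, with $\wmB$ being $m$-controlled and $m \leq C\eta^{-2} \ell(\tau) \leq C' p K (\log K)^3$, the last inequality coming from $\log L \asymp \log K$ and the choice of $\ell$.

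For $x \in \cB$ and $g \coloneqq f \ast \lambda_X^{(\ell)}$, the triangle inequality gives $\|\tau_x f - f\|_p \leq 2 \|f - g\|_p + \|\tau_x g - g\|_p \leq 2 c_0 \|f\|_p + \|\tau_x g - g\|_p$, so the middle term is the heart of the argument. I would invoke Hausdorff--Young (valid for $p \geq 2$, with $p' = p/(p-1)$) on the Fourier identity $\wh{\tau_x g - g}(\gamma) = (\gamma(x) - 1)\wh{f}(\gamma) |\wh{\mu_X}(\gamma)|^{2\ell}$ and split over $\Spec_{1/2}(\mu_X)$: the annihilation gives $|\gamma(x) - 1| \leq \nu$ on the inside, while $|\wh{\mu_X}|^{2\ell} \leq 2^{-2\ell} \leq c_0/K$ on the outside when $\ell$ is chosen sufficiently large. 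This yields $\|\tau_x g - g\|_p \leq (\nu + 2 c_0/K) \|\wh{f}\|_{\ell^{p'}}$.

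The main obstacle is that the Hausdorff--Young $\ell^{p'}$ Fourier norm is not directly controlled by $\|f\|_p$ for $p \geq 2$. I would resolve this by exploiting the explicit structure $\wh{f} = \wh{1_A}^{\,2}/\mu_G(A)$: the pointwise bound $|\wh{1_A}| \leq \mu_G(A)$ together with Parseval $\sum_\gamma |\wh{1_A}|^2 = \mu_G(A)$ yields $\|\wh{f}\|_{\ell^{p'}} \leq \mu_G(A)^{1/p}$, while the H\"older lower bound $\|f\|_p \geq \mu_G(A)/\mu_G(A+A)^{1 - 1/p}$ on the support $A+A$ then gives $\|\wh{f}\|_{\ell^{p'}} \leq K \|f\|_p$. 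Assembling everything, $\|\tau_x f - f\|_p \leq (4 c_0 + c_1) \|f\|_p$, and choosing $c_0, c_1$ sufficiently small yields $\|\tau_x f - f\|_p \leq \tfrac{1}{2} \|f\|_p$ as desired.
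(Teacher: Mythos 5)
Your proof is correct and arrives at the same parameter choices as the paper ($\theta \asymp K^{-1/2}$, $\nu \asymp K^{-1}$, $\ell \asymp \log K$), but the estimate of the smoothed translate is carried out along a genuinely different route. Writing $f = 1_A\ast\mu_A$ and $g = f\ast\lambda_X^{(\ell)}$, the paper inserts the indicator of $(A+A)\cup(A+A-x)$ (the support of $(I-\tau_x)f$) to reduce the relevant $L^p$ norm to $\mu_G(A+A)^{1/p}\,\|(I-\tau_x)g\|_\infty$, and then controls the $L^\infty$ norm via the $\ell^1$ Fourier bound $\sum_\gamma |\wone_A||\wmu_A| = 1$. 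You instead bypass the support-indicator device and apply Hausdorff--Young, $\|h\|_p \leq \|\wh{h}\|_{\ell^{p'}}$ for $p \geq 2$ with $p'$ the conjugate exponent, directly to $(I-\tau_x)g$. You also correctly identify the ensuing obstruction --- $\|\wf\|_{\ell^{p'}}$ is not in general controlled by $\|f\|_p$ when $p > 2$ --- and resolve it by exploiting the factorisation $|\wf| = |\wone_A|^2/\mu_G(A)$ together with the pointwise bound $|\wone_A| \leq \mu_G(A)$ and Parseval, giving $\|\wf\|_{\ell^{p'}} \leq \mu_G(A)^{1/p}$; the subsequent H\"older estimate $\|f\|_p \geq \mu_G(A)/\mu_G(A+A)^{1-1/p}$ that converts this to $K\|f\|_p$ is in effect a variant of Lemma~\ref{thm:long_lpnorms}, rederived on the fly. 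Both approaches produce equivalent bounds on $m$; yours trades the ad hoc support restriction for a slightly more structural Fourier-analytic calculation, which is a reasonable stylistic simplification.
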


\begin{proof}
First observe that, by the Plünnecke-Ruzsa-Petridis
inequality~\cite{petridis},
\begin{align*}
	| A + B | \leq | 3A - 2A | \leq K^5 |A|,
\end{align*}
and therefore we may apply Lemma~\ref{thm:long_cs} with 
$(S,T) = (A,B)$ and $L = K^5$, 
for parameters $\theta$ and $\ell$ to be determined later.
This yields a subset $X$ of $B$ of relative density $\tau$ such that
\begin{gather}
	\label{eq:long_Xdensity}
	\tau \geq \exp\big[ -C p \ell^2 \theta^{-2} \log K \big], \\
	\label{eq:long_smoothing}
	\| 1_A \ast \mu_A - 1_A \ast \mu_A \ast \lambda_X^{(\ell)} \|_p
	\leq \theta \| 1_A \ast \mu_A \|_{p/2}^{1/2}.
\end{gather}

We write $I$ for the identity operator on functions,
and given $x \in G$ we define the function 
$\wh{x} : \dual \rightarrow G$ which maps 
$\gamma$ to $\gamma(x)$.
Consider now an arbitrary element $x$ of $G$,
then by the triangle inequality and \eqref{eq:long_smoothing},
we have
\begin{align*}
	\| ( I - \tau_x ) 1_A \ast \mu_A \|_p
	&\leq\  \| ( I - \tau_x ) ( 1_A \ast \mu_A - 1_A \ast \mu_A \ast \lambda_X^{(\ell)} ) \|_p \\
	& \mspace{80.0mu} + \| 1_{(A + A) \cup (A + A-x)} \cdot 
	(I - \tau_x ) 1_A \ast \mu_A \ast \lambda_X^{(\ell)} \|_p \\
	&\leq\ 2\theta \| 1_A \ast \mu_A \|_{p/2}^{1/2}
	+ 2\mu_G(A + A)^{1/p} \| (I - \tau_x) 1_A \ast \mu_A \ast \lambda_X^{(\ell)} \|_\infty .
\end{align*}
By Parseval, we have further
\begin{align}
\label{eq:long_almostpparseval}
	\| ( I - \tau_x ) 1_A \ast \mu_A \|_p
	\leq 2\theta \| 1_A \ast \mu_A \|_{p/2}^{1/2}
	+ 2 \mu_G(A + A)^{1/p} \ssum_{\dual} |\wone_A| |\wmu_A| |\wmu_X|^{2\ell} |1 - \wh{x}|.
\end{align}

Invoking now Proposition~\ref{thm:local_specann} 
with a parameter $\nu \in (0,1]$, 
and recalling \eqref{eq:long_Xdensity},
we infer that $\Spec_{1/2}(\mu_X)$ is $\nu$-annihilated
by $\cmB = \mB_{c\nu/d^2 m} \wedge \wmB_\nu$,
where $\wmB$ is an $m$-controlled Bourgain system with 
$m \leq C p \ell^2 \theta^{-2} \log K$.
From now on we restrict to $x \in \cB$, so that, 
by considering separately the summation over 
$\Spec_{1/2}(\mu_X)$ in \eqref{eq:long_almostpparseval},
we obtain
\begin{align*}
	\| ( I - \tau_x ) 1_A \ast \mu_A \|_p
	&\leq  2\theta \,\| 1_A \ast \mu_A \|_{p/2}^{1/2}
	+ 2(\nu + 2^{1 - 2\ell}) \, \mu_G(A + A)^{1/p} \ssum_{\dual} |\wone_A| |\wmu_A| .
\end{align*}
By Parseval we know that
$\sum_{\dual} |\wone_A| |\wmu_A| = 1$.
Applying finally Lemma~\ref{thm:long_lpnorms}, we obtain 
\begin{align*}
	\| ( I - \tau_x ) 1_A \ast \mu_A \|_p
	&\leq \big( 2\theta + 2\nu K^{1/2} + 2^{2-2\ell} K^{1/2} \big) 
	\, \| 1_A \ast \mu_A \|_{p/2}^{1/2} \\
	&\leq \big( 2\theta + 2\nu K^{1/2} + 2^{2-2\ell} K^{1/2} \big) 
	K^{1/2} \| 1_A \ast \mu_A \|_p.
\end{align*}
Choosing $\theta = K^{-1/2}/8$, 
$\nu = K^{-1} / 16$
and $\ell \sim C\log K$, 
we obtain the desired $L^p$-estimate,
and the bound on $m$ follows by inserting
the value of these parameters.
\end{proof}

Secondly, we need the following packing
argument which may be extracted from 
the computations of~\cite{CLS},
but whose proof we include for completeness.
In practice we specialize $f$ below to
$1_A \ast \mu_A$ which has $A+A$ as support.

\begin{lemma}
\label{thm:long_packing}
	Let $p \geq 2$ be a parameter.
	Suppose that $f : G \rightarrow \C$ and $R \subset G$
	are such that, for all $t \in R$,
	\begin{align*}
		\| (I - \tau_t) f \|_p 
		\leq \tfrac{1}{2} \| f \|_p .
	\end{align*}
	Then for every subset $T$ of $R$ of size $|T| < 2^p$,
	there exists a translate $x \in G$ such that $x + T \subset \Supp(f)$.
\end{lemma}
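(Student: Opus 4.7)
The plan is a short contradiction-and-pigeonhole argument. I set $U = \Supp(f)$ and $E = G \setminus U$, and I will show that if no translate $x + T$ is entirely contained in $U$, then the assumed $L^p$-closeness of $f$ to each of its translates $\tau_t f$ forces $|T| \geq 2^p$, contradicting the hypothesis on $|T|$.

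The first step is a trivial but pivotal pointwise observation: whenever $x + t \in E$, the value $f(x+t)$ vanishes and hence $|(I - \tau_t)f(x)| = |f(x)|$. Consequently,
\begin{align*}
	\bigl| (I - \tau_t) f(x) \bigr|^p
	\geq |f(x)|^p \, 1_E(x+t)
	\quad \text{for every } x \in G \text{ and } t \in T.
\end{align*}
Integrating over $G$, invoking the hypothesis $\|(I - \tau_t) f\|_p \leq \tfrac{1}{2} \|f\|_p$, and summing over $t \in T$ yields
\begin{align*}
	|T| \cdot 2^{-p} \|f\|_p^p
	\geq \sum_{t \in T} \|(I - \tau_t)f\|_p^p
	\geq \E_{x \in G} |f(x)|^p \sum_{t \in T} 1_E(x+t).
\end{align*}

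For the second step, suppose for contradiction that no $x \in G$ satisfies $x + T \subset U$. Then for every $x \in G$ there is at least one $t \in T$ with $x + t \in E$, so $\sum_{t \in T} 1_E(x+t) \geq 1$ pointwise on $G$. Inserting this into the previous display gives
\begin{align*}
	|T| \cdot 2^{-p} \|f\|_p^p \geq \|f\|_p^p,
\end{align*}
and since one may of course assume $\|f\|_p > 0$ (the conclusion being vacuous when $T = \varnothing$ and the hypothesis being unsatisfiable when $f \equiv 0$ and $T \ne \varnothing$), this forces $|T| \geq 2^p$, contradicting the assumption $|T| < 2^p$.

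There is no real obstacle to this argument; it is an entirely elementary averaging. The only substantive point is to recognize that the sharp pointwise inequality in the first step is exactly what can be summed against the almost-periodicity hypothesis, and that the assumption $|T| < 2^p$ corresponds precisely to the factor $(1/2)^p$ arising from raising the hypothesis to the $p$-th power.
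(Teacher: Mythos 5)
Your argument is correct and is essentially identical to the paper's: both bound $\sum_{t \in T}\|f-\tau_t f\|_p^p$ above by $|T|\,2^{-p}\|f\|_p^p$ via the hypothesis, and below by $\|f\|_p^p$ using the pointwise observation that for each $x$ some $t\in T$ has $f(x+t)=0$, then conclude by contradiction. One small inaccuracy in your parenthetical: when $f\equiv 0$ the hypothesis $\|(I-\tau_t)f\|_p\leq\tfrac12\|f\|_p$ is in fact satisfied (it reads $0\leq 0$), so the lemma as stated is strictly speaking vacuously false for $f\equiv 0$, $T\neq\varnothing$ --- but this degenerate case is also elided in the paper and is irrelevant to the intended application with $f=1_A*\mu_A$.
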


\begin{proof}
Given a subset $T$ of $R$, consider the quantity 
\begin{align*}
	I \coloneqq \sum_{t \in T} 
	\| f - \tau_t f \|_p^p,
\end{align*}
so that by the assumptions of the lemma, we have at once 
$I \leq |T| \cdot 2^{-p} \| f \|_p^p$.

Now assume for contradiction that for every $x \in G$,
the translate $x + T$ is not contained in $\Supp(f)$;
then for every $x \in G$ we may find an element $t \in T$
such that $f ( x + t ) = 0$.
Exchanging summations, this yields the lower bound
\begin{align*}
	I = \E_G \ssum_{t \in T} | f - \tau_t f |^p
	\geq \E_G |f|^p.
\end{align*}
Combining both bounds on $I$, we obtain
\begin{align*}
	\| f \|_p^p 
	\leq |T| 2^{-p} \| f \|_p^p .
\end{align*}
We obtain a contradiction if $|T| < 2^p$,
and therefore we find a translated copy 
of $Y$ in the support of $f$ in that case.
\end{proof}

Last, we need an analog for Bourgain systems
in abelian groups of the well-known fact,
used in~\cite{CLS}, that 
Bohr sets of $\Z_N$ of radius $\delta$ and 
dimension $d$ contain arithmetic 
progressions of length $\delta N^d$.

\begin{lemma}
\label{thm:long_apsginbsyst}
	Suppose that $\mB$ is a Bourgain system 
	of dimension $d$ and $h \geq d$,
	and assume that $|B| \geq 2^{6h}$.
	Then there exists a subset $T$ of $B$, 
	which is either a proper arithmetic progression
	or a subgroup, of size
	$\tfrac{1}{4} |B|^{1/4h} \leq |T| \leq |B|^{1/2h}$.
\end{lemma}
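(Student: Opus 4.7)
The plan is to use the covering property to descend to a dilate $B_{1/M}$ that contains nonzero elements, and then build either a proper arithmetic progression or a subgroup from a well-chosen generator there.

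First I would set $M = \lfloor |B|^{1/(2h)} \rfloor$ and iterate the covering bound $|B_{2\rho}| \leq 2^d |B_\rho|$ from Definition~\ref{thm:bsyst_def}, obtaining $|B_{1/M}| \geq M^{-d}|B| \geq M^{-h}|B| \geq |B|^{1/2}$. The hypothesis $|B| \geq 2^{6h}$ makes this at least $2^{3h} \geq 2$, so one can pick a nonzero element $x \in B_{1/M}$, chosen of maximum order $n$ in $G$. By symmetry and additive closure, $jx \in B_{|j|/M} \subset B$ for every $|j| \leq M$.

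I would next split into cases based on $n$. If $n > M$, then $0, x, 2x, \dots, (M{-}1)x$ are pairwise distinct and form a proper arithmetic progression in $B$ of size $M$, which sits in the required range $\tfrac{1}{4}|B|^{1/4h} \leq M \leq |B|^{1/2h}$ thanks to $|B| \geq 2^{6h}$. If instead $\tfrac{1}{4}|B|^{1/4h} \leq n \leq M$, the cyclic subgroup $\langle x \rangle$ is entirely contained in $B$ (write each element as $jx$ with $|j| \leq n/2 \leq M/2$, so that it lies in $B_{1/2} \subset B$), and its size falls in the same window.

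The main obstacle is the remaining regime where even the maximum order $n$ is strictly less than $r := \tfrac{1}{4}|B|^{1/4h}$, since then a single generator is not enough. To handle this I would extend greedily: while the current subgroup $H_k := \langle y_1, \dots, y_k \rangle$ satisfies $|H_k| < r$, pick $y_{k+1} \in B_{1/M} \setminus H_k$, which exists because $|B_{1/M}| \geq |B|^{1/2}$ comfortably exceeds $r$. Since $G$ is abelian, adjoining a new generator at least doubles the subgroup, so after $O(\log r)$ steps we reach $r \leq |H_k| < r \cdot n < r^2 \leq |B|^{1/2h}$. To verify $H_k \subset B$, I would represent each of its elements as $\sum a_i y_i$ with canonical coefficients $|a_i| \leq n/2$, giving $\sum |a_i| \leq kn/2$; the arithmetic inequality $kn/2 \leq M$ reduces to $r \log_2 r \leq M$, which follows from a short computation with $|B| \geq 2^{6h}$. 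This produces a subgroup of the required size and concludes the proof.
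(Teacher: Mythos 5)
Your proof is correct and follows essentially the same strategy as the paper's: pass to a small dilate (you use $B_{1/M}$ with $M \approx |B|^{1/2h}$; the paper uses $B_\eta$ with $\eta \asymp |B|^{-1/2h}$, the same scale), split on the maximal order $n$ of its nonzero elements, and produce either a proper arithmetic progression from a high-order element or a subgroup from several low-order generators, using additive closure $jx \in B_{|j|/M}$ to keep everything inside $B$. One small slip: Lemma~\ref{thm:bsyst_dilation} gives $|B_{1/M}| \geq (2M)^{-d}|B|$, not $M^{-d}|B|$, so the clean bound $|B_{1/M}|\geq |B|^{1/2}$ should really be $|B_{1/M}|\geq 2^{-h}|B|^{1/2}$; this still comfortably exceeds $r$ under $|B|\geq 2^{6h}$, so nothing breaks. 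Your treatment of the subgroup case is in fact \emph{more} careful than the paper's: the paper takes $N-1$ arbitrary distinct nonzero elements of $B_\eta$ (with $N \asymp |B|^{1/4h}$) and asserts the subgroup they generate has size at most $N^2$, but read literally this can fail badly (in $\F_2^k$, taking $N-1$ independent generators yields a subgroup of size $2^{N-1}$); the intended justification is exactly the greedy stopping argument you wrote out, adjoining generators until the size first reaches $r$ and using the last step to get the $\leq r\cdot n < r^2$ upper bound. So the route is the same, but your write-up supplies a step the paper leaves terse to the point of being incomplete.
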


\begin{proof}
Let $ \eta = 2 |B|^{-1/2h} \in (0,2^{-2}]$ 
so that, by Lemma~\ref{thm:bsyst_dilation}, we have
\begin{align*}
	|B_\eta| 
	\geq \exp\big[ \log |B| - d \log \tfrac{2}{\eta} \big] 
	\geq |B|^{1/2}.
\end{align*}
Let $N = \lfloor \eta^{-1/2} \rfloor$, so that 
we have a sumset containment
\begin{align}
\label{eq:long_Bcontainment}
	N^2 B_\eta \subset B_{N^2 \eta} \subset B .
\end{align}
Since $\eta^{-1/2} \geq 2$, we have also
$ \tfrac{1}{2} \eta^{-1/2} \leq N \leq \eta^{-1/2}$.

We are now in one of two cases.
In the first, there exists an element 
$x$ in $B_\eta$ of order $N$,
thus the arithmetic progression 
$T = [0,N-1]_\Z \cdot x$ is proper
and, by \eqref{eq:long_Bcontainment}, contained in $B$.
Since $|T| = N$, we have also
${\tfrac{1}{4} |B|^{1/4h} \leq |T| \leq |B|^{1/4h}}$.

In the second case, every element of $B_\eta$ has order at most $N$.
Since $|B_\eta| \geq |B|^{1/2} \geq N$,
we may pick $N-1$ distinct nonzero elements 
$x_1,\dots,x_{N-1} \in B_\eta$ and consider 
the subgroup $T$ they generate, viz.
\begin{align*}
	T 
	= \langle x_1, \dots , x_{N-1} \rangle_\Z
	= [0,N-1]_\Z \cdot x_1 + \dotsb + [0,N-1]_\Z \cdot x_{N-1}.
\end{align*}
By \eqref{eq:long_Bcontainment} it follows again 
that $T$ is contained in $B$,
and the size of $T$ satisfies
\begin{align*}
	\tfrac{1}{4} |B|^{1/4h} \leq N 
	\leq |T| \leq N^2 \leq |B|^{1/2h}.
\end{align*} 
\end{proof}

We are now ready to combine the previous propositions
into a proof of Theorem~\ref{thm:me_2xtolongaps}.
\smallskip

\textit{Proof of Theorem~\ref{thm:me_2xtolongaps}.}
By Proposition~\ref{thm:br_cont}, 
we may find a $d$-dimensional coset 
progression $M \subset 2A - 2A$ such that
\begin{align}
	\label{eq:long_cprogbounds}
	d \leq (\log K)^{O(1)}
	\quad\text{and}\quad
	|M| \geq \exp\big[ - (\log K)^{O(1)} \big] \cdot |A|.
\end{align}
Up to dilating $M$ by a constant factor,
which preserves the above bounds by 
Lemma~\ref{thm:bsyst_dilation},
we may assume via Lemma~\ref{thm:reg_regularizing}
that $M$ induces a regular Bourgain system $\mM$.
By Lemma~\ref{thm:cprog_systdim},
that system also satisfies the dimension 
bound~\eqref{eq:long_cprogbounds}.

Applying now Proposition~\ref{thm:long_almostp} 
with $\mB = \mM$ and a parameter
$p \in 2\N$ to be determined later,
we obtain Bourgain systems $\cmB,\wmB$ such that
\begin{align}
	\label{eq:long_intersec}
	\cmB &= \mM_{ (1/2dpK)^{O(1)} } \wedge \wmB_{c/K}, \\
	\label{eq:long_intersecdim}
	\wt{d} &\leq C p K (\log K)^3, \\
	\label{eq:long_intersecdens}
	\wt{b} &\geq \exp\big[ - C p K (\log pK ) ( \log K )^3  \big],
\end{align}
where we have unfolded Definition~\ref{thm:local_bsystcontrol}, 
and such that
\begin{align}
\label{eq:long_packingineq}
	\| (I - \tau_x) 1_A \ast \mu_A \| 
	\leq \tfrac{1}{2} \| 1_A \ast \mu_A \|_p
	\quad\text{for all}\quad
	x \in \cB.
\end{align}
Applying Lemma~\ref{thm:bsyst_intersection} 
to the intersection~\eqref{eq:long_intersec}, 
and considering~\eqref{eq:long_cprogbounds}
and \eqref{eq:long_intersecdim},
we obtain
\begin{align*}
	\cj{d} \ll (\log K)^{O(1)} + p K (\log K)^3 \ll p K ( \log K )^3
\end{align*}
and we let $h = C p K (\log K)^3 \geq \cj{d}$.
By Lemmas~\ref{thm:bsyst_dilation}
and~\ref{thm:bsyst_intersection},
we also obtain
\begin{align*}
	\mu_G(\cB) \geq \exp\big[ - C d (\log dpK) \big] \mu_G(M) 
	\cdot \exp\big[ - C\wt{d} \log K \big] \wt{b}
\end{align*}
and therefore, by~\eqref{eq:long_cprogbounds},
\eqref{eq:long_intersecdim}
and~\eqref{eq:long_intersecdens},
\begin{align*}
	|\cB| 
	\geq \exp\big[ - C p K ( \log pK ) (\log K)^3 \big] \cdot |A|.
\end{align*}

Both the conditions $|\cB| \geq |A|^{1/2}$ 
and $|\cB| \geq 2^{6h}$ 
are satisfied provided
\begin{align}
	\label{eq:long_hardcondition}
	p K (\log pK) (\log K)^3 \leq c \log |A|. 
\end{align}
Considering that $\cB \subset M \subset 2A - 2A$, 
we thus have a rough estimate 
$|A|^{1/2} \leq |\cB| \leq |A|^4$.
By Lemma~\ref{thm:long_apsginbsyst},
we may therefore find a subset $T$ of $\cB$, 
which is either a proper arithmetic progression
or a subgroup, of size bounded by
\begin{align*}
	\tfrac{1}{4} |A|^{1/8h} 
	\leq \tfrac{1}{4} |\cB|^{1/4h}
	\leq |T| 
	\leq |\cB|^{1/2h}
	\leq |A|^{2/h}.
\end{align*}
Recalling our choice $h = C p K (\log K)^3$
and \eqref{eq:long_hardcondition}, 
this shows that
\begin{align*}
	|T| = \exp\bigg[ \Theta\Big( \frac{\log |A|}{pK (\log K)^3} \Big)\bigg].
\end{align*}
The condition $|T| < 2^p$ is therefore 
satisfied if we choose
\begin{align*}
	p \sim C \bigg( \frac{\log |A|}{K (\log K)^3} \bigg)^{1/2}.
\end{align*} 
It remains to check the conditions $p \geq 2$
and~\eqref{eq:long_hardcondition}; 
these are seen to be satisfied for
\begin{align*}
	K \leq \frac{c\log |A|}{(\log\log |A|)^5}
\end{align*}
after a tedious, yet elementary computation.
This yields the final size estimate
\begin{align*}
	|T| = \exp\bigg[ \Theta\Big( \frac{\log |A|}{K (\log K)^3} \Big)^{1/2} \bigg]
\end{align*}
and since we verified the conditions $|T| < 2^p$
and~\eqref{eq:long_packingineq}, 
an application of Lemma~\ref{thm:long_packing}
with $f = 1_A \ast \mu_A$ and $R = \cB$ concludes the proof.
\qed

\section{Remarks}
\label{sec:remarks}

In this section we collect together 
certain remarks of expository 
or exploratory nature which have
not found their way into the main text.

We first wish to explain in more detail how 
Theorem~\ref{thm:san_2xto3apsZ} follows
from the results of the literature.
Consider a set of integers $A$ of doubling $K$,
then for the purpose of finding arithmetic progressions in $A$,
we may instead assume that $A$ is a 
subset of a cyclic group of odd order
of density $\gg K^{-4}$ and doubling $K$,
via a partial Freiman isomorphy~\cite{ruzsamodelling}.
Applying~\cite[Proposition~6.1]{sanders3aps} to $A$,
one obtains a regular Bohr set of dimension
$d \ll K \log K$ and density $b \geq \exp[ - C K (\log K)^2]$,
on which a certain translate of $A$ has density $\gg K^{-1}$.
In that setting, Proposition~\ref{thm:roth_localroth}
of this article is just~\cite[Theorem~1.1]{sandersroth},
initializing the iterative argument from there 
on a Bohr set instead of the whole group;
there is no need to consider Bourgain systems
or $2$-torsion.
Proposition~\ref{thm:roth_localroth} thus specialized
shows that $A$ contains at least 
$\exp[ -C K (\log K)^8] \cdot |A|^2$
three-term arithmetic progressions,
and therefore at least one nontrivial
progression for $K=|A + A|/|A|$ 
in the range specified by 
Theorem~\ref{thm:san_2xto3apsZ}.


Secondly, we remark that the modelling argument
used in Sections~\ref{sec:3aps} and~\ref{sec:long}
could likely be adapted to other problems on dense sets,
such as solving translation-invariant equations
or finding long arithmetic progressions
in $A + A + A$, to obtain
a generalization of these results
to the case of sets of small doubling 
in an arbitrary abelian group. 
However, it is not clear to the author
whether it is worth pursuing such generalizations,
given the current lack of combinatorial applications 
of the kind of Corollary~\ref{thm:me_restrictedss}
for results of this type.

\bibliographystyle{amsplain}
\bibliography{smalldoubling_arxiv2}

\bigskip

\textsc{\footnotesize Département de mathématiques et de statistique,
Université de Montréal,
CP 6128 succ. Centre-Ville,
Montréal QC H3C 3J7, Canada
}

\textit{\small Email address: }\texttt{\small henriot@dms.umontreal.ca}

\end{document}